\theoremstyle{plain}
\newcommand{\bC}{{\mathbb C}}
\newcommand{\bN}{{\mathbb N}}
\newcommand{\bQ}{{\mathbb Q}}
\newcommand{\bR}{{\mathbb R}}
\newcommand{\bZ}{{\mathbb Z}}
\newcommand{\cF}{{\mathscr F}}
\newcommand{\cK}{{\mathscr K}}
\newcommand{\cL}{{\mathscr L}}
\newcommand{\caO}{{\mathcal O}}
\newcommand{\fP}{{\mathfrak P}}
\newcommand{\fp}{{\mathfrak p}}
\newcommand{\fq}{{\mathfrak q}}
\DeclareMathOperator{\Hom}{Hom}
\DeclareMathOperator{\Aut}{Aut}
\DeclareMathOperator{\Spec}{Spec}
\DeclareMathOperator{\BS}{BS}
\DeclareMathOperator{\Gal}{G}
\newcommand{\ab}{{\rm ab}}
\newcommand{\Syl}{{\rm Syl}}
\DeclareMathOperator{\Regul}{Reg}
\newtheorem*{rep@theorem}{\rep@title}
\newcommand{\newreptheorem}[2]{%
\newenvironment{rep#1}[1]{%
 \def\rep@title{#2 \ref{##1}}%
 \begin{rep@theorem}}%
 {\end{rep@theorem}}}
\newtheorem{thm}{Theorem}[section]
\newtheorem{prop}[thm]{Proposition}
\newtheorem{Prop}[thm]{Proposition}
\newtheorem{cor}[thm]{Corollary}
\newtheorem{lm}[thm]{Lemma}
\theoremstyle{definition}
\newtheorem{Def}[thm]{Definition}
\newtheorem{rem}[thm]{Remark}
\newtheorem{rems}[thm]{Remarks}
\newenvironment{pro*}[1][Proof]{{\it{#1:}} }{}
\newcommand\rar{ \rightarrow }
\newcommand\tar{ \twoheadrightarrow }
\newcommand\har{ \hookrightarrow }
\newcommand\Rar{ \Rightarrow }
\newcommand\LRar{ \Leftrightarrow }
\newcommand\dirlim{\mathop{\underrightarrow{\lim} }}
\newcommand\prolim{\mathop{\underleftarrow{\lim} }}
\newcommand{\sm}{{\,\smallsetminus\,}}
\newcommand\N{{\rm N}}
\newcommand\Cl{\rm Cl}
\newcommand\rank{\rm rk}
\newcommand\tame{\mathop{ \rm tm}}
\newcommand\bap{\bar{\fp}}
\newcommand\baq{\bar{\fq}}
\newcommand\Ind{\mathop{ \rm Ind}}
\DeclareMathOperator{\vol}{vol}
\DeclareMathOperator{\rel}{rel}
\newcounter{absatzcounter}[section]
\numberwithin{equation}{section}
\begin{document}

\title[Reconstruction of decomposition subgroups]{Reconstructing decomposition subgroups in arithmetic fundamental groups using regulators}
\author{A. Ivanov}
\email{ivanov@ma.tum.de}
\address{Zentrum Mathematik - M11, Technische Universit\"at M\"unchen, Boltzmannstr. 3, 85748 Garching bei M\"unchen, Germany}
\keywords{arithmetic curves, anabelian geometry, Dedekind zeta function, Brauer-Siegel theorem, Tsfasman-Vladut theorem, Iwasawa theory}
\begin{abstract}
Our main goal in the present article is to explain how one can reconstruct the decomposition subgroups and norms of points on an arithmetic curve inside its fundamental group if the following data are given: the fundamental group, a part of the cyclotomic character and the family of the regulators of the fields corresponding to the generic points of all \'{e}tale covers of the given curve. The approach is inspired by that of Tamagawa for curves over finite fields but uses Tsfasman-Vl\u{a}du\c{t} theorem instead of Lefschetz trace formula. To the authors' knowledge, this is a new technique in the anabelian geometry of arithmetic curves. It is conditional and depends on still unknown properties of arithmetic fundamental groups. We also give a new approach via Iwasawa theory to the local correspondence at the boundary.

MSC classification: 14G32 (primary), 11R42, 11R29 (secondary)
\end{abstract}

\maketitle

\section{Introduction}

Let $K$ be a number field, $S$ a finite set of primes of $K$ containing all archimedean primes and $K_S/K$ the maximal extension of $K$ unramified outside $S$. Let $\caO_{K,S}$ be the ring of $S$-integers of $K$. Then the \'{e}tale fundamental group of $\Spec\caO_{K,S}$ is equal to the Galois group $\Gal_{K,S}$ of $K_S/K$. We discuss in this article, how one can deduce anabelian information on $\Spec\caO_{K,S}$ from $\Gal_{K,S}$ and (essentially) the family of residues at $s = 0$ of zeta functions of all intermediate fields $K_S/L/K$. Our goal, is to establish a \emph{local correspondence on the curve} $\Spec \caO_{K,S}$: i.e., given pairs $(K_i,S_i)$ for $i=1,2$ and an isomorphism $\sigma \colon \Gal_{K_1,S_1} \stackrel{\sim}{\rar} \Gal_{K_2,S_2}$, preserving the product 'class number times regulator' for all finite subextensions, let for any finite subextension $K_{1,S_1}/L_1/K_1$, denote the corresponding extension of $K_2$ via $\sigma$ by $L_2$. For a scheme $X$, let $|X|$ denote the set of closed points of $X$. Under (unknown) additional requirements on $\Gal_{K_i,S_i}$, we will show the existence of a compatible family of bijections

\[ \sigma_{\ast,L_1} \colon |\Spec \caO_{L_1,S_1}|  \stackrel{\sim}{\rar} |\Spec \caO_{L_2,S_2}| \] 

\noindent on closed points, for any finite subextension $K_{1,S_1}/L_1/K_1$, characterized by $\sigma(D_{\bap}) = D_{\sigma_{\ast}(\bap)}$, where $D_{\bap}$ is the decomposition subgroup of the point $\bap$ and $\sigma_{\ast}$ is the inverse limit of the maps $\sigma_{\ast,L_1}$. Moreover, the maps $\sigma_{\ast,L_1}$ preserve absolute norms of primes.

Let us use the following notation: if (x),(y) are some sets of invariants of $K,S$ (like, for example, the position of the decomposition groups of primes inside $\Gal_{K,S}$), then (x) $\rightsquigarrow$ (y) resp. (x) $\leftrightsquigarrow$ (y) will have the following meaning: if the data in (x) are known, then we can deduce the data in (y) from them resp. the knowledge of (x) and (y) is equivalent. In particular, (x) $\rightsquigarrow$ (y) implies that if two pairs $(K_i,S_i), i = 1,2$ are given with $\Gal_{K_1,S_1} \cong \Gal_{K_2,S_2}$ and such that the data in (x) coincide for $i = 1,2$, then also the data in (y) are coincide.

The goal of the first part of the article is to establish $\sigma_{\ast}$ on the primes lying \emph{on the boundary}, i.e., in $S$. Here we enforce the main result from \cite{Iv}, using methods from Iwasawa theory and the Greenberg conjecture (\cite{Gre} Conjecture 3.5). Here is our result in this direction. Let $S_{\infty} = S_{\infty}(K)$ resp. $S_p = S_p(K)$ denote the set of archimedean primes of $K$ resp. the set of primes of $K$ lying over $p$, and set $S^f := S \sm S_{\infty}$.

\begin{thm}\label{thm:main_boundary}
Assume the Greenberg conjecture holds for all involved number fields and for the fixed prime $p$. Let $K$ be a number field, $S \supseteq S_p \cup S_{\infty}$. Then 

\[ (G_{K,S},p) \rightsquigarrow (D_{\bap} \har G_{K,S})_{\bap \in S^f \sm S_p}. \]
\end{thm}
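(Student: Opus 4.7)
My plan is to build on the reconstruction already carried out in \cite{Iv}, replacing the arithmetic hypothesis imposed there by the Greenberg conjecture. The point is that the latter furnishes, via Iwasawa theory, precisely the structural rigidity on certain pro-$p$ Galois modules that is needed to pin down the decomposition subgroups at primes in $S^f \setminus S_p$.

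First, I would use that the pair $(G_{K,S},p)$ already recovers the $p$-cyclotomic character (as in \cite{Iv}) and hence allows to identify inside $G_{K,S}$ the cyclotomic $\bZ_p$-extension $K_\cyc/K$ together with $\Gamma := \Gal(K_\cyc/K)$. A standard normalizer argument then reduces the problem to reconstructing, for each $\bap \in S^f \setminus S_p$, the embedding $D_{\bap} \hookrightarrow G_{K_\cyc, S \cup S_p}$; the point is that the pro-$p$ decomposition subgroup at such a $\bap$ is $\Gamma$-invariant up to conjugacy, so a recovery upstairs descends to a recovery downstairs.

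Second, I would work with the Iwasawa module $X := \Gal(M/K_\cyc)$, where $M$ is the maximal abelian pro-$p$ extension of $K_\cyc$ unramified outside $S$, and try to isolate inside $X$ the local contribution at the primes above each $\bap \in S^f \setminus S_p$. This is where Greenberg's conjecture enters: by forcing the $\lambda$- and $\mu$-invariants of the relevant plus-part of $X$ to vanish at each totally real intermediate field, the conjecture ensures that the local $\Lambda$-components of $X$ are detectable from $X$ together with its $\Gamma$-action alone, rather than being blurred by a large torsion-free tail. This is precisely the rigidity that was missing, and had to be imposed as an ad hoc assumption, in \cite{Iv}.

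Third, combining this Iwasawa-theoretic rigidity with the Poitou--Tate sequence over $K_\cyc$ and with the group-theoretic criteria from \cite{Iv} (the expected local shape of $D_{\bap}$, i.e.\ a Frobenius together with a tame generator satisfying the standard relation, and the right cohomological dimension of a local pro-$p$ group at a prime away from $p$), I would characterise $D_{\bap}$ intrinsically inside $G_{K_\cyc, S \cup S_p}$ as the essentially unique closed subgroup of that local shape whose image in $X$ realises the prescribed $\Gamma$-equivariant piece; transport via Step 1 then yields $D_{\bap} \hookrightarrow G_{K,S}$. The hardest step is clearly the second: Greenberg's conjecture is a statement about a single fixed number field, whereas the argument requires it uniformly across all totally real subextensions of $K_S/K$ that are encountered, and one must verify that the conclusion (pseudo-nullity of the relevant Iwasawa modules) is strong enough to eliminate \emph{all} ambiguity in the local decomposition, even though only certain plus-parts are directly controlled by the conjecture while the reconstruction naturally involves the full module.
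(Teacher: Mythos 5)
There are two genuine gaps, both located exactly where your argument would have to do the work that the paper's proof is designed to do. First, your Step 1 assumes that $(\Gal_{K,S},p)$ recovers the $p$-cyclotomic character and hence the cyclotomic $\bZ_p$-extension $K_{\cyc}/K$ inside $\Gal_{K,S}$. This is not available: in \cite{Iv} the character $\chi_p$ is part of the input data (cf.\ Corollary \ref{cor:intro:loccor_on_bdr_diff_conds}(iii)), and even under Leopoldt one only gets $\bN(S)$, not $\chi_p$, from the group alone. The whole point of Theorem \ref{thm:main_boundary} is to dispense with $\chi_p$. The paper circumvents this by working not over $K_{\cyc}$ but over the compositum $K_{\infty}$ of \emph{all} $\bZ_p$-extensions, which \emph{is} group-theoretically definable from $(\Gal_{K,S},p)$, and by recovering $\sharp(S^f\sm S_p)(K)$ as $\max_{\chi}\rank_{\bZ_p}\Hom_{\Gamma}(V,\bZ_p(\chi))$ over \emph{all} continuous characters $\chi$ of $\Gamma$, so that one never needs to know which character is cyclotomic.

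Second, your use of Greenberg's conjecture is both the wrong form and applied in a setting where it does not give the needed rigidity. You invoke the classical conjecture ($\lambda=\mu=0$ for totally real fields, ``plus parts''), but $K$ here is arbitrary (after the reduction one even has $\mu_p\subseteq K$), and over the one-variable module $X=\Gal(M/K_{\cyc})$ the quotient ``unramified outside $p$'' part is in general a large torsion $\Lambda$-module, so the local contribution $\bigoplus_{\fp\in S^f\sm S_p}\Ind_{\Gamma_{\fp}}^{\Gamma}\bZ_p(1)$ is \emph{not} isolated as the torsion submodule, and no pseudo-nullity of plus parts repairs this. The paper instead uses Greenberg's \emph{generalized} conjecture (\cite{Gre} Conjecture 3.5) over $K_{\infty}$, which by \cite{LN} and \cite{McCal} is equivalent to $Y$ being $\Lambda$-torsion free over the multi-variable Iwasawa algebra; only then is $V=\ker(Y_S\to Y)$ exactly the $\Lambda$-torsion submodule of $Y_S$, hence intrinsic. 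Finally, your endgame via Poitou--Tate and ``local shape'' is vaguer than what is actually needed: after counting primes at every finite level one still has to detect, purely group-theoretically, the subgroups of $p$-decomposition type lying in decomposition groups, which the paper does via the completely-split subextensions $t_{p,U}$ and an $S$-version of the principal ideal theorem (Lemma \ref{lm:criterion_for_being_pSyl_of_dec_group}); a Frobenius-plus-tame-generator presentation alone does not distinguish a genuine decomposition subgroup from an abstract Demushkin subgroup of $\Gal_{K,S}$.
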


By all involved number fields we mean all finite subextensions of $K_S/K$. Let $\bN(S) := \bN \cap \caO_{K,S}^{\ast}$. From Theorem \ref{thm:main_boundary} and \cite{Iv} Theorem 1.1 and Proposition 4.1 we immediately deduce:

\begin{cor}\label{cor:intro:loccor_on_bdr_diff_conds}
Let $\Spec \caO_{K,S}$ be such that at least two rational primes lie in $\bN(S)$ and $S_{\infty} \subseteq S$. Let $p \in \bN(S)$ and let $\chi_p$ be the $p$-part of the cyclotomic character on $\Gal_{K,S}$. 
\begin{itemize}
\item[(i)] If the Leopoldt conjecture holds for all involved number fields and all primes and the Greenberg conjecture holds for all involved number fields for at least two primes in $\bN(S)$:
\[ G_{K,S} \rightsquigarrow (D_{\bap} \har G_{K,S})_{\bap \in S^f}. \]
\item[(ii)] If the Leopoldt conjecture holds for all involved number fields and all primes, then $G_{K,S} \rightsquigarrow \bN(S)$ and 
\[ (G_{K,S},\chi_p) \rightsquigarrow (D_{\bap} \har G_{K,S})_{\bap \in S^f}. \]
\item[(iii)] $(\Gal_{K,S},p,\chi_p) \rightsquigarrow (D_{\fp} \subseteq \Gal_{K,S})_{\fp \in S^f }$.
\end{itemize}
\end{cor}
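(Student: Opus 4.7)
The plan is to cover $S^f = S_p \sqcup (S^f \sm S_p)$ by combining two reconstructions and, where necessary, first extracting $\bN(S)$ intrinsically from $G_{K,S}$. From this paper, Theorem \ref{thm:main_boundary} recovers decomposition subgroups at primes of $S^f \sm S_p$ from $(G_{K,S}, p)$, conditional on Greenberg at $p$. From \cite{Iv}, Theorem 1.1 recovers the full family $(D_\fp)_{\fp \in S^f}$ from $(G_{K,S}, p, \chi_p)$ unconditionally in Greenberg, while Proposition 4.1 extracts the set $\bN(S)$ from $G_{K,S}$ under the Leopoldt conjecture.

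Part (iii) is then immediate: the data $(G_{K,S}, p, \chi_p)$ are exactly the input of \cite{Iv} Theorem 1.1, whose conclusion is exactly the claimed family. For part (ii) the first step is to apply \cite{Iv} Proposition 4.1 to read off $\bN(S)$ from $G_{K,S}$ using Leopoldt; the specific prime $p \in \bN(S)$ is then picked out by $\chi_p$, after which (iii) applies.

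For part (i) no cyclotomic datum is available, so one trades $\chi_p$ for a second Greenberg-prime. Having recovered $\bN(S)$ again via Leopoldt, the hypothesis supplies distinct primes $p_1, p_2 \in \bN(S)$ at which Greenberg holds, and Theorem \ref{thm:main_boundary} applied to each yields decomposition subgroups on $S^f \sm S_{p_i}$ for $i = 1, 2$. Since $p_1 \neq p_2$ forces $S_{p_1} \cap S_{p_2} = \emptyset$, one has $(S^f \sm S_{p_1}) \cup (S^f \sm S_{p_2}) = S^f$, so the two partial families already exhaust $S^f$.

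The main subtlety is the compatibility in part (i): for primes of $S^f \sm (S_{p_1} \cup S_{p_2})$ a decomposition subgroup is produced by two different applications of Theorem \ref{thm:main_boundary}, and these must coincide. I expect this to follow from an intrinsic group-theoretic characterization of $D_\fp$ inside $G_{K,S}$ afforded by the theorem, independent of which auxiliary Greenberg-prime is used; once this is granted, the three parts of the corollary assemble directly from the two source results, in line with the ``immediately deduce'' phrasing of the statement.
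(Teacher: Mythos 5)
Your assembly is essentially the paper's intended (and unwritten) argument: the corollary is deduced ``immediately'' by recovering $\bN(S)$ from $G_{K,S}$ via \cite{Iv} Proposition 4.1 under Leopoldt, quoting \cite{Iv} Theorem 1.1 for (iii) (and hence for (ii), since $\chi_p$ determines $p$), and, for (i), applying Theorem \ref{thm:main_boundary} at two Greenberg primes $p_1\neq p_2\in\bN(S)$, whose complements $S^f\sm S_{p_1}$ and $S^f\sm S_{p_2}$ cover $S^f$. The compatibility point you flag in (i) is automatic rather than needing a separate intrinsic-characterization argument: both applications of Theorem \ref{thm:main_boundary} are valid under the respective Greenberg hypotheses and therefore each returns the actual decomposition subgroups, so they coincide on $S^f\sm(S_{p_1}\cup S_{p_2})$ by definition.
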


In the rest of the article we consider the local correspondence \emph{on the curve}, i.e., for points $\not\in S$. More precisely, we will give a (unfortunately, not purely) group theoretic criterion for a subgroup $D \subseteq \Gal_{K,S}$ to be a decomposition subgroup of a prime in the spirit of Tamagawa's work \cite{Ta} in the case of curves over finite fields. Here is our main result. The important technical conditions (C2)-(C4) made there will be explained in Sections \ref{sec:Conditions_C1_C3_etc}, \ref{sec:anab_geom} below. Essentially, they ensure that $\Gal_{K,S}$ is sufficiently big and that the decomposition subgroups lie sufficiently independent inside it. Unfortunately, these conditions are still not known to hold for arithmetic fundamental groups. 

\begin{thm}\label{thm:main_thm}
Assume that the generalized Riemann hypothesis holds for all involved number fields. For $i=1,2$ let $\Spec \caO_{K_i,S_i}$ be an arithmetic curve. Let 
\[\sigma \colon \Gal_{K_1,S_1} \stackrel{\sim}{\rar} \Gal_{K_2,S_2}\] 

\noindent be a topological isomorphism of the fundamental groups. Assume that 

\begin{itemize}
\item[(1)] $S_i$ contains the archimedean primes and at least two rational primes are invertible on $\Spec \caO_{K_i,S_i}$, $K_{i,S_i}$ realizes locally at each $\fp \in S_i$ the maximal local extension\footnote{If $S \supseteq S_p \cup S_{\ell} \cup S_{\infty}$ and $S$ is defined over a totally real subfield, then this holds (cf. \cite{CC} Remark 5.3(i)).} and $\sigma$ induces a local correspondence on the boundary (cf. Corollary \ref{cor:intro:loccor_on_bdr_diff_conds})
\item[(2)] condition (C4) and either condition (C2) or condition (C3) hold for $K_i,S_i$ for $i = 1,2$\footnote{cf. Sections \ref{sec:Conditions_C1_C3_etc}, \ref{sec:anab_geom}.} 
\item[(3)] for any finite (sufficiently big) subextension $K_{1,S_1}/L_1/K_1$, one has $h_{L_1}\Regul_{L_1} = h_{L_2}\Regul_{L_2}$\footnote{In fact, it is enough to require a condition on the behavior of this quantity in infinite Galois subtowers $K_{i,S_i}/\cL_i/L_i/K_i$. Further, due to \cite{Iv} Proposition 4.2, it is equivalent to require equality of regulators.}.
\end{itemize}

\noindent Then for any $K_{1,S_1}/L_1/K_1$ finite, $\sigma$ induces a bijection (the local correspondence map):
\[ \sigma_{\ast,L_1} \colon |\Spec \caO_{L_1,S_1}| \stackrel{\sim}{\rar} |\Spec \caO_{L_2,S_2}|, \] 

\noindent characterized by $\sigma(D_{\bap}) = D_{\sigma_{\ast}(\bap)}$, where $\sigma_{\ast}$ is the inverse limit over all $\sigma_{\ast,L_1}$. The maps $\sigma_{\ast,L_1}$ are compatible for varying $L_1$ and preserve the residue characteristic and the absolute inertia degrees of primes.
\end{thm}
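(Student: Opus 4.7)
The plan is to characterize decomposition subgroups at primes $\bap \notin S_1$ intrinsically from the given data, in the spirit of Tamagawa's reconstruction for curves over finite fields, but substituting the Tsfasman--Vl\u{a}du\c{t} asymptotic class number formula for the Lefschetz trace formula. Hypothesis~(3), combined with the equality $[L_1:\bQ]=[L_2:\bQ]$ forced by $\sigma$, makes
$$\beta(L/K) := \frac{\log(h_L \Regul_L)}{[L:\bQ]}$$
a $\sigma$-invariant of every finite subextension, and passing to the limit along any asymptotically good infinite subextension $K_S/\cL/K$ yields a $\sigma$-invariant asymptotic value $\beta(\cL/K)$.

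Under GRH, Tsfasman--Vl\u{a}du\c{t} expand
$$\beta(\cL/K) \;=\; \tfrac{1}{2}\log|d|(\cL) \;-\; \sum_{q} \phi_q(\cL/K)\log\tfrac{q}{q-1} \;-\; \phi_\bR(\cL/K)\log 2 \;-\; \phi_\bC(\cL/K)\log(2\pi),$$
where the sum runs over prime powers $q$, $\phi_q$ is the asymptotic density per unit degree over $\bQ$ of finite primes of the tower with residue field $\bF_q$, $\phi_\bR, \phi_\bC$ are the archimedean analogues, and $|d|(\cL)$ is the asymptotic root discriminant. The archimedean densities are accessible to $\sigma$ through the archimedean decomposition groups matched by the boundary correspondence of hypothesis~(1); the root discriminant is accessible via the conductor--discriminant formula applied to inertia at primes of $S$, likewise matched by Corollary~\ref{cor:intro:loccor_on_bdr_diff_conds}. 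Subtracting these recognizable contributions isolates $\sum_q \phi_q(\cL/K)\log\tfrac{q}{q-1}$, and by varying $\cL$ inside subtowers focused on a single residue characteristic $p$ (identifiable through the boundary data and $\chi_p$) one separates this sum and recovers each $\phi_q(\cL/K)$ as a $\sigma$-invariant.

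A closed procyclic subgroup $D \subseteq \Gal_{K_1,S_1}$ not contained in any boundary decomposition subgroup is then characterized as the decomposition subgroup of a prime $\bap$ with $N\bap = q$ by the following intrinsic condition on its fixed field $\cL^D := K_{1,S_1}^D$: the density $\phi_q(\cL^D/K_1)$ attains the extremal value forced by complete splitting of a prime of norm $q$ (namely $\geq 1/[K_1:\bQ]$), and $D$ is maximal procyclic with this property. Conditions (C2)/(C3) and (C4) are precisely what is needed to make this description single out exactly the decomposition subgroups: (C4) provides the Chebotarev-type richness preventing ``generic'' procyclic subgroups from accidentally realizing the extremal splitting behaviour, while (C2)/(C3) encode the mutual independence of distinct decomposition subgroups inside $\Gal_{K_1,S_1}$ that makes the maximality unambiguous. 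Setting $\sigma_{\ast,L_1}(\fp_1) := \fp_2$, where $\fp_2$ is the unique prime of $L_2$ whose decomposition class satisfies $\sigma([D_{\fp_1}]) = [D_{\fp_2}]$, defines the sought correspondence at each finite level; compatibility for varying $L_1$ is automatic from the construction, and the preservation of residue characteristic (the rational prime below $q$) and of inertia degree ($\log_p q$) follows since both are encoded in the invariant $q$. The main obstacle---and the reason the theorem remains conditional---is exactly this last step: the anabelian conditions (C2)--(C4) on which the separation of Frobenius-type procyclic subgroups from arbitrary procyclic subgroups rests are not presently known to hold for arithmetic fundamental groups.
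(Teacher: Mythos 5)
Your overall strategy (turn $h_L\Regul_L$ into asymptotic Tsfasman--Vl\u{a}du\c{t} invariants and use them to single out decomposition subgroups, \`a la Tamagawa) is indeed the paper's strategy, but there is a genuine gap at the step where you ``subtract the recognizable contributions'': you assert that the root discriminant of the tower is accessible to $\sigma$ ``via the conductor--discriminant formula applied to inertia at primes of $S$''. This is exactly what is \emph{not} available. Neither the absolute discriminant (genus $g_L$) of a base field $L$ nor the wild part of a relative discriminant can be read off from the fundamental group: an abstract inertia subgroup determines the discriminant exponent only for tame ramification, and the towers actually relevant here (the fields $K_S^{Z_L}$ fixed by normal closures of candidate procyclic subgroups) are wildly ramified, e.g.\ they contain $\bZ_p$-cyclotomic towers, so they are even asymptotically bad and the genus-normalized densities $\phi_q$ you want to compare all vanish; converting to degree-normalized densities requires control of $\mu$ or $\mu_{\rel}$, which is again a discriminant statement. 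The paper's proof exists precisely to circumvent this: it works with \emph{relative} invariants, notes that $\sigma$ preserves $g_{M/L}$ and hence $\mu_{\rel}$ only for \emph{tame} subtowers (equation \eqref{eq:tametower_equal_mu_rels}), kills the unknown base genus by taking the \emph{difference} $\frac{\lambda_{\rel}(\cL^H/L)}{\mu_{\rel}(\cL^H/L)}-\frac{\lambda_{\rel}(\cL/L)}{\mu_{\rel}(\cL/L)}=\beta(\cL^H/L)-\beta(\cL/L)$ over the same base (Proposition \ref{prop:normal_case_invariants}), and then recovers the invariant $\beta(K_S^{Z_L}/L)$ of the wildly ramified tower as a limit of such tame-tower differences via Proposition \ref{prop:elegant_comp_for_almost_tame_towers} and Lemma \ref{lm:cont_of_beta}; condition (C4) is exactly what legitimizes this tame approximation (no primes of finite norm in $\bigcup_M M_S^{\tame}$, infinite tame ramification), not a ``Chebotarev-type richness'' hypothesis as you describe it. Without this mechanism your subtraction step is circular, since the quantities you subtract are among the invariants one is trying to prove $\sigma$-invariant. (There is also a sign slip in your expansion: in Theorem \ref{thm:TVZ} the finite-prime terms enter $\BS$ with a plus sign.)

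A second, smaller gap is in the recognition criterion itself. You characterize $D$ by an extremal value of $\phi_q$ on its fixed field and ``maximality among procyclic subgroups'', but the paper needs a sharper statement: it applies $\beta$ to the \emph{normal closures} $\langle\langle Z\rangle\rangle_U$ over all finite levels, and the equivalence between having (C2)/(C3)-decomposition behavior and being (openly contained in) a decomposition subgroup is the content of Theorem \ref{thm:crit_2}, whose converse direction requires a genuine argument (uniqueness of the prime under $Z_L$, and the integrality argument forcing $c=1$ so that $Z$ meets $D_{\bap}$ openly), after which Corollary \ref{cor:determining_the_dec_groups} and the formula $\N\fp=e^{C}/(e^{C}-1)$ give norm preservation. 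Your proposal does not supply an argument for why the extremal-density property cannot be realized by a procyclic subgroup that is not (up to open index) a decomposition subgroup, and the conditions (C2)/(C3) as stated in the paper are formulated in terms of the limits of $\beta$ rather than in terms you could invoke for that purpose.
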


This theorem gives an approach to the (weaker form of) Isom-conjecture of Grothendieck (cf. \cite{Gro} and \cite{NSW} 12.3.4,12.3.5) in the case of arithmetic curves. Indeed, as $\sigma_{\ast}$ preserves the absolute norm of primes, it also preserves the residue characteristics and the inertia degrees over $\bQ$. Thus by Chebotarev density theorem, one has the following corollary (which is proven exactly as Theorem 2 in \cite{Ne}).

\begin{cor}
For $i=1,2$ let $\Spec \caO_{K_i,S_i}$ be an arithmetic curve. If a local correspondence $\sigma_{\ast}$ as in Theorem \ref{thm:main_thm} is established, and $K_1/\bQ$ is normal, then $K_1 \cong K_2$.
\end{cor}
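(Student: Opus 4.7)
\begin{pro}
My plan is to reduce the claim to the classical Neukirch-style fact that a number field Galois over $\bQ$ is determined, up to isomorphism, by its set of totally split rational primes, and to feed this fact with the data extracted from $\sigma_{\ast}$. This is how [Ne, Theorem 2] proceeds in an analogous situation.

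First I would specialise the local correspondence to $L_1 = K_1$: by Theorem \ref{thm:main_thm} the bijection
\[ \sigma_{\ast,K_1} \colon |\Spec \caO_{K_1,S_1}| \stackrel{\sim}{\rar} |\Spec \caO_{K_2,S_2}| \]
preserves residue characteristic and absolute inertia degree. Hence for each rational prime $p$ none of whose primes above in $K_1$ or $K_2$ lies in $S_1 \cup S_2$ and which is unramified in both $K_1$ and $K_2$ (this excludes only finitely many $p$), $\sigma_{\ast,K_1}$ restricts to a bijection on the full set of primes above $p$ preserving $f(\bap|p)$. Using $\sum_{\bap|p} f(\bap|p) = [K_i:\bQ]$ for one such $p$, I obtain $n := [K_1:\bQ] = [K_2:\bQ]$.

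With the degrees fixed, the multisets of inertia degrees above $p$ in $K_1$ and in $K_2$ coincide for almost all $p$, so in particular $p$ splits completely in $K_1$ iff it does in $K_2$. Writing $\tilde{K}_2$ for the Galois closure of $K_2/\bQ$, one has $\mathrm{Spl}(\tilde{K}_2/\bQ) = \mathrm{Spl}(K_2/\bQ)$, so $\mathrm{Spl}(K_1/\bQ)$ and $\mathrm{Spl}(\tilde{K}_2/\bQ)$ agree up to a finite set. Since $K_1/\bQ$ is normal, the classical consequence of Chebotarev density — two finite Galois extensions of $\bQ$ with the same set of totally split primes (modulo density zero) must coincide inside a common algebraic closure — yields $K_1 \cong \tilde{K}_2$. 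The inequality $n = [K_2:\bQ] \leq [\tilde{K}_2:\bQ] = [K_1:\bQ] = n$ then forces $K_2 = \tilde{K}_2$, and hence $K_1 \cong K_2$.

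Once Theorem \ref{thm:main_thm} is granted there is no genuine obstacle here; the argument is a short, standard application and amounts mainly to bookkeeping the finite exceptional set of rational primes (those ramified in $K_1$ or $K_2$, or sitting below some element of $S_1 \cup S_2$), which does not affect the Chebotarev input.
\end{pro}
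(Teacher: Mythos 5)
Your argument is correct and is essentially the paper's own proof: the paper simply notes that $\sigma_{\ast}$ preserves residue characteristics and inertia degrees and then invokes the Chebotarev/Bauer argument of Theorem 2 in \cite{Ne}, which is exactly the split-prime comparison you carry out (via the Galois closure of $K_2$). You have just written out the details that the paper leaves to the reference.
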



As $h_L\Regul_L$ is encoded in the residue at $s = 1$ of the zeta-function $\zeta_L$ of $L$, Theorem \ref{thm:main_thm} says essentially that $\Gal_{K,S}$ (under certain hypotheses, ensuring that it is sufficiently big) plus the family of the residues of the functions $\zeta_L$ for $K_S^{\bullet}/L/K$ at $s = 1$ give enough information to reconstruct $K$, at least when $K$ is normal over $\bQ$.

The main idea in the proof of Theorem \ref{thm:main_thm} is a group-theoretic characterization of decomposition subgroups of points on $\Spec \caO_{K,S}$ in the spirit of Tamagawa's work \cite{Ta}. Instead of using \'{e}tale cohomology and Lefschetz fixed point formula, which are not available in our case, we use the theorem of Tsfasman-Vl\u{a}du\c{t} \cite{TV} (and its further generalization by Zykin \cite{Zy}), which is a generalization of the Brauer-Siegel theorem. The application of Tsfasman-Vl\u{a}du\c{t} theorem is the place where we need the behavior of the regulators of the fields in question. While it is completely unclear how to recover these regulators from the fundamental group $\Gal_{K,S}$, the $p$-adic volume $\vol_p(K)$ of the unit lattice (for totally real fields, it is the $p$-adic norm of the $p$-adic regulator) can be recovered if $S_p \subseteq S$ (see Section \ref{sec:padic_regulator}). Unfortunately, the $p$-adic analogue of Brauer-Siegel and hence also of Tsfasman-Vl\u{a}du\c{t} fails (cf. \cite{Wa}), so our method would fail, if we replace $\Regul_K$ by $\vol_p(K)$. This do not exclude that there is still some similar way of using $\vol_p(K)$ to obtain information on decomposition subgroups.  

Note the following subtlety: in the application of Tsfasman-Vl\u{a}du\c{t}, we need to restrict attention to the maximal tame subextension, as infinitely wildly ramified towers are in general asymptotically bad and hence the invariants we work with get trivial. We were inspired to consider tamely ramified towers to obtain non-trivial invariants by the work of Hajir and Maire \cite{HM}. Nevertheless, part (2) of the assumptions of Theorem \ref{thm:main_thm} concerns the \emph{whole} group $\Gal_{K,S}$, and not only its maximal tame quotient. This is possible because we are able to deduce non-trivial invariants attached to an infinitely wildly ramified tower by considering limits of invariants of tame subtowers, cf. e.g. Proposition \ref{prop:elegant_comp_for_almost_tame_towers}. 

\subsection*{Notation}
Let us collect the notations used throughout the paper. Let $K$ be a number field, i.e., a finite extension of $\bQ$. Then $\Sigma_K$ is the set of all primes (archimedean or not) of $K$, $S_{\infty} := S_{\infty}(K)$ is the set of archimedean primes of $K$ and $S^f := S \sm S_{\infty}(K)$ for any set $S$ of primes of $K$. For $S,R \subseteq \Sigma_K$, $K_S^R$ is the maximal extension of $K$ unramified outside $S$ and completely split in $R$. For a prime $\fp$ of $K$ we write $\N\fp$ for its norm over $\bQ$. Further, $D_K, h_K, \Regul_K$ are the absolute discriminant, the class number, the regulator of $K$ and $g_K := \log |D_K|^{\frac{1}{2}}$ is the genus of $K$. For a finite ramified extension $L/K$ we set $g_{L/K} := \log |\N_{K/\bQ} D_{L/K}|^{\frac{1}{2}}$.

If $L/K$ is a Galois extension and $\bap$ is a prime of $L$, then $D_{\bap, L/K} \subseteq \Gal_{L/K}$ denotes the decomposition subgroup of $\bap$. If $\fp := \bap|_K$ is the restriction of $\bap$ to $K$, then we sometimes write $D_{\bap/\fp}$ or simply $D_{\bap}$ instead of $D_{\bap,L/K}$.

A prime always mean a non-archimedean prime.

\subsection*{Outline of the paper}

In Section \ref{sec:boundary_via_Greenberg_conjecture}, we consider the approach to the local correspondence at the boundary via Iwasawa theory. In Section \ref{sec:some_invs_of_infinite_towers} we show how to use results of Tsfasman-Vl\u{a}du\c{t} to deduce information on wildly ramified towers. In Section \ref{sec:char_of_dec_groups} we characterize decomposition subgroups inside $\Gal_{K,S}$ using certain invariants attached to intermediate fields. In Section \ref{sec:anab_geom} we prove our main result Theorem \ref{thm:main_thm} and in Section \ref{sec:padic_regulator} we show how to reconstruct the $p$-adic volume of the unit lattice from $\Gal_{K,S}$.

\section{Decomposition subgroups of primes at the boundary} \label{sec:boundary_via_Greenberg_conjecture}

In this section, we use Iwasawa theory, to strengthen results from \cite{Iv} significantly under assumption of the Greenberg conjecture. We are going to prove Theorem \ref{thm:main_boundary} from the introduction.

\begin{repthm}{thm:main_boundary}
Assume the Greenberg conjecture holds for all involved number fields and for the fixed prime $p$. Let $K$ be a number field, $S \supseteq S_p \cup S_{\infty}$. Then 

\[ (G_{K,S},p) \rightsquigarrow (D_{\bap} \har G_{K,S})_{\bap \in S^f \sm S_p}. \]
\end{repthm}

\begin{proof}
By \cite{Iv} Proposition 2.4, the intersections of the $p$-Sylow subgroups inside the decomposition subgroups $D_{\bap/\fp}$ of primes $\fp \in S^f \sm S_p$ are not open in any of them. This implies that it is enough to prove the theorem after replacing $K$ by a finite subextension contained in $K_S$ (cf. \cite{Iv} Remark 1.2). Thus we may assume that $\mu_p \subseteq K$.

Let $K_{\infty}/K$ be the compositum of all $\bZ_p$-extensions of $K$. Then the Galois group $\Gamma := \Gal_{K_{\infty}/K}$ is isomorphic to $\bZ_p^d$ with $d = r_2 + 1 + \delta_p(K)$, where $\delta_p(K)$ is the Leopoldt defect of $K$. Let 

\[ \Lambda := \bZ_p\llbracket \Gamma \rrbracket \cong \bZ_p \llbracket T_1, \dots, T_d \rrbracket\] 

\noindent be the corresponding Iwasawa algebra. Introduce the following notations:

\begin{eqnarray*}
M_{\infty} &=& \text{maximal abelian pro-$p$-extension of $K_{\infty}$ unramified outside $S_p \cup S_{\infty}$} \\
M_{\infty,S} &=& \text{maximal abelian pro-$p$-extension of $K_{\infty}$ unramified outside $S$} \\
Y &=& \Gal_{M_{\infty}/K_{\infty}} \\
Y_S &=& \Gal_{M_{\infty,S}/K_{\infty}}.
\end{eqnarray*}

\noindent For a prime $\fp \in S$, let $\Gamma_{\fp} \subseteq \Gamma$ denote the decomposition subgroup of $\fp$. The next lemma shows how the Iwasawa-theoretic result \cite{NSW} 11.3.5 generalizes to $K_{\infty}/K$. Let $I(K_{\fp}(p)/K_{\fp})$ denote the inertia subgroup of the Galois group of the extension $K_{\fp}(p)/K_{\fp}$ and let $\bZ_p(1)$ denote the first Tate-twist and $\Ind_{\Gamma_{\fp}}^{\Gamma}$ the compact induction functor from compact $\Gamma_{\fp}$-modules to compact $\Gamma$-modules, as explained in \cite{NSW} in the text preceeding 11.3.5. 

\begin{lm}\label{lm:ex_seq_of_Lambda_modules}
There is a canonical exact sequence of $\Lambda$-modules

\begin{equation} \label{eq:ex_seq_of_Lambda_modules} 
0 \rar \bigoplus_{\fp \in (S^f \sm S_p)(K)} \Ind\nolimits_{\Gamma_{\fp}}^{\Gamma} \bZ_p(1) \rar Y_S \rar Y \rar 0. 
\end{equation}

\end{lm}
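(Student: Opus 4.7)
The plan is to adapt the argument of \cite{NSW} 11.3.5 from a single $\bZ_p$-extension to the multi-dimensional case of $K_\infty/K$. Since $K_\infty/K$ is unramified outside $S_p\cup S_\infty$ (any $\bZ_p$-extension of a number field is), and $M_\infty\subseteq M_{\infty,S}$ by the obvious inclusion of ramification data, we obtain the surjection $Y_S\twoheadrightarrow Y$; its kernel is $N:=\Gal(M_{\infty,S}/M_\infty)$. So the content of the lemma is to identify $N$ with the displayed direct sum of induced modules.

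First I would show that $M_{\infty,S}/M_\infty$ is generated, as a Galois extension of $M_\infty$, by the inertia subgroups at primes of $M_\infty$ lying over primes in $S^f\sm S_p$. This is because $M_\infty$ is the maximal subextension of $M_{\infty,S}/K_\infty$ that is unramified outside $S_p\cup S_\infty$; by Galois theory $N$ is topologically generated by the union of inertia subgroups $I_{\bar\fq}\subseteq N$ for $\bar\fq$ a prime of $M_{\infty,S}$ above some $\fp\in S^f\sm S_p$. Next, for such a $\fp$, pick a prime $\bap$ of $K_\infty$ over $\fp$; its residue characteristic $\ell$ is different from $p$, so the maximal abelian pro-$p$ extension of the completion $K_{\infty,\bap}$ has trivial wild inertia, and its tame inertia subgroup is naturally isomorphic to $\bZ_p(1)$ as a $\Gamma_\bap$-module, using $\mu_p\subseteq K\subseteq K_\infty$. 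This identifies the local inertia contribution of $\bap$ in $N$ with a quotient of $\bZ_p(1)$.

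Second, I would globalize. The primes $\bap$ of $K_\infty$ above $\fp\in S^f\sm S_p$ are permuted transitively by $\Gamma$ with stabilizer $\Gamma_\fp$ (the decomposition subgroup in $\Gamma$), so they are indexed by $\Gamma/\Gamma_\fp$. Hence the $\Gamma$-module generated by the local inertia contributions for all $\bap$ over $\fp$ is a quotient of $\Ind_{\Gamma_\fp}^\Gamma \bZ_p(1)$, and summing over $\fp\in (S^f\sm S_p)(K)$ yields a surjection from the left-hand side of \eqref{eq:ex_seq_of_Lambda_modules} onto $N$. To finish, I would show both (a) that for each $\bap$ the full $\bZ_p(1)$ of tame inertia actually embeds into $N$ (no local-to-global loss) and (b) that the inertia contributions from distinct primes $\bap$ lie independently inside $N$, i.e.\ the global map is injective. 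Point (a) can be handled by exhibiting, for any $n\geq 1$, sufficiently many abelian pro-$p$ extensions of $K_\infty$ tamely ramified at $\bap$ of exponent $p^n$ (for instance via Kummer extensions $K_\infty(\pi_\bap^{1/p^n})$ with $\pi_\bap$ a uniformizer, using $\mu_{p^n}\subseteq K_\infty$), which are unramified at all other primes not in $S_p\cup S_\infty$. Point (b) is the linear disjointness statement: the various $K_\infty(\pi_\bap^{1/p^n})$ for different $\bap$ are linearly disjoint over $M_\infty$, since their ramification loci are disjoint.

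The main obstacle I anticipate is the injectivity in step (b), i.e.\ ruling out relations among the inertia elements in $N$. In the classical $\bZ_p$-case this is \cite{NSW} 11.3.5 and ultimately rests on a rank count using the global Euler--Poincar\'e characteristic; in the $\bZ_p^d$-setting the analogous clean rank computation is less immediate, and I expect one must instead argue Kummer-theoretically as above, using that $M_\infty$ is taken to be the \emph{maximal} abelian pro-$p$ extension of $K_\infty$ unramified outside $S_p\cup S_\infty$, so that any relation among inertia elements would force a further unramified abelian pro-$p$ extension sitting inside $M_\infty$ itself --- contradiction. Once injectivity is established, the identification of the kernel as the claimed direct sum of induced modules is compatible with the $\Lambda$-action by construction, and \eqref{eq:ex_seq_of_Lambda_modules} follows.
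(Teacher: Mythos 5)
Your overall skeleton (identify the kernel of $Y_S\tar Y$ with the inertia contributions, compute each local tame inertia as $\bZ_p(1)$ using that the residue extension at $\fp\nmid p$ in $K_\infty/K$ is infinite, and index the primes over $\fp$ by $\Gamma/\Gamma_\fp$) matches the paper. But you have correctly located the hard point --- left-exactness --- and the argument you propose for it has a genuine gap. The paper does not argue Kummer-theoretically at all: it observes that the weak Leopoldt conjecture, known for the cyclotomic $\bZ_p$-extension, propagates to the compositum $K_\infty$ of all $\bZ_p$-extensions (citing Nguyen-Quang-Do, Cor.\ 2.9), i.e.\ $H^2(K_S/K_\infty,\bQ_p/\bZ_p)=0$, and that this vanishing is exactly what makes the proof of \cite{NSW} 11.3.5 go through verbatim, yielding the sequence with $\prolim$ of coinvariants of local inertia on the left; the lemma then follows by identifying that term. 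In other words, the injectivity you are worried about is not an elementary linear-disjointness statement; it is equivalent to a cohomological vanishing, and that is the one input you never supply.

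Concretely, your step (a) is where the proposal breaks. To produce $K_\infty(\pi_{\bap}^{1/p^n})$ unramified outside $S$ and ramified at $\bap$, you need at some finite level $K_m$ a \emph{global} element $\pi$ with $v_{\fp_m}(\pi)$ prime to $p$ and $v_{\fq}(\pi)\equiv 0 \pmod{p^n}$ for every $\fq\notin S$; this forces the class of $\fp_m$ to die in $\Cl_S(K_m)/p^n$, which is false in general and is precisely the class-group obstruction that the $H^2$-vanishing is designed to control. (A second, related point you pass over: at each finite level the tame inertia at $\fp_m$ in any abelian pro-$p$ extension is killed by $p^{v_p(\N\fp_m-1)}$, and whether even this bounded amount is realized globally depends on the image of the $S$-units in the residue field --- again a global condition, not a local one.) Your step (b), granting (a), is essentially fine: a common subextension of two towers with disjoint ramification loci over $M_\infty$ would be unramified outside $S_p\cup S_\infty$ and hence contained in $M_\infty$ by maximality. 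So the fix is not to repair the Kummer construction but to replace step (a) (and the injectivity as a whole) by the weak Leopoldt input, as the paper does.
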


\begin{proof}
As the weak Leopoldt conjecture holds for the cyclotomic $\bZ_p$-extension, it also holds for $K_{\infty}/K$ (\cite{Ng} Corollary 2.9). Hence, exactly as in the proof of \cite{NSW} 11.3.5 we obtain the exact sequence

\[ 0 \rar \prolim_{U \subseteq \Gamma} \prod_{\fp \in (S^f \sm S_p)(L)} I(K_{\fp}(p)/K_{\fp})_{\Gal_{L,\fp}} \rar Y_S \rar Y \rar 0, \]

\noindent where the limit is taken over all open subgroups $U \subseteq \Gamma$, $L$ is the subfield of $K_{\infty}$ corresponding to $U$ and $\Gal_{L,\fp}$ is the decomposition subgroup of $\fp$ in the extension $K_{S_p \cup S_{\infty}}(p)/L$. As $K_{\infty}$ contains the cyclotomic $p$-extension of $K$, we have $\Gamma_{\fp} \cong \bZ_p$. We have $K_{\infty} \subseteq K_{S_p \cup S_{\infty}}(p)$, the decomposition subgroup of a prime $\fp \in S^f \sm S_p$ in $K_{S_p \cup S_{\infty}}(p)/K_{\infty}$ is trivial and $I(K_{\fp}(p)/K_{\fp}) \cong \bZ_p(1)$. Hence the lemma follows.
\end{proof}

Observe that $(G_{K,S},p) \rightsquigarrow \Gamma, \Lambda$, the $\Lambda$-module $Y_S$. Define the $\Lambda$-module $V$ by 

\[ V := \ker(Y_S \rar Y) = \bigoplus\limits_{\fp \in (S^f \sm S_p)(K)}  \Ind\nolimits_{\Gamma_{\fp}}^{\Gamma} \bZ_p(1). \] 

\begin{lm}\label{lm:GKSp_determines_V}
$(G_{K,S},p) \rightsquigarrow$ the sequence \eqref{eq:ex_seq_of_Lambda_modules} of $\Lambda$-modules and, in particular, the $\Lambda$-module $V$.
\end{lm}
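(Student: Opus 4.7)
The plan is to reconstruct each term of the sequence \eqref{eq:ex_seq_of_Lambda_modules} from $(G_{K,S},p)$ in turn.

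First I would recover $\Gamma$ and $\Lambda$. Any $\bZ_p$-extension of $K$ is unramified outside $S_p\cup S_\infty\subseteq S$, so $K_\infty\subseteq K_S$. Moreover, for $\fp\in S^f\sm S_p$, the image of the inertia at $\fp$ in $G_{K,S}^{\ab}\otimes\bZ_p$ factors through $\mu_{p^\infty}(K_\fp)$, which is finite since $\fp\nmid p$. Hence $\Gamma=\Gal_{K_\infty/K}$ is precisely the maximal $\bZ_p$-torsion-free quotient of $G_{K,S}^{\ab}\otimes\bZ_p$, so both $\Gamma$ and $\Lambda=\bZ_p\llbracket\Gamma\rrbracket$ are determined.

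Second, I would recover $Y_S$ together with its $\Lambda$-module structure. The normal subgroup $N:=\Gal_{K_S/K_\infty}\subseteq G_{K,S}$ is the kernel of the canonical surjection $G_{K,S}\twoheadrightarrow\Gamma$, hence intrinsic. Since $M_{\infty,S}$ lies inside $K_S$, one recovers $Y_S=\Gal_{M_{\infty,S}/K_\infty}$ as the maximal pro-$p$ abelian quotient of $N$, equipped with its natural $\Lambda$-module structure from conjugation by $\Gamma=G_{K,S}/N$.

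The crux is to recover the submodule $V\subseteq Y_S$, and for this I would invoke the assumed Greenberg conjecture. Under it, $Y$ is $\Lambda$-pseudo-null, i.e.\ has Krull dimension at most $d-1$ where $d$ is the $\bZ_p$-rank of $\Gamma$. By contrast, each summand $\Ind_{\Gamma_\fp}^\Gamma\bZ_p(1)$ of $V$ is a cyclic $\Lambda$-module isomorphic to $\Lambda/J_\fp$, with $J_\fp=(\gamma-\chi_{\cyc}(\gamma):\gamma\in\Gamma_\fp)$; and since the Frobenius of $\fp\in S^f\sm S_p$ has infinite order in the cyclotomic $\bZ_p$-extension contained in $K_\infty$, $\Gamma_\fp$ has positive $\bZ_p$-rank, so the Krull dimension of each summand is at most $d$. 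Thus $V$ sits in $Y_S$ as a $\Lambda$-submodule of Krull dimension $\leq d$ with pseudo-null cokernel, to be identified by matching its structure against the explicit form $\bigoplus_\fp\Lambda/J_\fp$.

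The main obstacle is canonicity: submodules of $Y_S$ with pseudo-null cokernel need not be unique, differing in general by pseudo-null perturbations. To pin $V$ down canonically, one must further recover the ideals $J_\fp\subseteq\Lambda$, hence the cyclotomic character $\chi_{\cyc}\colon\Gamma\rar\bZ_p^\times$ and the collection of decomposition subgroups $\Gamma_\fp\subseteq\Gamma$ (as an unordered multiset over $\fp\in S^f\sm S_p$). The cyclotomic character is accessible from $(G_{K,S},p)$ via the Galois action on $\mu_{p^\infty}\subseteq K_S$ after the base change $K\rightsquigarrow K(\mu_p)$ already performed at the start of the proof. Verifying that the resulting structural match picks out $V$ uniquely as a submodule of $Y_S$ is the technically most delicate point.
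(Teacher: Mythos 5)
There is a genuine gap, and it lies exactly at the point you yourself flag as unresolved. You invoke the Greenberg conjecture in the form ``$Y$ is $\Lambda$-pseudo-null'', but that is not what the conjecture asserts and it is false in the present situation: Greenberg's (generalized) conjecture concerns the \emph{unramified} Iwasawa module over $K_\infty$, whereas $Y=\Gal_{M_\infty/K_\infty}$ is the module unramified outside $S_p\cup S_\infty$, which by the weak Leopoldt conjecture has positive $\Lambda$-rank (equal to $r_2(K)>0$, since $\mu_p\subseteq K$ forces $K$ to be totally imaginary for odd $p$), so it is never pseudo-null here. The way the conjecture actually enters is different: because $\mu_p\subseteq K$, the decomposition subgroups $\Gamma_\fp\subseteq\Gamma$ for $\fp\in S_p$ have $\bZ_p$-rank $\geq 2$, and then by results of Lannuzel--Nguyen Quang Do resp.\ McCallum the Greenberg conjecture is \emph{equivalent to $Y$ being $\Lambda$-torsion free}. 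Since each summand $\Ind\nolimits_{\Gamma_\fp}^{\Gamma}\bZ_p(1)$ is a torsion $\Lambda$-module (the $\Gamma_\fp$ for $\fp\in S^f\sm S_p$ are nontrivial), one gets the canonical characterization: $V$ is precisely the $\Lambda$-torsion submodule of $Y_S$, and the sequence is then determined. This intrinsic description is exactly what your argument is missing.

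Your fallback --- matching $V$ against the explicit shape $\bigoplus_\fp\Lambda/J_\fp$ --- does not repair this: a submodule with pseudo-null (or torsion) cokernel is not unique, as you note, and pinning down the ideals $J_\fp$ requires knowing the subgroups $\Gamma_\fp$ for $\fp\in S^f\sm S_p$ and the cyclotomic character on $\Gamma$, neither of which is part of the datum $(G_{K,S},p)$; reading off $\chi_{\rm cyc}$ ``from the action on $\mu_{p^\infty}\subseteq K_S$'' presupposes an identification of the field $K_S$ rather than the abstract profinite group (elsewhere in the paper $\chi_p$ is explicitly treated as additional input), and recovering the $\Gamma_\fp$ is close to the conclusion one is ultimately after. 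So as written the proposal does not prove the lemma; the missing idea is the torsion-submodule characterization of $V$. Your preliminary steps recovering $\Gamma$, $\Lambda$ and the $\Lambda$-module $Y_S$ are fine and agree with the observation the paper makes before the lemma.
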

\begin{proof}
By \cite{LN} Theorem 3.2 and as $\mu_p \subseteq K$, the $\bZ_p$-rank of $\Gamma_{\fp}$ is $\geq 2$ for $\fp \in S_p(K)$, hence the conditions of \cite{LN} Proposition 3.6 (cf. also \cite{LN} Theorem 4.4) resp. \cite{McCal} Corollary 14 are satisfied and we deduce that the Greenberg conjecture is equivalent to $Y$ being $\Lambda$-torsion free. On the other side, $\bigoplus_{\fp \in (S^f \sm S_p)(K)} \Ind\nolimits_{\Gamma_{\fp}}^{\Gamma} \bZ_p(1)$ is a torsion $\Lambda$-module, hence it is exactly the $\Lambda$-torsion submodule of $Y_S$. This determines it intrinsically by $(G_{K,S},p)$.
\end{proof}

\begin{lm}\label{lm:GKSp_determines_sharp_Sf_sm_Sp}
$(G_{K,S},p) \rightsquigarrow \sharp (S^f \sm S_p)(K)$.
\end{lm}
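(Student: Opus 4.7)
The plan is to extract the cardinality $\sharp (S^f \sm S_p)(K)$ as the minimal number of $\Lambda$-generators of $V$, which by Nakayama equals the $\bF_p$-dimension of $V/\fm_\Lambda V$, where $\fm_\Lambda = (p, T_1, \ldots, T_d)$ is the maximal ideal. Since both $\Lambda$ and the $\Lambda$-module $V$ are intrinsically determined by $(G_{K,S},p)$ (Lemma \ref{lm:GKSp_determines_V}), so is this invariant, and it only remains to identify it with the cardinality in question.

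First I would observe that for $\fp \in S^f \sm S_p$ the decomposition subgroup $\Gamma_\fp \subseteq \Gamma$ is pro-cyclic: any $\bZ_p$-extension of $K$ is unramified outside $S_p \cup S_\infty$, so $\fp$ is unramified in $K_\infty/K$ and $\Gamma_\fp$ is topologically generated by its Frobenius. Since $\Gamma$ is torsion-free, either $\Gamma_\fp = 1$ or $\Gamma_\fp \cong \bZ_p$. In the first case $\Ind_{\Gamma_\fp}^\Gamma \bZ_p(1) \cong \Lambda$ as a $\Lambda$-module. In the second case, if $\gamma_\fp$ topologically generates $\Gamma_\fp$, then $\bZ_p(1) \cong \Lambda_{\Gamma_\fp}/(\gamma_\fp - \chi(\gamma_\fp))$ as a $\Lambda_{\Gamma_\fp}$-module, and hence
\[ \Ind\nolimits_{\Gamma_\fp}^\Gamma \bZ_p(1) \cong \Lambda/(\gamma_\fp - \chi(\gamma_\fp)). \]
In particular, each summand is a cyclic $\Lambda$-module.

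The crucial point is then that the annihilator of each summand is contained in $\fm_\Lambda$: clearly $\gamma_\fp - 1 \in \fm_\Lambda$, and $\chi(\gamma_\fp) - 1 \in p\bZ_p \subseteq \fm_\Lambda$ because $\Gamma$ is pro-$p$ and the cyclotomic character lands in the pro-$p$ part of $\bZ_p^\times$ (here we use the reduction $\mu_p \subseteq K$ from the start of the proof of Theorem \ref{thm:main_boundary}). Consequently, for every $\fp \in (S^f \sm S_p)(K)$,
\[ \bigl(\Ind\nolimits_{\Gamma_\fp}^\Gamma \bZ_p(1)\bigr) \otimes_\Lambda \bF_p \cong \bF_p. \]
Since tensor product commutes with direct sums and $V$ is finitely generated over the Noetherian ring $\Lambda$ (as a submodule of the finitely generated $\Lambda$-module $Y_S$), we obtain
\[ V/\fm_\Lambda V \cong \bigoplus_{\fp \in (S^f \sm S_p)(K)} \bF_p, \]
and hence $\sharp (S^f \sm S_p)(K) = \dim_{\bF_p}(V/\fm_\Lambda V)$, which is intrinsic to $(G_{K,S},p)$.

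The only step where one must tread carefully is the verification that $\chi(\gamma_\fp) - 1 \in \fm_\Lambda$, i.e.\ that the pro-$p$ assumption on $\Gamma$ forces the cyclotomic character to land in $1+p\bZ_p$ (respectively $1+4\bZ_2$ when $p=2$); this is the only place where the reduction $\mu_p \subseteq K$ is really used, and one should also double-check the $p=2$ case separately, since there $\fm_\Lambda$ still contains $2$, so the conclusion goes through. Finite generation of $V$ is automatic from that of $Y_S$, the latter being a standard fact in multi-variable Iwasawa theory.
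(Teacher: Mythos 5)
Your proof is correct, but it takes a genuinely different route from the paper. The paper computes $\Hom_{\Gamma}(V,\bZ_p(\chi))$ for continuous characters $\chi$ of $\Gamma$ via Frobenius reciprocity, observes that its $\bZ_p$-rank is at most $\sharp(S^f\sm S_p)(K)$ with equality exactly when $\chi$ restricts to the cyclotomic character on each $\Gamma_{\fp}$, and recovers the cardinality as $\max_{\chi}\rank_{\bZ_p}\Hom_{\Gamma}(V,\bZ_p(\chi))$ (the maximum being attained by the cyclotomic character itself). You instead compute the single invariant $\dim_{\bF_p}V/\fm_\Lambda V$: after identifying each summand $\Ind_{\Gamma_\fp}^{\Gamma}\bZ_p(1)\cong\Lambda/(\gamma_\fp-\chi(\gamma_\fp))$ and checking that $\gamma_\fp-\chi(\gamma_\fp)\in\fm_\Lambda$ (which holds since $\Gamma_\fp$ is pro-$p$, so $\chi(\gamma_\fp)\in 1+p\bZ_p$ for odd $p$, and $\chi(\gamma_\fp)-1\in 2\bZ_2$ automatically for $p=2$), each summand contributes exactly one dimension. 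Both arguments rest on Lemma \ref{lm:GKSp_determines_V} and on the structure of the summands of $V$; yours avoids having to exhibit a global character of $\Gamma$ that simultaneously matches the cyclotomic character on every $\Gamma_\fp$, while the paper's avoids identifying the induced modules as explicit cyclic quotients of $\Lambda$. Two small remarks: your case $\Gamma_\fp=1$ never actually occurs (the paper notes $\Gamma_\fp\cong\bZ_p$ because $\fp\nmid p$ has open decomposition group in the cyclotomic $\bZ_p$-extension contained in $K_\infty$ — and indeed a trivial $\Gamma_\fp$ would make $V$ non-torsion, contradicting Lemma \ref{lm:GKSp_determines_V}), though your count is unaffected either way; and the appeal to finite generation of $Y_S$ is superfluous, since $V$ is visibly a finite direct sum of cyclic modules.
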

\begin{proof}
Let $\chi$ be a $\bZ_p^{\ast}$-valued (continuous) character of $\Gamma$ and $\bZ_p(\chi)$ the corresponding $\Lambda$-module. By Frobenius reciprocity we have:

\[ \Hom_{\Gamma}(V, \bZ_p(\chi)) = \bigoplus_{\fp \in (S^f \sm S_p)(K)} \Hom_{\Gamma}(\Ind\nolimits_{\Gamma_{\fp}}^{\Gamma} \bZ_p(1), \bZ_p(\chi)) 
= \bigoplus_{\fp \in (S^f \sm S_p)(K)} \Hom_{\Gamma_{\fp}} (\bZ_p(1), \bZ_p(\chi)). \]

\noindent Thus $\rank_{\bZ_p} \Hom_{\Gamma}(V, \bZ_p(\chi)) \leq {\sharp}(S^f \sm S_p)(K)$ with equality if and only if $\chi|_{\Gamma_{\fp}}$ agrees with the cyclotomic character on $\Gamma_{\fp}$. Thus 

\[ \sharp (S^f \sm S_p)(K) = \max_{\chi} \rank_{\bZ_p} \Hom_{\Gamma}(V, \bZ_p(\chi)), \]

\noindent where $\chi$ goes through all continuous $\bZ_p^{\ast}$-valued characters of $\Gamma$. This and Lemma \ref{lm:GKSp_determines_V} finish the proof.
\end{proof}

Now we can prove Theorem \ref{thm:main_boundary} using methods from \cite{Iv}. Applying Lemma \ref{lm:GKSp_determines_sharp_Sf_sm_Sp} to open subgroups $U \subseteq \Gal_{K,S}$, we obtain the numbers $\sharp (S^f \sm S_p)(L)$ for all intermediate subfields $K_S/L/K$. Let $U \subseteq \Gal_{K,S}$ be an open subgroup. Let 

\begin{equation}\label{eq:def_of_V0} 
V_0 := \bigcap_{V \subseteq U} V,
\end{equation}

\noindent where the intersection is taken over all open normal subgroups $V$ of $U$, such that $U/V$ is abelian of exponent $p$ and such that $\sharp (S^f \sm S_p)(K_S^V) = (V:U)\sharp (S^f \sm S_p)(K_S^U)$. Thus $K_S^{V_0}/K_S^U$ is the maximal abelian extension of $K_S^U$ of exponent $p$, which is unramified outside $S$ and completely split in $S^f \sm S_p$. Let $L = K_S^U$ and consider the natural surjection 
\[ t_{p,U} \colon U \tar U/V_0 = \Gal(L_S^{S^f \sm S_p}/L)^{\ab}/p. \]

As in \cite{Iv} Definition 2.1, let us say that a pro-finite group is of \emph{$p$-decomposition group} if it is a non-abelian pro-$p$ Demushkin group of rank $2$. Explicitly, such a group is necessarily isomorphic to $\bZ_p \ltimes \bZ_p$, such that the corresponding map $\bZ_p \rar \Aut(\bZ_p) \cong \bZ_p^{\ast}$ is injective. We have a variation of \cite{Iv} Proposition 3.5.

\begin{lm}\label{lm:criterion_for_being_pSyl_of_dec_group}
Let $H$ be a group of $p$-decomposition type inside $\Gal_{K,S}$. The following are equivalent:
\begin{itemize}
\item[(i)]  $H \subseteq D_{\bap}$ for some $\bap \in S^f \sm S_p$
\item[(ii)] For any $U \subseteq \Gal_{K,S}$ open: $H \subseteq U \Rar H \subseteq \ker(t_{p,U} \colon U \tar U/V_0)$, where $V_0$ is as in \eqref{eq:def_of_V0}.
\end{itemize}

\noindent The prime $\bap$ in (i) is unique.
\end{lm}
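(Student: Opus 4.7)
The plan is to follow the strategy of \cite{Iv} Proposition 3.5, adapted to the present setting where primes above $p$ are excluded. By Lemma \ref{lm:GKSp_determines_sharp_Sf_sm_Sp}, the quantity $\sharp(S^f \sm S_p)(L)$ is encoded in $(\Gal_{K,S},p)$ for every finite subextension $K_S/L/K$; the numerical condition in \eqref{eq:def_of_V0} then characterizes $K_S^{V_0}/L$, with $L = K_S^U$, as the \emph{maximal} abelian elementary $p$-extension of $L$ that is unramified outside $S$ and totally split at every prime in $(S^f \sm S_p)(L)$. This identification will be used systematically in both directions.

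For the implication $(i) \Rightarrow (ii)$, fix an open $U \supseteq H$, put $L = K_S^U$, and let $\fp_L := \bap|_L \in (S^f \sm S_p)(L)$. The splitting property above forces $D_{\bap/\fp_L} \subseteq \Gal(K_S/K_S^{V_0}) = V_0$. Since $H \subseteq D_{\bap} \cap U = D_{\bap/\fp_L}$, we conclude $H \subseteq V_0 = \ker(t_{p,U})$.

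For the converse $(ii) \Rightarrow (i)$, the argument proceeds via class field theory at each finite level. For $L = K_S^U$, the quotient $U/V_0 = \Gal(K_S^{V_0}/L)$ is described as a quotient of the elementary $p$-part of an appropriate $S$-idele class group of $L$ in which the contributions from the completions at all primes in $(S^f \sm S_p)(L)$ have been killed. The assumption $H \subseteq V_0$ then translates to the statement that the image of $H^{\ab}$ in $U^{\ab}/p$ is supported at a \emph{single} prime $\fp_L \in (S^f \sm S_p)(L)$; the crucial input is that $H$ is a non-abelian Demushkin group of rank $2$, so $H^{\ab}/p$ is two-dimensional over $\bF_p$ with a strong internal relation that prevents the image from being distributed across several distinct local places. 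Hence $H \subseteq D_{\fp_L, K_S/L}$ for some such $\fp_L$. These local choices are automatically compatible under field restriction (enlarging $U$ can only coarsen the support), so passing to the inverse limit by compactness of the set of primes of $K_S$ lying above $(S^f \sm S_p)(K)$ produces a prime $\bap \in S^f \sm S_p$ with $H \subseteq D_{\bap}$. Uniqueness is immediate: distinct primes $\bap \neq \bap'$ of $K_S$ over $S^f \sm S_p$ separate at some finite level $L$, and at that level $D_{\bap/\fp_L} \cap D_{\bap'/\fp'_L}$ has infinite index in each factor, contradicting the rank-$2$ non-abelian structure of $H$.

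The principal obstacle is the concentration step in $(ii) \Rightarrow (i)$: proving that $H \subseteq V_0$ for every open $U \supseteq H$ really does force $H$ into the decomposition subgroup of a single prime at each finite level, rather than allowing the image of $H^{\ab}$ to spread non-trivially across several primes in $(S^f \sm S_p)(L)$. This is where the full force of the $p$-decomposition-type hypothesis (non-abelian Demushkin of rank $2$) must be used, in direct analogy with the corresponding technical core of \cite{Iv} Proposition 3.5, now carried out on the complement of $S_p$ inside $S^f$.
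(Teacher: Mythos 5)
The direction (i) $\Rar$ (ii) and the uniqueness argument are fine and essentially what the paper does (uniqueness ultimately rests on the non-openness of intersections of $p$-Sylow subgroups of distinct $D_{\bap}$, $\bap \in S^f \sm S_p$, i.e.\ \cite{Iv} Proposition 2.4, which you should cite rather than assert). The problem is the converse, and it is a genuine gap, which you yourself flag as ``the principal obstacle'' without closing it. First, your translation of the hypothesis is incorrect: $H \subseteq V_0$ for a fixed $U$ only says that the image of $H$ in $U^{\ab}/p$ lies in the subgroup generated by the images of the decomposition groups of \emph{all} primes in $(S^f \sm S_p)(L)$ (together with the splitting/ramification conditions defining $V_0$); it does not say this image is ``supported at a single prime'' --- that is the conclusion to be proved, not a restatement of the hypothesis. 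Second, the claimed concentration at a fixed finite level cannot be extracted from the Demushkin structure in the way you suggest: the image of $H$ in the abelian exponent-$p$ quotient $U^{\ab}/p$ is at most two-dimensional over $\bF_p$ and retains no trace of the non-abelian relation, so nothing prevents it from spreading over two distinct local contributions at a single level. The quantifier ``for all open $U \supseteq H$'' must be used, and your sketch never explains how.

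The paper exploits hypothesis (ii) quite differently. Since for every open $U \supseteq H$ the field $K_S^{V_0}$ contains the maximal unramified abelian exponent-$p$ extension of $K_S^U$ which is completely split in $S$, and since (ii) guarantees $H \subseteq V_0$ (so that $K_S^{V_0}$ again belongs to the directed system of fields $K_S^{U'}$ with $U' \supseteq H$ open), the $S$-version of the principal ideal theorem (capitulation, as in the proof of (a) $\Rar$ (c) of \cite{Iv} Proposition 3.5) yields
\[ \dirlim_{H \subseteq U \subseteq \Gal_{K,S}} \Cl_S(K_S^U)/p = 0, \]
and it is this vanishing in the limit, fed into the remaining argument of \cite{Iv} Proposition 3.5, that forces $H$ into the decomposition group of a single prime of $K_S$ outside $S_p$. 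Your proposal never invokes the principal ideal theorem or the vanishing of this direct limit, so the mechanism by which (ii) actually enters is missing, and the appeal to ``direct analogy with \cite{Iv} Proposition 3.5'' does not substitute for it, because the class-field-theoretic input that analogy requires has not been established in your setting.
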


\begin{proof}
(i) $\Rar$ (ii): Assume $H \subseteq D_{\bap}$ for some $\bap \in S^f \sm S_p$. By construction, $K_S^{V_0}/K_S^U$ is completely decomposed in $\bap|_{K_S^U}$. Thus $H \subseteq V_0$.

\noindent (ii) $\Rar$ (i): Let $H$ be given and let $H \subseteq U \subseteq \Gal_{K,S}$ be an open subgroup. Let $V_0$ be as in \eqref{eq:def_of_V0}. Then $K_S^{V_0}/K_S^U$ contains the maximal unramified abelian extension of $K_S^U$ of exponent $p$, which is completely split in $S$. Thus $S$-version of the principal ideal theorem (as in the proof of the (a) $\Rar$ (c)-part of \cite{Iv} Proposition 3.5) implies:

\[ \dirlim_{H \subseteq U \subseteq \Gal_{K,S}} \Cl_S(K_S^U)/p = 0. \]

\noindent Now the proof can be finished as in \cite{Iv} Proposition 3.5.
\end{proof}

For $\bap \in (S^f \sm S_p)(K_S)$, the $p$-Sylow subgroups of $D_{\bap}$ are exactly the subgroups of $p$-decomposition type. Thus by Lemmas \ref{lm:GKSp_determines_sharp_Sf_sm_Sp} and \ref{lm:criterion_for_being_pSyl_of_dec_group} we obtain for any open subgroup $U \subseteq \Gal_{K,S}$:

\[ (\Gal_{K,S}, p) \rightsquigarrow \Syl_p(U,S^f \sm S_p) := \left\{ H \subseteq U \colon \begin{array}{cl} \text{$H$ is a $p$-Sylow subgroup of $D_{\bap,K_S/K_S^U}$ with $\bap \in S^f \sm S_p$}   \end{array} \right\} \]

\noindent Now the proof of Theorem \ref{thm:main_boundary} can be finished as in \cite{Iv} Section 3.4. 
\end{proof}


\section{Invariants of infinite Galois towers of number fields} \label{sec:some_invs_of_infinite_towers}

\subsection{Preliminaries: Tsfasman-Vl\u{a}du\c{t} theorem} \label{sec:prelims_TV_thm} \mbox{}

We will make use of the generalized Brauer-Siegel theorem proven by Tsfasman-Vl\u{a}du\c{t} \cite{TV} and extended further by Zykin \cite{Zy}, so let us recall their results. Tsfasman and Vl\u{a}du\c{t} work with asymptotically exact \emph{families} of number fields. We need only the special case of infinite number fields (equivalently, of towers of number fields). Let $\cK$ be an infinite number field, i.e., an algebraic extension of $\bQ$ of infinite degree. We can choose a tower 

\[ \bQ \subsetneq K_0 \subsetneq K_1 \subsetneq K_2 \subsetneq \dots \subsetneq \cK \] 

\noindent of subfields, finite over $\bQ$, such that $\cK = \bigcup_{n=1}^{\infty} K_n$. Such towers are clearly not unique. All results in this section (and also later on) depend only on $\cK$, not on the choice of the tower. For a number field $K$, set
\begin{itemize}
\item $\Phi_{\alpha}(K) := \{ \fp \in \Sigma_K \colon \N\fp = \alpha \}$ if $\alpha$ is a rational prime power
\item $\Phi_{\alpha}(K)$ is the set of real resp. complex primes of $K$, if $\alpha = \bR$ resp. $\bC$.
\end{itemize}

For $\cK,K_n$ as above and $\alpha$ either a rational prime power or $\bR$ or $\bC$, define 

\[ \phi_{\alpha}(\cK) := \lim_{n \rar \infty} \frac{\sharp\Phi_{\alpha}(K_n)}{g_{K_n}}. \]

\noindent Then \cite{TV} Lemma 2.4 shows that this limit exists for each $\alpha$ (i.e., $\{K_n\}_n$ is an \emph{asymptotically exact} family). Moreover, \cite{TV} Lemma 2.5 shows that these limits depend only on $\cK$ and $\alpha$ and not on the choice of the tower. Further, $\cK$ is called \emph{asymptotically good}, if there exists an $\alpha$ such that $\phi_{\alpha}(\cK) \neq 0$, and \emph{asymptotically bad} otherwise. Also define

\[ \BS(\cK) := \lim_{n \rar \infty} \frac{\log (h_{K_n}\Regul_{K_n}) }{g_{K_n}}. \]

\noindent (We will see in a moment that this notation makes sense). A finite extension $L/K$ is called \emph{almost normal}, if there are subextensions $L = K_n \supseteq K_{n-1} \supseteq \dots \supseteq K_0 = K$, such that $K_{i+1}/K_i$ is normal. An infinite extension $\cK/K$ is called almost normal, if it is the limit of a tower of finite almost normal extensions. The following generalization of the classical Brauer-Siegel theorem is shown by Tsfasman-Vl\u{a}du\c{t} in the asymptotically good case and also under GRH and then by Zykin in the remaining asymptotically bad unconditional case.

\begin{thm}[\cite{TV} GRH Corollary D, Corollary F; \cite{Zy} Corollary 3] \label{thm:TVZ} Let $\cK$ be an infinite number field and let $\{K_n\}_n$ be a tower of subextensions of $\cK$, such that $K_n \subsetneq K_{n+1}$ for all $n$ and $\cK = \bigcup_{i=0}^{\infty} K_n$. Assume that at least one of the following holds:
\begin{itemize}
\item each $K_n$ is almost normal over $\bQ$, or
\item generalized Riemann hypothesis holds for each $K_n$.
\end{itemize}

\noindent Then $\BS(\cK)$ exists, depends only on $\cK$, not on $K_n$'s and 
\[ \BS(\cK) = 1 + \sum_q \phi_q(\cK) \log\frac{q}{q-1} - \phi_{\bR}(\cK)\log 2 - \phi_{\bC}(\cK)\log 2\pi, \]
\noindent where the sum is taken over all rational prime powers. 
\end{thm}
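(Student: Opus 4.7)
The plan is to deduce Theorem~\ref{thm:TVZ} from the analytic class number formula by a careful uniform analysis of the Dedekind zeta function $\zeta_{K_n}(s)$ near $s=1$ along the tower.

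First, starting from the class number formula
\[ \Res_{s=1}\zeta_{K_n}(s) \;=\; \frac{2^{r_1(K_n)}(2\pi)^{r_2(K_n)}\,h_{K_n}\Regul_{K_n}}{w_{K_n}\,|D_{K_n}|^{1/2}}, \]
I would take logarithms, divide by $g_{K_n}=\tfrac12 \log|D_{K_n}|$, and use $\log w_{K_n}=o(g_{K_n})$ (a crude Minkowski bound together with asymptotic exactness gives $[K_n:\bQ]\leq c\cdot g_{K_n}$, so $\log w_{K_n}=O(\log[K_n:\bQ])=o(g_{K_n})$). The archimedean gamma factors contribute precisely $-\phi_{\bR}(\cK)\log 2-\phi_{\bC}(\cK)\log 2\pi$ by the definition of $\phi_{\bR},\phi_{\bC}$, and the $|D_{K_n}|^{1/2}$ in the denominator produces the constant $1$ after normalising by $g_{K_n}$. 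The assertion then reduces to showing
\[ \lim_{n\to\infty}\frac{\log \Res_{s=1}\zeta_{K_n}(s)}{g_{K_n}} \;=\; \sum_q \phi_q(\cK)\log\frac{q}{q-1}. \]

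Second, I would attack this via the Euler product $\log\zeta_{K_n}(s)=\sum_{\fp}\sum_{m\geq 1}\tfrac{1}{m\N\fp^{ms}}$ for $\Re(s)>1$, writing $\log \Res_{s=1}\zeta_{K_n}(s)=\lim_{s\to 1^+}[\log\zeta_{K_n}(s)+\log(s-1)]$. The crucial technical step is to swap this limit with the one in $n$, which requires a uniform bound on the tail $\sum_{\N\fp\geq X}\log\N\fp/(\N\fp-1)$ as $n$ grows. Under GRH such a bound is furnished by the Weil--Mestre explicit formula applied to a suitable test function: nontrivial zeros lie on $\Re(s)=\tfrac12$ and so contribute nonnegatively, yielding an upper bound on the prime sum in terms of $g_{K_n}$ and archimedean data. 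The contribution of primes of each fixed norm $q$ converges to $\phi_q(\cK)\log\tfrac{q}{q-1}$ by the very definition of $\phi_q$.

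For the unconditional almost-normal case, which is Zykin's contribution, I would replace the GRH input by effective lower bounds for $\Res_{s=1}\zeta_{K_n}$ of Stark--Odlyzko and Hoffstein--Lockhart type, whose validity relies on almost-normality to exclude a catastrophic exceptional real zero of $\zeta_{K_n}$. In the asymptotically bad regime every $\phi_\alpha$ vanishes and the target is $\BS(\cK)=1$; the effective lower bounds then pinch together with the easy upper bound $\log\Res_{s=1}\zeta_{K_n}(s)\leq g_{K_n}+o(g_{K_n})$ (obtained by evaluating the Euler product on $\Re(s)=1+1/\log g_{K_n}$) to give the result.

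The main obstacle is the \emph{uniformity} of the interchange of limits in $s$ and $n$: small-norm primes contribute the main term, while the contribution of large-norm primes must be shown uniformly negligible after dividing by $g_{K_n}$. Under GRH this is a clean zero-free region estimate; in the asymptotically bad almost-normal case it requires ruling out an exceptional Siegel zero of $\zeta_{K_n}$ in every layer of the tower, which is precisely where the almost-normality hypothesis enters essentially.
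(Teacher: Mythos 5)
The paper does not prove this statement at all: Theorem \ref{thm:TVZ} is imported verbatim from Tsfasman--Vl\u{a}du\c{t} (\cite{TV}, GRH Corollary D and Corollary F) and Zykin (\cite{Zy}, Corollary 3), so there is no internal proof to compare yours against, and reproving it is outside the scope of the paper. Your outline does follow the general strategy of those references: the class number formula reduces the claim to $\lim_n \log\bigl(\Res_{s=1}\zeta_{K_n}\bigr)/g_{K_n} = \sum_q \phi_q(\cK)\log\frac{q}{q-1}$ (your bookkeeping of the $w_{K_n}$, the gamma factors and the discriminant term is correct, using $[K_n:\bQ]=O(g_{K_n})$ from Minkowski), and the real content is the uniform control of the Euler product near $s=1$ along the tower, obtained under GRH from explicit-formula/positivity arguments and unconditionally from effective (Stark-type) lower bounds on the residue that exclude an exceptional zero for almost normal fields. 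But as written it is a plan rather than a proof: the interchange of the limits in $s$ and $n$, which you yourself flag as the main obstacle, is precisely the technical core of \cite{TV} and is only asserted, not carried out.

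There is also one concrete case gap. You treat the unconditional almost-normal branch as if it coincided with the asymptotically bad regime, where all $\phi_\alpha$ vanish, the target is $\BS(\cK)=1$, and one can pinch between a Stark--Odlyzko/Hoffstein--Lockhart lower bound and the easy upper bound from evaluating the Euler product at $\Re(s)=1+1/\log g_{K_n}$. The hypothesis that each $K_n$ is almost normal does not force asymptotic badness: the asymptotically good almost normal case (which is \cite{TV} Corollary F, also unconditional) has $\sum_q \phi_q(\cK)\log\frac{q}{q-1}>0$ and cannot be reached by pinching toward $1$; it requires the same explicit-formula analysis as the GRH branch, with the zero-free-region input replaced by Stark's effective bound on the possible real zero, which is exactly where almost normality enters. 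Finally, the statement also asserts existence of $\BS(\cK)$ and its independence of the chosen tower; the latter does follow once the limit formula is established, since the $\phi_\alpha(\cK)$ are tower-independent (\cite{TV} Lemma 2.5), but your write-up should make that step explicit.
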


Also observe that by \cite{TV} Theorem H (and GRH Theorem G) $\BS(\cK)$ is always finite. The term on the right side in Theorem \ref{thm:TVZ} is closely related with the value at $1$ of the zeta function of $\cK$ (which is defined in \cite{TV}). For an infinite number field $\cK$, satisfying one of the conditions from the theorem, we define:

\begin{equation} \label{eq:lambda_of_tower}
\lambda(\cK) := \BS(\cK) - 1 + \phi_{\bR}(\cK)\log 2 + \phi_{\bC}(\cK)\log 2\pi = \sum_q \phi_{q}(\cK) \log \frac{q}{q-1}.
 \end{equation}



For an infinite extension $\cK/K$ with $\cK = \bigcup_{n > 0} K_n$ and $K_{n+1} \supseteq K_n \supseteq K$ for all $n > 0$, define:

\[ \mu(\cK/K) := \lim\limits_{n \rar \infty} \frac{[K_n:K]}{g_{K_n}}. \] 


\noindent The following two lemmas below are closely related to \cite{HM} Lemma 5, Definition 6 and the invariant $\mu$ is related to the root discriminant ${\rm rd}_K := |D_{K/\bQ}|^{\frac{1}{[K:\bQ]}}$. 

\begin{lm} \label{lm:mu_is_strictly_pos_in_tame_towers}
Let $K$ be a number field and $\cK/K$ an infinite Galois extension. Let $(K_n)_{n=0}^{\infty}$ be a tower corresponding to $\cK/K$, consisting of Galois subextensions with $K = K_0$. Then 
\[ \mu(\cK/K) = \lim\limits_{n \rar \infty} \frac{[K_n:K]}{g_{K_n}} \geq 0 \] 

\noindent exists, is finite and depends only on $\cK/K$, not on the choice of the tower. If moreover, $\cK/K$ is unramified outside a finite set $S$ of primes of $K$ and only tamely ramified in $S$, then $\mu(\cK/K) > 0$.
\end{lm}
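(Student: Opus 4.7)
The plan is to combine two classical ingredients: the multiplicativity of the discriminant in towers, which forces the quantities $g_L/[L:K]$ to be non-decreasing along any finite tower, and the explicit shape of the different in the tame case, which gives a linear upper bound on $g_{K_n/K}$ in terms of $[K_n:K]$.

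For existence, finiteness, and tower-independence of $\mu(\cK/K)$, I would start from the standard tower formula
\[ |D_{L/\bQ}| = |D_{M/\bQ}|^{[L:M]} \cdot |N_{M/\bQ}(D_{L/M})| \]
applied to $L \supseteq M \supseteq K$, which, since $D_{L/M}$ is an integral ideal of $\fo_M$, yields $g_L/[L:K] \geq g_M/[M:K]$. In particular the sequence $[K_n:K]/g_{K_n}$ is non-increasing in $n$, so the limit $\mu(\cK/K)$ exists in $[0,\infty]$. For $K \neq \bQ$ it is bounded above by $1/g_K$, while for $K = \bQ$ one invokes Minkowski's theorem ($|D_L| > 1$ for $L \neq \bQ$) to see that $g_{K_n} > 0$ eventually, from which the same monotonicity gives finiteness. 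Tower-independence is then immediate from the observation that the argument actually identifies
\[ \mu(\cK/K) = \inf_L \frac{[L:K]}{g_L}, \]
where $L$ ranges over all finite subextensions of $\cK/K$: each such $L$ sits inside some $K_n$ (which, by monotonicity, forces $[K_n:K]/g_{K_n} \leq [L:K]/g_L$), and the $K_n$'s are among the admissible $L$'s. The right-hand side is manifestly intrinsic to $\cK/K$.

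For the strict positivity under the tameness hypothesis, I would bound $\fd_{K_n/K}$ explicitly. Assuming $\cK/K$ is unramified outside a finite set $S$ and tamely ramified at each $\fp \in S^f$, the different $\fd_{K_n/K}$ of a finite Galois subextension $K_n/K$ is supported on primes above $S^f$, and at $\bap \mid \fp$ with ramification index $e_\fp$ and residue degree $f_\fp$ the tame formula gives $v_{\bap}(\fd_{K_n/K}) = e_\fp - 1$. A direct count using $\sum_{\bap \mid \fp} e_\fp f_\fp = [K_n:K]$ yields
\[ g_{K_n/K} = \tfrac{1}{2}\log |N_{K/\bQ}(D_{K_n/K})| \leq [K_n:K]\, C_S, \qquad C_S := \tfrac{1}{2}\sum_{\fp \in S^f} \log \N\fp, \]
a constant depending only on $K$ and $S$. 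Combined with the tower identity $g_{K_n} = [K_n:K]\, g_K + g_{K_n/K}$, this gives $g_{K_n}/[K_n:K] \leq g_K + C_S$, and therefore
\[ \mu(\cK/K) \geq \frac{1}{g_K + C_S} > 0. \]

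I do not anticipate a serious obstacle: the lemma is a clean consequence of the standard tower formulas for discriminants and the explicit shape of the tame different. The only mild care required is when $K = \bQ$, in which case one must invoke Minkowski's nonexistence of nontrivial unramified extensions of $\bQ$ in order to conclude finiteness of $\mu$.
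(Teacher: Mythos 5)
Your proof is correct and takes essentially the same route as the paper: the tame-different count giving $g_{K_n}/[K_n:K] \le g_K + \tfrac12\sum_{\fp \in S^f}\log \N\fp$ is precisely the content of the paper's Lemma \ref{lm:disccomp_1}, from which the paper deduces this lemma. Your monotonicity/cofinality argument for existence, finiteness and tower-independence merely makes explicit what the paper leaves implicit in that reference.
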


\begin{proof} 
This follows from the last statement of Lemma \ref{lm:disccomp_1} below.
\end{proof}

\begin{lm} \label{lm:disccomp_1}
Let $L/K$ be a finite Galois extension, which is unramified outside $S_{\infty}$ a set $S^f = \{\fp_1, \dots, \fp_r\}$ of primes of $K$, and let $\fp_i\caO_{L} = \prod_{j=1}^{g_i} \fP_{ij}^{e_i}$, such that $[L:K] = e_i f_i g_i$, where $f_i$ is the inertia degree of $\fP_{ij}/\fp_i$ for some (any) $j$. Then 

\[ N_{K/\bQ}D_{L/K} = \prod_{i = 1}^r \N\fp_i^{[L:K](1 - \frac{1}{e_i} + \frac{\beta_i}{e_i} )}, \] 

\noindent with some $\beta_i \geq 0$ and $\beta_i = 0 \LRar L/K$ is tamely ramified in $\fp_i$. Moreover, one has

\[ \frac{g_L}{[L:K]} = g_K + \frac{1}{2} \sum_{i = 1}^r (1 - \frac{1}{e_i} + \frac{\beta_i}{e_i}) \log \N\fp_i.  \] 
\end{lm}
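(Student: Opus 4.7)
The plan is a bookkeeping computation via the local–global decomposition of the different and the tower formula for discriminants.

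First, I would write the different $\mathfrak{d}_{L/K} = \prod_{i,j} \fP_{ij}^{d_{ij}}$ as a product over the primes of $\caO_L$ lying above the $\fp_i$. Since $L/K$ is Galois, the Galois group acts transitively on $\{\fP_{ij}\}_j$ for fixed $i$, and the different is a Galois-equivariant ideal; hence the exponent $d_{ij}$ depends only on $i$. Write $d_i := d_{ij}$. The classical computation of the exponent of the different (see e.g.\ Serre, \emph{Local Fields}, III \S6) gives $d_i \geq e_i - 1$, with equality if and only if $L/K$ is tamely ramified at $\fp_i$; so $d_i = e_i - 1 + \beta_i$ with $\beta_i \geq 0$ and $\beta_i = 0 \LRar$ tame. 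Primes $\fp \notin S^f$ contribute nothing since $e_\fp = 1$ and $d_\fp = 0$ there.

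Next, I would apply $N_{L/K}$ to compute the (relative) discriminant. Using $N_{L/K}(\fP_{ij}) = \fp_i^{f_i}$ and $e_i f_i g_i = [L:K]$ one gets
\[
D_{L/K} \;=\; N_{L/K}(\mathfrak{d}_{L/K}) \;=\; \prod_i \fp_i^{f_i g_i d_i} \;=\; \prod_i \fp_i^{[L:K]\left(1 - \frac{1}{e_i} + \frac{\beta_i}{e_i}\right)}.
\]
Applying $N_{K/\bQ}$ and using $N_{K/\bQ}(\fp_i) = (\N\fp_i)$ yields the first claimed formula.

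For the formula for $g_L/[L:K]$, I would invoke the tower formula for discriminants, $D_{L/\bQ} = N_{K/\bQ}(D_{L/K})\cdot D_{K/\bQ}^{[L:K]}$, take logarithms of absolute values, recall $2g_F = \log|D_{F/\bQ}|$ for any number field $F$, and substitute the formula for $N_{K/\bQ} D_{L/K}$ derived above; dividing by $2[L:K]$ gives the desired identity.

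There is no real obstacle here: every ingredient is standard (local different exponent, norms of primes in towers, tower formula for discriminants), and Galois-invariance of $d_{ij}$ in $j$ is what allows the clean formula. The only small subtlety is tracking that primes outside $S^f \cup S_\infty$ contribute $0$ and that archimedean primes do not enter the discriminant, so that the sum may indeed be restricted to $i = 1, \dots, r$.
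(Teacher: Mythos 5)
Your proposal is correct and follows essentially the same route as the paper: valuation of the different at each $\fP_{ij}$, norming down to $K$ and then to $\bQ$ using $e_if_ig_i=[L:K]$, and the tower formula $D_{L/\bQ}=D_{K/\bQ}^{[L:K]}\N_{K/\bQ}D_{L/K}$ with logarithms. The only cosmetic difference is that the paper defines $\beta_i$ explicitly as $\sum_{u\geq 1}(\sharp D_{\fP_{ij}/\fp_i,u}-1)$ via the higher ramification filtration (Serre, Ch.~IV, Prop.~4), whereas you take $\beta_i:=d_i-(e_i-1)$ and quote the tameness criterion for the different exponent, which amounts to the same thing.
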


\begin{proof} Let $\delta_{L/K}$ be the different of $L/K$. Then the $\fP_{ij}$-valuation of $\delta_{L/K}$ is $e_i - 1 + \sum_{u=1}^{\infty} (\sharp D_{\fP_{ij}/\fp_i, u} - 1)$, where $D_{\fP_{ij}/\fp_i, u}$ denote the higher ramification subgroups in the lower numbering (\cite{Se} Chap. IV Proposition 4). Let $\beta_i := \sum_{u=1}^{\infty} (\sharp D_{\fP_{ij}/\fp_i, u} - 1)$. As $D_{L/K} = \N_{L/K}\delta_{L/K}$, we get

\[ \N_{K/\bQ}D_{L/K} = \N_{K/\bQ}\N_{L/K}\delta_{L/K} = \N_{L/\bQ}\delta_{L/K} = \prod_{i=1}^r \N\fp_i^{g_i f_i (e_i - 1 + \beta_i)} = \prod_{i=1}^r \N\fp_i^{[L:K](1 - \frac{1}{e_i} + \frac{\beta_i}{e_i} )}. \]

\noindent It is clear that $\beta_i = 0 \LRar L/K$ is tamely ramified in $\fp_i$. Now taking the logarithm of the formula $D_{L/\bQ} = D_{K/\bQ}^{[L:K]}\N_{K/\bQ}D_{L/K}$ gives the last statement of the lemma.
\end{proof}


\subsection{Relative version of the invariants}\label{sec:rel_versions_of_invariants} \mbox{}

Let $L/K$ be a finite extension. Having no chance to control neither $D_{K/\bQ}$, nor the wild part of $D_{L/K}$, we are still able to control the tame part of the norm of the relative discriminant $|N_{K/\bQ} D_{L/K}|$ (by 'control' we mean, that the information is encoded in the Galois groups). Therefore, we introduce relative versions of invariants from Section \ref{sec:prelims_TV_thm}. For $L/K$ finite and ramified, set 

\[g_{L/K} := \log |\N_{K/\bQ} D_{L/K}|^{\frac{1}{2}}. \]

\noindent Then $g_L = g_{L/\bQ} = [L:K]g_K + g_{L/K}$. Assume now the tower $\cK = \bigcup_n K_n/K$ is ramified, i.e., $g_{K_n/K} > 0$ for $n \gg 0$). We define:

\begin{eqnarray*} 
\mu_{\rel}(\cK/K) &:=& \lim_n \frac{[K_n:K]}{g_{K_n/K}} \\
\phi_{\alpha,\rel}(\cK/K) &:=& \lim_n \frac{\sharp\Phi_{\alpha}(K_n)}{g_{K_n/K}} \\
\BS_{\rel}(\cK/K) &:=& \lim_n \frac{\log(h_{K_n}\Regul_{K_n})}{g_{K_n/K}}  \\
\lambda_{\rel}(\cK/K) &:=& \BS_{\rel}(\cK/K) - 1 + \phi_{\bR,\rel}(\cK/K) \log 2 + \phi_{\bC,\rel}(\cK/K) \log 2\pi.
\end{eqnarray*}

\begin{lm}\label{lm:rel_version_vs_absol_version} Let $\cK/K$ be a ramified tower. Then
\begin{itemize} 
\item[(i)]   $\mu_{\rel}(\cK/K), \phi_{\alpha,\rel}(\cK/K), \BS_{\rel}(\cK/K)$ and $\lambda_{\rel}(\cK/K)$ exist and are independent of the choice of the $K_n$'s.
\item[(ii)] One has $\mu_{\rel}(\cK/K) < \infty$ and $\mu(\cK/K) = 0 \LRar \mu_{\rel}(\cK/K) = 0$. 
\item[(iii)] For $\ast \in \{ \mu,\phi_{\alpha},\BS \}$ one has: 

\[ \ast_{\rel}(\cK/K) =  \ast(\cK/K) (1 + g_K \mu_{\rm rel}(\cK/K)).\] 


\end{itemize}
\end{lm}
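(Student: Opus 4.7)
My plan is to derive (i), (ii), and (iii) simultaneously from the relation
\[ g_{K_n} = [K_n:K]\,g_K + g_{K_n/K} \]
(recorded in the introduction and following from Lemma \ref{lm:disccomp_1}), which for any sequence $(X_n)$ rewrites as
\[ \frac{X_n}{g_{K_n/K}} \;=\; \frac{X_n/g_{K_n}}{1 - [K_n:K]\,g_K/g_{K_n}}. \]
Plugging in $X_n \in \{[K_n:K],\;\sharp\Phi_\alpha(K_n),\;\log(h_{K_n}\Regul_{K_n})\}$ reduces the existence of each relative invariant to that of the corresponding absolute one (known from Lemma \ref{lm:mu_is_strictly_pos_in_tame_towers}, \cite{TV} Lemma 2.4 and Theorem \ref{thm:TVZ}) together with strict positivity of the limit of the denominator.

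The hard step, and the main obstacle, is to show that this denominator does not degenerate, i.e.\ $g_K\,\mu(\cK/K) < 1$, equivalently $\mu_{\rel}(\cK/K) < \infty$. I would take the $K_n$ to be Galois over $K$ (permissible since the limits are tower-independent). Since the tower is ramified, some prime $\fp$ of $K$ ramifies at every sufficiently high level, and its ramification index $e_n$ in $K_n/K$ satisfies $e_n \geq 2$ for $n \gg 0$. Lemma \ref{lm:disccomp_1} applied to $K_n/K$ then gives
\[ \frac{g_{K_n/K}}{[K_n:K]} \;\geq\; \tfrac12\Bigl(1 - \tfrac{1}{e_n}\Bigr)\log \N\fp \;\geq\; \tfrac14\log \N\fp \;>\; 0, \]
whence $\mu_{\rel}(\cK/K) \leq 4/\log \N\fp < \infty$ and $\mu(\cK/K) \leq (g_K + \tfrac14\log\N\fp)^{-1} < g_K^{-1}$ (trivially if $g_K = 0$).

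With this bound in hand, the denominator tends to $1 - g_K\mu > 0$, and the rewritten relation yields existence and tower-independence of $\ast_{\rel}$ for $\ast \in \{\mu, \phi_\alpha, \BS\}$ along with
\[ \ast_{\rel}(\cK/K) \;=\; \frac{\ast(\cK/K)}{1 - g_K\,\mu(\cK/K)}. \]
Specialising $\ast = \mu$ yields $\mu_{\rel}(1 - g_K\mu) = \mu$, i.e.\ $(1 - g_K\mu)^{-1} = 1 + g_K\mu_{\rel}$, which converts the formula into the one stated in (iii). Existence and tower-independence of $\lambda_{\rel}$ then follow as a linear combination, completing (i). Finally, (ii) is immediate from $\mu_{\rel} = \mu/(1 - g_K\mu)$ with $1 - g_K\mu > 0$: the two quantities vanish together.
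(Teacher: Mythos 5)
Your overall route is the same as the paper's: everything rests on the identity $g_{K_n}=[K_n:K]g_K+g_{K_n/K}$, and all of (i)--(iii) reduce to the absolute invariants once one knows the key finiteness $\mu_{\rel}(\cK/K)<\infty$ (equivalently $g_K\,\mu(\cK/K)<1$, equivalently $\liminf_n g_{K_n/K}/[K_n:K]>0$). Your manipulation with the denominator $1-[K_n:K]g_K/g_{K_n}$ is just the reciprocal form of the paper's computation, and your deductions of (ii), (iii), the case $\ast=\BS,\phi_\alpha$, tower-independence, and $\lambda_{\rel}$ are fine once that finiteness is in place.

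The gap is in how you establish the finiteness. You ``take the $K_n$ to be Galois over $K$, permissible since the limits are tower-independent.'' Tower-independence (\cite{TV} Lemma 2.5) only allows replacing one chain $(K_n)$ by another chain of subfields of the \emph{same} field $\cK$; it does not make $\cK/K$ normal. The lemma is stated for an arbitrary ramified tower, and if $\cK/K$ is not normal there is no exhausting chain of subfields Galois over $K$, so your appeal to Lemma \ref{lm:disccomp_1} is unavailable: that lemma is stated, and genuinely needed, for Galois extensions, since it is normality that forces \emph{every} prime above $\fp$ to ramify and hence makes the discriminant exponent proportional to $[K_n:K]$. For a non-normal $K_n/K$ the ramification over $\fp$ may be concentrated in a single prime of bounded ramification index, and the uniform bound $g_{K_n/K}/[K_n:K]\geq \tfrac14\log\N\fp$ has no direct analogue. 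The fix --- and this is what the paper does --- avoids ramification theory: by transitivity of the different/discriminant, $g_{K_n/K}=[K_n:K_{n_0}]\,g_{K_{n_0}/K}+g_{K_n/K_{n_0}}\geq [K_n:K_{n_0}]\,g_{K_{n_0}/K}$, hence $g_{K_n/K}/[K_n:K]\geq g_{K_{n_0}/K}/[K_{n_0}:K]>0$ for all $n\geq n_0$ with $K_{n_0}/K$ ramified, valid for an arbitrary tower. With this replacement (or with an added hypothesis that $\cK/K$ is Galois, as in Lemma \ref{lm:mu_is_strictly_pos_in_tame_towers}) your argument closes and then coincides in substance with the paper's proof.
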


\begin{proof} For $n \gg 0$, $g_{K_n/K}$ is defined and we have: 

\[ \frac{g_{K_n}}{[K_n:K]} = \frac{g_{K_n/K} + [K_n:K]g_K}{[K_n:K]} = \frac{g_{K_n/K}}{[K_n:K]} + g_K. \]

\noindent As for $n \rar \infty$ the limit on the left side exists and is $> 0$, also the limit on the right side exists and is $> 0$. Take $n_0$ big enough such that $K_{n_0}/K$ is ramified, then a similar computation shows that $\frac{g_{K_n/K}}{[K_n:K]} \geq g_{K_{n_0}/K}$, in particular the limit of $\frac{g_{K_n/K}}{[K_n:K]}$ is $> 0$. Going to this limit we obtain:

\begin{equation}\label{eq:mu_and_mu_rel}
\mu(\cK/K)^{-1} = \mu_{\rel}(\cK/K)^{-1} + g_K.
\end{equation}

\noindent The left hand side is $\infty$ if and only if the right hand side is. Thus $\mu(\cK/K) = 0 \LRar \mu_{\rel}(\cK/K) = 0$. This completes the proof of (ii). Also (iii) for $\ast = \mu$ follows from \eqref{eq:mu_and_mu_rel}. Further, as $\mu(\cK/K)$ depends only on $\cK/K$, not on $K_n$'s, the same holds for $\mu_{\rel}(\cK/K)$. We compute:

\[ (1 + g_K \mu_{\rel}(\cK/K)) \phi_{\alpha}(\cK/K) = (1 + g_K \lim_n \frac{[K_n:K]}{g_{K_n/K}} ) \lim_n \frac{\sharp \Phi_{\alpha}(K_n)}{g_{K_n}} = \lim_n \frac{\sharp \Phi_{\alpha}(K_n)}{g_{K_n/K}} = \phi_{\alpha, \rel}(\cK/K). \]

\noindent As the limits in the second term exist and are independent of the choice of the tower, the same holds true for the limit defining $\phi_{\alpha,\rel}$. Analogously, we deduce existence and independence of the $K_n$'s of $\BS_{\rel}$ and part (iii) for $\ast = \BS$. Now, existence of $\lambda_{\rel}$ and its independence of the choice of $K_n$'s follows then from the definition of $\lambda_{\rel}$. This finishes the proof of the lemma. \qedhere
\end{proof}

For a Galois tower $\cK/K$ and for $\alpha$ either a rational prime power or $\bR,\bC$ define

\[ \psi_{\alpha}(\cK/K) := \lim\limits_{n \rar \infty} \frac{\sharp\Phi_{\alpha}(K_n)}{[K_n:K]}. \]

\noindent Assume an infinite tower $\cK/K$ is given with $0 < \mu_{\rel}(\cK/K) < \infty$. Then we have:

\[ \psi_{\alpha}(\cK/K) = \lim\limits_{n \rar \infty} \frac{\sharp\Phi_{\alpha}(K_n)}{[K_n:K]} = \left( \lim_{n \rar \infty} \frac{\sharp\Phi_{\alpha}(K_n)}{g_{K_n/K}} \right) \cdot \left( \lim\limits_{n \rar \infty} \frac{[K_n:K]}{g_{K_n/K}} \right)^{-1} = \frac{\phi_{\alpha,\rel}(\cK/K)}{\mu_{\rel}(\cK/K)}  \]

\noindent (in particular, $\psi_{\alpha}(\cK/K)$ exists and is independent of the choice of $K_n$'s in this case). 

\begin{prop} \label{prop:lamdba_mu_rel_ratio} 
Let $\cK/K$ be a tower with $0 < \mu_{\rel}(\cK/K) < \infty$. Then:

\begin{equation} \label{eq:lamdba_mu_rel_ratio} 
\frac{\lambda_{\rel}(\cK/K)}{\mu_{\rel}(\cK/K)} = g_K + \sum_q \psi_q(\cK/K) \log\frac{q}{q-1}. 
\end{equation}

\noindent Moreover, both sides are non-negative and $< \infty$.
\end{prop}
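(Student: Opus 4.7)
The plan is to reduce everything to the absolute invariants of $\cK$ (for which we have Theorem \ref{thm:TVZ} at our disposal) via the translation formulas in Lemma \ref{lm:rel_version_vs_absol_version}, and then extract $\psi_q$ from $\phi_q$ by dividing out $\mu$. The hypothesis $0 < \mu_{\rel}(\cK/K) < \infty$ is exactly what makes this division legitimate; via equation \eqref{eq:mu_and_mu_rel} it is equivalent to $0 < \mu(\cK/K) < \infty$, so both $\psi_\alpha(\cK/K) = \phi_\alpha(\cK)/\mu(\cK/K)$ and the reciprocal manipulations below are well-defined.

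First I would rewrite $\lambda_{\rel}$ purely in terms of the absolute invariant $\lambda(\cK)$. Starting from the definition and applying Lemma \ref{lm:rel_version_vs_absol_version}(iii) to each of $\BS$, $\phi_{\bR}$, $\phi_{\bC}$, one factors out the common multiplier $1 + g_K\mu_{\rel}(\cK/K)$:
\begin{align*}
\lambda_{\rel}(\cK/K) &= \BS_{\rel}(\cK/K) - 1 + \phi_{\bR,\rel}(\cK/K)\log 2 + \phi_{\bC,\rel}(\cK/K)\log 2\pi \\
&= (1 + g_K\mu_{\rel}(\cK/K))\bigl[\BS(\cK) + \phi_{\bR}(\cK)\log 2 + \phi_{\bC}(\cK)\log 2\pi\bigr] - 1 \\
&= (1 + g_K\mu_{\rel}(\cK/K))(\lambda(\cK) + 1) - 1 \\
&= (1 + g_K\mu_{\rel}(\cK/K))\lambda(\cK) + g_K\mu_{\rel}(\cK/K).
\end{align*}
Dividing by $\mu_{\rel}(\cK/K)$ and using $\mu_{\rel}(\cK/K)^{-1} + g_K = \mu(\cK/K)^{-1}$ from \eqref{eq:mu_and_mu_rel} produces the clean identity $\lambda_{\rel}(\cK/K)/\mu_{\rel}(\cK/K) = \lambda(\cK)/\mu(\cK/K) + g_K$.

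Next I would invoke Theorem \ref{thm:TVZ} to replace $\lambda(\cK)$ by $\sum_q \phi_q(\cK)\log\frac{q}{q-1}$ (this is exactly formula \eqref{eq:lambda_of_tower}), and then convert each $\phi_q(\cK)$ to $\psi_q(\cK/K)$ using $\phi_q(\cK) = \mu(\cK/K)\psi_q(\cK/K)$, which is immediate from the definitions via the identity $\sharp\Phi_q(K_n)/g_{K_n} = (\sharp\Phi_q(K_n)/[K_n:K]) \cdot ([K_n:K]/g_{K_n})$. This yields precisely \eqref{eq:lamdba_mu_rel_ratio}.

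Finally, for the last assertion: the right-hand side of \eqref{eq:lamdba_mu_rel_ratio} is manifestly non-negative, since $g_K \geq 0$, $\psi_q(\cK/K) \geq 0$ and $\log\frac{q}{q-1} > 0$; hence so is the left-hand side. Finiteness I would derive from \cite{TV} Theorem H (noted after Theorem \ref{thm:TVZ}), which gives $\BS(\cK) < \infty$ unconditionally, together with the obvious bounds $\phi_{\bR}(\cK), \phi_{\bC}(\cK) < \infty$, so that $\lambda(\cK) < \infty$; combined with $\mu(\cK/K) > 0$ this forces $\sum_q \psi_q(\cK/K)\log\frac{q}{q-1} = \lambda(\cK)/\mu(\cK/K) < \infty$. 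The main (and essentially only) subtle point is checking that the telescoping cancellation $\mu_{\rel}^{-1} + g_K = \mu^{-1}$ is applicable; this requires $\mu_{\rel} \in (0,\infty)$, which is exactly the standing hypothesis of the proposition.
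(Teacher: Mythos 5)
Your proposal is correct and follows essentially the same route as the paper: factor out $(1+g_K\mu_{\rel}(\cK/K))$ via Lemma \ref{lm:rel_version_vs_absol_version}(iii), invoke Theorem \ref{thm:TVZ} (i.e.\ \eqref{eq:lambda_of_tower}), divide by $\mu_{\rel}$, and identify $\psi_q$, with finiteness coming from \cite{TV} Theorem H and non-negativity read off the right-hand side. The only cosmetic difference is that you pass to the absolute invariants $\lambda(\cK)$, $\mu(\cK/K)$ using \eqref{eq:mu_and_mu_rel} and $\psi_q=\phi_q/\mu$, whereas the paper stays with the relative quantities and uses $\psi_q=\phi_{q,\rel}/\mu_{\rel}$; the two bookkeepings are equivalent.
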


\begin{proof}
During the proof, for $\ast \in \{\mu,\BS,\phi_{\alpha},\lambda \}$, $\ast$ always means $\ast(\cK)$ and $\ast_{\rel}$ always means $\ast_{\rel}(\cK/K)$. We compute

\begin{eqnarray*}
\lambda_{\rel} + 1 &=&  \BS_{\rel} + \phi_{\bR,\rel} \log 2 + \phi_{\bC,\rel} \log 2\pi \\
&=& (1 + g_K \mu_{\rel})(\BS + \phi_{\bR}\log 2 + \phi_{\bC}\log 2\pi) \\
&=& (1 + g_K \mu_{\rel})(\lambda + 1) \\
&=& (1 + g_K \mu_{\rel}) (1 + \sum_q \phi_q \log \frac{q}{q-1}) \\
&=& 1 + g_K \mu_{\rel} + \sum_q \phi_{q,\rel} \log \frac{q}{q-1}.
\end{eqnarray*}

\noindent Subtracting $1$, dividing by $\mu_{\rel}$ and applying the definition of $\psi_q$, we deduce \eqref{eq:lamdba_mu_rel_ratio}. Now the right hand side of \eqref{eq:lamdba_mu_rel_ratio} is clearly $\geq 0$. To show that the left hand side is $< \infty$, it is enough to show that $\lambda_{\rel} < \infty$. Therefore, consider the equality 

\[ \lambda_{\rel} + 1 = (1 + g_K \mu_{\rel})(\lambda + 1), \] 

\noindent from the proof of Proposition \ref{prop:lamdba_mu_rel_ratio}. By assumption we have $1 + g_K \mu_{\rel} < \infty$. Further, $\lambda < \infty$ follows from the upper bounds on $\BS$ proven in \cite{TV} Theorem H (cf. also GRH Theorem G).
\end{proof}


\subsection{Galois towers with non-vanishing $\mu_{\rel}$} \mbox{}

Now we evaluate the right hand side of \eqref{eq:lamdba_mu_rel_ratio} in terms of Galois theory and norms of primes. Note that for a Galois extension $\cK/K$ and a prime $\fp$ of $K$, the degree (and the ramification index and the inertia degree) of the local extension $\cK_{\bap}/K_{\fp}$ does not depend on the choice of the prolongation $\bap$ of $\fp$ to $\cK$. This justifies the following definition.

\begin{Def}
For a Galois extension $\cK/K$ and a prime $\fp$ of $K$ let $f_{\fp}(\cK/K)$ resp. $e_{\fp}(\cK/K)$ denote the inertia degree resp. the ramification index of a (any) prolongation of $\fp$ to $\cK$. For an infinite Galois extension $\cK/K$, we set

\[ \beta(\cK/K) := \sum_{\fp \in \Sigma_K^f} \frac{1}{e_{\fp}(\cK/K)f_{\fp}(\cK/K)} \log \frac{\N\fp^{f_{\fp}(\cK/K)}}{\N\fp^{f_{\fp}(\cK/K)} - 1 } \]

\noindent (here and later, the sums are taken only over primes with finite absolute degree $e_{\fp}f_{\fp}$ in the tower $\cK/K$).
\end{Def}

Note that all summands are positive real numbers and if the sum converges, the convergence is absolute. In the following proposition and its proof we omit $\cK/K$ from notation and write $\beta$ instead of $\beta(\cK/K)$, etc.

\begin{prop} \label{prop:normal_case_invariants}
Let $\cK/K$ be an infinite Galois extension such that $0 < \mu_{\rel} < \infty$. Then

\[ \frac{\lambda_{\rel}}{\mu_{\rel}} = g_K + \beta. \]
\end{prop}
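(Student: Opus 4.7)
By Proposition \ref{prop:lamdba_mu_rel_ratio}, the ratio $\lambda_{\rel}/\mu_{\rel}$ equals $g_K + \sum_q \psi_q(\cK/K) \log\frac{q}{q-1}$, so the task reduces to identifying this sum over rational prime powers $q$ with the sum $\beta(\cK/K)$ over primes of $K$. The plan is to re-index the counting of primes of $K_n$ of norm $q$ by primes of $K$, pass to the limit in $n$, and then swap the resulting double sum.

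First I would unpack $\sharp\Phi_q(K_n)$ prime-by-prime below $\fp \in \Sigma_K^f$. Writing $e_n(\fp), f_n(\fp)$ for the ramification index and inertia degree of $\fp$ in $K_n/K$, each prime of $K_n$ above $\fp$ has norm $\N\fp^{f_n(\fp)}$, and there are $[K_n:K]/(e_n(\fp)f_n(\fp))$ of them; hence
\[
\frac{\sharp\Phi_q(K_n)}{[K_n:K]} \;=\; \sum_{\fp \mid p,\, \N\fp^{f_n(\fp)} = q} \frac{1}{e_n(\fp)f_n(\fp)},
\]
where $p$ is the rational prime underlying $q$. This is a finite sum (only the primes of $K$ over the single rational prime $p$ contribute), so taking $n \to \infty$ commutes trivially with the summation.

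Next I would analyze each term separately as $n \to \infty$, using that the sequences $e_n(\fp), f_n(\fp)$ are non-decreasing with limits $e_\fp, f_\fp \in \bN \cup \{\infty\}$. If $e_\fp f_\fp < \infty$, then $e_n(\fp) = e_\fp$ and $f_n(\fp) = f_\fp$ for $n \gg 0$, contributing $\frac{1}{e_\fp f_\fp}$ exactly when $\N\fp^{f_\fp} = q$, and nothing otherwise. If $f_\fp = \infty$, then $\N\fp^{f_n(\fp)} > q$ eventually (as $\N\fp \geq 2$) and the term dies; if $f_\fp < \infty$ but $e_\fp = \infty$, then $f_n(\fp)$ stabilizes at $f_\fp$ while $e_n(\fp) \to \infty$, so the term again vanishes in the limit. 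Consequently
\[
\psi_q(\cK/K) = \sum_{\substack{\fp \in \Sigma_K^f \\ e_\fp f_\fp < \infty,\, \N\fp^{f_\fp} = q}} \frac{1}{e_\fp f_\fp}.
\]

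Finally I would insert this into $\sum_q \psi_q \log\frac{q}{q-1}$ and exchange the order of summation. All summands are non-negative, so Tonelli applies without reservation; the outer sum is known to be finite by Proposition \ref{prop:lamdba_mu_rel_ratio}, so no issue of divergence arises. Each prime $\fp$ with $e_\fp f_\fp < \infty$ contributes to exactly one value of $q$, namely $q = \N\fp^{f_\fp}$, yielding
\[
\sum_q \psi_q(\cK/K) \log\frac{q}{q-1} \;=\; \sum_{\fp \colon e_\fp f_\fp < \infty} \frac{1}{e_\fp f_\fp} \log \frac{\N\fp^{f_\fp}}{\N\fp^{f_\fp} - 1} \;=\; \beta(\cK/K),
\]
which together with Proposition \ref{prop:lamdba_mu_rel_ratio} completes the proof. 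The only substantive point is bookkeeping around primes with infinite local degree, but since these contribute neither to $\psi_q$ (by the case analysis above) nor to $\beta$ (by its definition), there is no genuine obstacle.
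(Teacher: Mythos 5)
Your proposal is correct and follows essentially the same route as the paper: reduce via Proposition \ref{prop:lamdba_mu_rel_ratio}, decompose $\Phi_q(K_n)$ fiberwise over the (finitely many relevant) primes $\fp$ of $K$, pass to the limit to get $\psi_q = \sum_{\N\fp^{f_\fp}=q,\, e_\fp f_\fp<\infty} \tfrac{1}{e_\fp f_\fp}$, and re-index the resulting double sum to obtain $\beta$. Your explicit case analysis of $e_n(\fp), f_n(\fp)$ (implicitly choosing the $K_n$ Galois over $K$, which is harmless by tower-independence of the invariants) merely spells out a limit the paper states without detail.
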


\begin{proof}
For a rational prime power $q$, a finite extension $L/K$ and $\fp \in \Sigma_K^f$, let 

\[ \Phi_{q,\fp}(L) = \{ \fq \in \Sigma_L^f \colon \N\fq = q, \fq \text{ lies over $\fp$} \}. \]

\noindent Clearly, for given $q$ and $L$, the set of all $\fp \in \Sigma_K^f$ such that $\Phi_{q,\fp}(L) \neq \emptyset$ is finite and contained in $S_p(K <)$. Using this, we compute:

\[ \psi_q = \lim_n \frac{\sharp \Phi_q(K_n)}{[K_n:K]} = \lim_n \sum_{\fp \in \Sigma_K^f} \frac{\sharp \Phi_{q,\fp}(K_n)}{[K_n:K]} = \sum_{\fp \in \Sigma_K^f} \lim_n \frac{\sharp \Phi_{q,\fp}(K_n)}{[K_n:K]} = \sum_{\substack{\fp \in \Sigma_K^f \\ e_{\fp}f_{\fp} < \infty \\ \N \fp^{f_{\fp}} = q }} \frac{1}{e_{\fp}f_{\fp}}. \]

\noindent Thus by Proposition \ref{prop:lamdba_mu_rel_ratio} we get:

\begin{eqnarray*} 
\frac{\lambda_{\rel}}{\mu_{\rel}} - g_K &=& \sum_q \psi_q \log \frac{q}{q-1} = \sum_q (\sum_{\substack{\fp \in \Sigma_K^f \\ e_{\fp}f_{\fp} < \infty \\ \N \fp^{f_{\fp}} = q }} \frac{1}{e_{\fp}f_{\fp}}) \log \frac{q}{q-1} \\ 
&=& \sum_{\substack{\fp \in \Sigma_K^f \\ e_{\fp}f_{\fp} < \infty } } \frac{1}{e_{\fp}f_{\fp}} \log \frac{\N\fp^{f_{\fp}}}{\N\fp^{f_{\fp}} - 1} = \beta. \qedhere
\end{eqnarray*}
\end{proof}


\subsection{Galois towers with arbitrary $\mu_{\rel} < \infty$} \mbox{}

Let us deduce invariants of wildly ramified towers. 

\begin{prop}\label{prop:elegant_comp_for_almost_tame_towers}
Let $\cK/K$ be an infinite Galois extension with $\mu_{\rel}(\cK/K) < \infty$. Let $H \subseteq \Gal_{\cK/K}$ be a closed normal subgroup with fixed field $\cK^H$. Assume for any finite Galois subextension $\cK^H/L/K$, we are given an ascending family $(\cL_n)_{n = 0}^{\infty}$ of infinite Galois subextensions $\cK/\cL_n/L$. Write $\cL_{\infty} := \bigcup_{n = 0}^{\infty} \cL_n$, $\cL_n^H := \cL_n \cap \cK^H$ for $n < \infty$ and $\cL_{\infty}^H := \bigcup_n \cL_n^H$. Assume that for all $L$ we have:
\begin{itemize}
\item $\cL_n^H/L$ is infinite, $0 < \mu_{\rel}(\cL_n/L) < \infty$  and $0 < \mu_{\rel}(\cL_n^H/L) < \infty$ for $0 \ll n < \infty$.
\end{itemize}

\noindent Then for any finite Galois subextension $\cK^H/L/K$, we have (in particular, the limit on the left side exists):

\[ \lim_n \left( \frac{\lambda_{\rel}(\cL_n^H/L)}{\mu_{\rel}(\cL_n^H/L)} - \frac{\lambda_{\rel}(\cL_n/L)}{\mu_{\rel}(\cL_n/L)} \right) = \beta(\cL_{\infty}^H/L) - \beta(\cL_{\infty}/L). \]


\noindent Moreover, assume $\cL_{\infty}^H/K$ is Galois for any $L$ and $\bigcup_{\cK^H/L/K} \cL_{\infty}$ has no primes of finite norm. Then we have

\[ \lim_{\cK^H/L/K} \frac{1}{[L:K]} (\beta(\cL_{\infty}^H/L) - \beta(\cL_{\infty}/L)) =  \beta(\cK^H/K). \]
\end{prop}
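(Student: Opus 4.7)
The proposition combines two claims and the proof falls naturally into three stages.

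For the first displayed equation (before ``Moreover''), apply Proposition~\ref{prop:normal_case_invariants} to the Galois towers $\cL_n/L$ and $\cL_n^H/L$ (both with $0<\mu_{\rel}<\infty$ by hypothesis for $n\gg 0$), giving $\lambda_{\rel}/\mu_{\rel}=g_L+\beta$ in each case; subtracting cancels the $g_L$ terms, reducing the first claim to $\beta(\cL_n^{\bullet}/L)\to\beta(\cL_{\infty}^{\bullet}/L)$ for $\bullet\in\{\emptyset,H\}$. For each prime $\fp$ of $L$, the summand at $\fp$ in $\beta(\cL_n/L)$ is non-increasing in $n$ (the local absolute degree $e_{\fp}f_{\fp}$ and inertia degree $f_{\fp}$ grow with $n$, and $\tfrac{1}{d}\log\tfrac{q^f}{q^f-1}$ decreases in each of $d,f$) and converges pointwise to the $\cL_{\infty}/L$-summand (vanishing precisely when the limiting local absolute degree is infinite, matching the convention in the definition of $\beta$). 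Since $\sum_{\fp}S_0(\fp)=\beta(\cL_0/L)<\infty$, monotone convergence yields termwise passage to the limit.

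For the ``moreover'' statement, the main computation is an \emph{orbit decomposition}. Using that $\cL_{\infty}^H/K$ and $L/K$ are both Galois, multiplicativity of local data in the tower $\cL_{\infty}^H/L/K$ gives, for any $\fp\in\Sigma_K^f$ and any $\fq\mid\fp$ in $L$,
\[
e_{\fq}(\cL_{\infty}^H/L)\,f_{\fq}(\cL_{\infty}^H/L)=\frac{e_{\fp}(\cL_{\infty}^H/K)\,f_{\fp}(\cL_{\infty}^H/K)}{e_{\fq}(L/K)\,f_{\fq}(L/K)},\qquad \N\fq^{f_{\fq}(\cL_{\infty}^H/L)}=\N\fp^{f_{\fp}(\cL_{\infty}^H/K)}.
\]
Substituting into the definition of $\beta(\cL_{\infty}^H/L)$, grouping primes $\fq$ of $L$ by $\fp:=\fq|_K$, and using $\sum_{\fq\mid\fp}e_{\fq}(L/K)f_{\fq}(L/K)=[L:K]$ (Galoisness of $L/K$), one gets the clean identity $\frac{1}{[L:K]}\beta(\cL_{\infty}^H/L)=\beta(\cL_{\infty}^H/K)$. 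As $L$ runs cofinally in $\cK^H/K$, the sandwich $L\subseteq\cL_{\infty}^H\subseteq\cK^H$ forces each local absolute degree at $\fp$ to lie between $e_{\fp}(L/K)f_{\fp}(L/K)$ and $e_{\fp}(\cK^H/K)f_{\fp}(\cK^H/K)$, whose difference vanishes in the limit; a monotone/dominated convergence argument on summands then gives $\beta(\cL_{\infty}^H/K)\to\beta(\cK^H/K)$.

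The \textbf{main obstacle} is to show $\beta(\cL_{\infty}/L)/[L:K]\to 0$, where the hypothesis that $\bigcup_L\cL_{\infty}$ has no prime of finite norm enters essentially. That hypothesis translates into: for every $\fp\in\Sigma_K^f$ the residue field of any prolongation of $\fp$ to $\bigcup_L\cL_{\infty}$ is infinite, so the inertia degrees $f_{\fq}(\cL_{\infty}/K)$ for primes $\fq\mid\fp$ of $L$ diverge as $L\to\cK^H$. Using the elementary bound $S(\fq)\le 2\,\N\fp^{-f_{\fq}(\cL_{\infty}/K)}$ together with $\sum_{\fq\mid\fp}e_{\fq}(L/K)f_{\fq}(L/K)=[L:K]$, we split the outer sum $\sum_{\fp\in\Sigma_K^f}$ at a norm threshold $T$: finitely many small-norm primes whose contributions individually tend to $0$ by the divergence of the local inertia, and a large-norm tail controlled by a $\sum_{\N\fp>T}\N\fp^{-2}$-type estimate uniform in $L$. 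The delicate point, and the genuine technical hurdle, is that the fields $\cL_{\infty}^{(L)}$ need not form an ascending chain as $L$ varies, so the convergence must be phrased as a uniform estimate for all sufficiently large $L$ rather than deduced from monotone convergence; combined with the previous paragraph this establishes the ``moreover'' equality.
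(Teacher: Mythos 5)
Your handling of the first limit and of the term $\beta(\cL_\infty^H/L)$ is essentially the paper's own argument: Proposition \ref{prop:normal_case_invariants} for both towers (the $g_L$'s cancel) plus termwise monotone/dominated convergence is exactly what the paper does, and your orbit-decomposition identity $\frac{1}{[L:K]}\beta(\cL_{\infty}^H/L)=\beta(\cL_{\infty}^H/K)$ followed by the passage to $\beta(\cK^H/K)$ is precisely the content of Lemma \ref{lm:cont_of_beta}, applied with $\overline{\cL}=\cK^H$. (Minor slip: the hypotheses give $0<\mu_{\rel}(\cL_n/L)<\infty$ only for $n\gg 0$, so the convergent majorant must be $\beta(\cL_{n_0}/L)$ for some large $n_0$, not $\beta(\cL_0/L)$, which need not be finite.)

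The genuine gap is in your ``main obstacle'' paragraph, i.e. the proof that $\frac{1}{[L:K]}\beta(\cL_{\infty}/L)\to 0$. Your tail estimate throws away the prefactor $\frac{1}{e_{\fq}(\cL_\infty/L)f_{\fq}(\cL_\infty/L)}$ and keeps only $S(\fq)\le 2\,\N\fp^{-f_{\fq}(\cL_\infty/K)}$, which is of size $\N\fp^{-2}$ only when $f_{\fq}(\cL_\infty/K)\ge 2$. But the hypothesis concerns $\bigcup_{L}\cL_\infty$ only: for a \emph{fixed} $L$ the field $\cL_\infty=\cL_\infty^{(L)}$ may contain infinitely many primes of residue degree $1$ over $K$ (and over $\bQ$) with finite local degree in $\cL_\infty/L$ --- exactly the primes that contribute to $\beta(\cL_\infty/L)$ --- and for these your bound is only $O(\N\fp^{-1})$, so the sum over $\N\fp>T$ is not even finite from this estimate, let alone small uniformly in $L$. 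Since, as you note yourself, the $\cL_\infty^{(L)}$ need not increase with $L$, you also cannot fall back on a plain monotone-convergence argument, so the ``moreover'' equality is not established as written. What is actually needed (and what the paper's route via Lemma \ref{lm:cont_of_beta} supplies) is a convergent dominating series for the large-norm tail that is uniform in $L$: one keeps the full summand $\frac{1}{ef}\log\frac{\N\fq^{f}}{\N\fq^{f}-1}$, uses the monotone decrease of the $\fp$-grouped summands as the tower over a fixed base grows, and dominates the tail by the tail of a series already known to converge, e.g. a regrouping of $\frac{1}{[L_0:K]}\beta(\cL^{(L_0)}_{n_0}/L_0)<\infty$, whose finiteness comes from $0<\mu_{\rel}<\infty$ via Propositions \ref{prop:normal_case_invariants} and \ref{prop:lamdba_mu_rel_ratio}; this in turn uses some compatibility of the $\cL_\infty^{(L)}$ as $L$ varies (in the intended application $\cL_\infty^{(L)}=L_S^{\tame}$ is increasing in $L$). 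The divergence of the residue degrees along the net, by itself, does not give the uniform tail control your argument requires.
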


\begin{proof}
To prove the first statement, fix some finite Galois subextension $\cK^H/L/K$. As $0 < \mu_{\rel}(\cL_n/L) < \infty$, we know that $\beta(\cL_n/L)$ exists by Proposition \ref{prop:normal_case_invariants}. Moreover, for $m \leq n$, we have $\beta(\cL_m/L) \geq \beta(\cL_n/L) \geq 0$, hence $\beta(\cL_n/L)$ form a monotone decreasing, bounded from below by $0$, hence convergent sequence and we have 

\begin{eqnarray} \nonumber
\lim_n \beta(\cL_n/L) &=& \lim_n \sum_{\fp \in \Sigma_L^f} \frac{1}{e_{\fp}(\cL_n/L)f_{\fp}(\cL/L)} \log \frac{\N\fp^{f_{\fp}(\cL_n/L)}}{\N\fp^{f_{\fp}(\cL_n/L)} - 1 } \\ \label{eq:beta_infty_lim_beta_n}
&=& \sum_{\fp \in \Sigma_L^f} \lim_n \frac{1}{e_{\fp}(\cL_n/L)f_{\fp}(\cL_n/L)} \log \frac{\N\fp^{f_{\fp}(\cL_n/L)}}{\N\fp^{f_{\fp}(\cL_n/L)} - 1 }  \\
&=& \sum_{\fp \in \Sigma_L^f} \frac{1}{e_{\fp}(\cL_{\infty}/L)f_{\fp}(\cL_{\infty}/L)} \log \frac{\N\fp^{f_{\fp}(\cL_{\infty}/L)}}{\N\fp^{f_{\fp}(\cL_{\infty}/L)} - 1 } = \beta(\cL_{\infty}/L) \nonumber.
\end{eqnarray}

\noindent In the second equation we are allowed to interchange the limit with the sum by the dominated convergence theorem, as the summands form a monotone decreasing sequence for $n \rar \infty$. In particular, $\beta(\cL_{\infty}/L)$ exists. Analogously, we deduce that $\beta(\cL_{\infty}^H/L)$ exists and that 
\begin{equation} \label{eq:beta_infty_lim_beta_n_cap_H}
\beta(\cL_{\infty}^H/L) = \lim_n \beta(\cL_n^H/L). 
\end{equation}

\noindent For any $n \geq 0$, we have by Proposition \ref{prop:normal_case_invariants}:

\begin{equation}\label{eq:diff_of_lambda_mu_quots_wild_normal}
\frac{\lambda_{\rel}(\cL_n^H/L)}{\mu_{\rel}(\cL_n^H/L)} - \frac{\lambda_{\rel}(\cL_n/L)}{\mu_{\rel}(\cL_n/L)} = \beta(\cL_n^H/L) - \beta(\cL_n/L).
\end{equation}

\noindent Since by \eqref{eq:beta_infty_lim_beta_n} and \eqref{eq:beta_infty_lim_beta_n_cap_H} the limit for $n \rar \infty$ of the right side of \eqref{eq:diff_of_lambda_mu_quots_wild_normal} exists, also the limit of the left side exists and we have

\[ \lim_n \left( \frac{\lambda_{\rel}(\cL_n^H/L)}{\mu_{\rel}(\cL_n^H/L)} - \frac{\lambda_{\rel}(\cL_n/L)}{\mu_{\rel}(\cL_n/L)} \right) = \lim_n \beta(\cL_n^H/L) - \lim_n \beta(\cL_n/L) = \beta(\cL_{\infty}^H/L) - \beta(\cL_{\infty}/L). \]

\noindent The first statement of the proposition follows. The second statement follows from Lemma \ref{lm:cont_of_beta} which has to be applied twice: first to $\overline{\cL} := \bigcup_{\cK^H/L/K} \cL_{\infty}$ and $\cL_L := \cL_{\infty}$ and then to $\overline{\cL} := \bigcup_{\cK^H/L/K} \cL_{\infty}^H = \cK^H$ and $\cL_L := \cL_{\infty}^H$. \end{proof}

We have the following continuity property of $\beta$:

\begin{lm}\label{lm:cont_of_beta}
Let $\overline{\cL}/K$ be a Galois extension, let $L/K$ run through an increasing family $\cF$ of finite Galois subextensions of $\overline{\cL}/K$ and for each $L$, let $\cL_L/L$ be a subextension of $\overline{\cL}/L$, such that $\cL_L/K$ is infinite Galois and $\bigcup_L \cL_L = \overline{\cL}$. Then

\[ \lim_{L \in \cF} \frac{1}{[L:K]}\beta(\cL_L/L) = \beta(\overline{\cL}/K). \]
\end{lm}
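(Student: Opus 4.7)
\begin{pro}[Proof sketch]
The plan is to reduce the sum defining $\beta(\cL_L/L)$ to a sum indexed by primes of $K$ and then to pass to the limit termwise.

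First, fix $L \in \cF$ and group the primes $\fp_L \in \Sigma_L^f$ according to their restriction $\fp := \fp_L|_K \in \Sigma_K^f$. Since $\cL_L/K$ is Galois, all primes $\fp_L$ lying above $\fp$ share the same local invariants $e_{\fp_L}(\cL_L/L)$, $f_{\fp_L}(\cL_L/L)$, and there are exactly $[L:K]/(e_\fp(L/K) f_\fp(L/K))$ of them. Using the multiplicativity $e_\fp(\cL_L/K) = e_{\fp_L}(\cL_L/L)\, e_\fp(L/K)$, its analogue for $f$, and $\N\fp_L = \N\fp^{f_\fp(L/K)}$, the definition of $\beta(\cL_L/L)$ collapses to
\[ \frac{1}{[L:K]}\,\beta(\cL_L/L) \;=\; \sum_{\fp \in \Sigma_K^f} c_\fp(\cL_L/K), \]
where for any Galois $M/K$ I set $c_\fp(M/K) := \frac{1}{e_\fp(M/K) f_\fp(M/K)} \log \frac{\N\fp^{f_\fp(M/K)}}{\N\fp^{f_\fp(M/K)} - 1}$ when $e_\fp f_\fp < \infty$, and $c_\fp(M/K) := 0$ otherwise.

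Next I pass to the limit $L \in \cF$ termwise. The assumption $\bigcup_L \cL_L = \overline{\cL}$ translates into $\Gal(\overline{\cL}/\cL_L) \downarrow 1$ on the Galois side; intersecting with the inertia and decomposition subgroups of a fixed prolongation of $\fp$ to $\overline{\cL}$ shows that $e_\fp(\cL_L/K)\, f_\fp(\cL_L/K)$ is monotone non-decreasing and converges as a supernatural number to $e_\fp(\overline{\cL}/K)\, f_\fp(\overline{\cL}/K)$. An elementary check shows that $n \mapsto \frac{1}{n}\log\frac{q^n}{q^n-1}$ is strictly decreasing in $n \geq 1$; combined with the fact that $c_\fp$ decreases when $e_\fp$ grows at fixed $f_\fp$, this gives $c_\fp(\cL_L/K) \downarrow c_\fp(\overline{\cL}/K)$ monotonically.

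Finally, since each $L \mapsto c_\fp(\cL_L/K)$ is non-negative and monotone decreasing, monotone convergence for the counting measure on $\Sigma_K^f$ lets me interchange the limit with the sum, yielding the desired equality; the case $\beta(\overline{\cL}/K) = +\infty$ is handled trivially by the same monotonicity, which forces $\sum_\fp c_\fp(\cL_L/K) \geq \beta(\overline{\cL}/K)$ for every $L$. The main bookkeeping subtlety — not a real obstacle — is that the limit $\lim_{L \in \cF}$ is most cleanly read under an implicit monotonicity $\cL_L \subseteq \cL_{L'}$ for $L \leq L'$ in $\cF$; passing to a cofinal chain (possible because $\cF$ is directed) reduces to this situation, which moreover is automatic in the applications to Proposition \ref{prop:elegant_comp_for_almost_tame_towers}.
\end{pro}
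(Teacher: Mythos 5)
Your argument is essentially the paper's own proof: you group the primes of $L$ by their restriction to $K$, use the Galois hypothesis and $\sharp S_{\fp}(L) = [L:K]/(e_{\fp}(L/K)f_{\fp}(L/K))$ together with transitivity of $e_{\fp}$, $f_{\fp}$ to rewrite $\frac{1}{[L:K]}\beta(\cL_L/L)$ as a sum over $\Sigma_K^f$ of the corresponding terms for $\cL_L/K$, and then interchange the limit over $L$ with the sum exactly as the paper does (which likewise appeals to the monotone decrease of the summands, as in Proposition \ref{prop:elegant_comp_for_almost_tame_towers}). The monotonicity issue for the family $(\cL_L)_L$ that you flag is also left implicit in the paper and is satisfied in its applications, so your write-up is correct and follows the same route.
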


\begin{proof}
We have:

\begin{eqnarray*}
\lim_L \frac{1}{[L:K]} \beta(\cL_L/L) &=& \lim\limits_L \frac{1}{[L:K]} \sum\limits_{\fp_L \in \Sigma_L^f} \frac{1}{e_{\fp_L}(\cL_L/L)f_{\fp_L}(\cL_L/L)} \log \frac{\N\fp_L^{f_{\fp_L}(\cL_L/L)}}{\N\fp^{f_{\fp_L}(\cL_L/L)} - 1} \\
&=& \lim\limits_L \frac{1}{[L:K]} \sum\limits_{\fp \in \Sigma_K^f} \sum\limits_{\fp_L \in S_{\fp}(L)} \frac{1}{e_{\fp_L}(\cL_L/L)f_{\fp_L}(\cL_L/L)} \log \frac{\N\fp_L^{f_{\fp_L}(\cL_L/L)}}{\N\fp^{f_{\fp_L}(\cL_L/L)} - 1},
\end{eqnarray*}

\noindent where $S_{\fp}(L)$ denotes the set of all primes of $L$ lying over $\fp$. Choosing now one representative $\fp_L \in S_{\fp}(L)$ (note that as $\cL_L/K$ is assumed to be Galois, all these representatives have the same properties in the tower $\cL_L/L$) and noting that $\sharp S_{\fp}(L) = \frac{[L:K]}{e_{\fp}(L/K)f_{\fp}(L/K)}$, we see that the above expression is equal to

\begin{eqnarray*}
&=& \lim_L \frac{1}{[L:K]} \sum_{\fp \in \Sigma_K^f} \frac{[L:K]}{e_{\fp}(L/K)f_{\fp}(L/K)} \left( \frac{1}{e_{\fp_L}(\cL_L/L)f_{\fp_L}(\cL_L/L)} \log \frac{\N\fp_L^{f_{\fp_L}(\cL_L/L)}}{\N\fp_L^{f_{\fp_L}(\cL_L/L)} - 1} \right) \\
&=& \lim_L \sum_{\fp \in \Sigma_K^f} \frac{1}{e_{\fp}(\cL_L/K)f_{\fp}(\cL_L/K)} \log \frac{\N\fp^{f_{\fp}(\cL_L/K)}}{\N\fp^{f_{\fp}(\cL_L/L)} - 1} \\
&=& \beta(\overline{\cL}/K),
\end{eqnarray*}

\noindent where we used transitivity of $e_{\fp}$ and $f_{\fp}$ for the first and the absolute convergence of the involved series and $\bigcup_L \cL_L = \overline{\cL}$ for the last equation. Again, as in the proof of Proposition \ref{prop:elegant_comp_for_almost_tame_towers}, we are allowed to interchange the limit and the sum. \qedhere

\end{proof}

\section{Characterization of decomposition subgroups} \label{sec:char_of_dec_groups}\mbox{}

Let $S$ be a finite set of primes of $K$ with $S_p \cup S_{\infty} \subseteq S$. We now consider the question of how to describe decomposition subgroups of primes outside $S$ using anabelian information.

\subsection{Some preliminaries}\label{sec:Conditions_C1_C3_etc} \mbox{}

For a pro-finite group $H$, let $H_p$ denote a pro-$p$-Sylow subgroup of it. Consider the following condition on $K,S$:

\begin{itemize}
\item[(C1)] for any two different primes $\bap, \bar{\fq}$ of $K_S$, the intersection $D_{\bap,p} \cap D_{\bar{\fq},p} \subseteq D_{\bap,p}$ is not open in $D_{\bap,p}$.
\end{itemize}

\noindent Here $D_{\bap,p}$ denotes a pro-$p$-Sylow subgroup of the decomposition subgroup $D_{\bap}$ of $\bap$ in $K_S/K$. Note that (C1) for $K,S$ implies (C1) for $L,S$, where $K_S/L/K$ is a finite subextension. Note also that due to $S_p \cup S_{\infty} \subseteq S$, the decomposition subgroup $D_{\bap} \subseteq \Gal_{K,S}$ at a prime $\bap \not\in S(K_S)$ is pro-cyclic with infinite pro-$p$-Sylow subgroup. We have the following standard lemma.

\begin{lm}\label{lm:FKS_lemma}
Assume (C1) holds for $K,S$. 
\begin{itemize}
\item[(i)] If $x \in \Gal_{K,S}$ normalizes an open subgroup of $D_{\bap,p}$, then $x \in D_{\bap}$.
\item[(ii)] If $Z \subseteq \Gal_{K,S}$ is an abelian subgroup with infinite cyclic pro-$p$-Sylow subgroup, then there is at most one prime $\bap$ of $K_S$ (which then has to lie outside $S(K_S)$), such that $Z \cap D_{\bap} \subseteq D_{\bap}$ is open. Moreover, in that case, we have $Z \subseteq D_{\bap}$.
\end{itemize}
\end{lm}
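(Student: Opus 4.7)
The plan is to prove (i) by a direct conjugation argument, and then to deduce the ``moreover'' and uniqueness parts of (ii) from (i), leaving the assertion $\bap \notin S(K_S)$ as a separate structural observation.

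For (i), the key input is that conjugation permutes decomposition groups: for any $x \in \Gal_{K,S}$, $x D_{\bap,p} x^{-1}$ is a pro-$p$-Sylow of $D_{x\bap}$, which we denote $D_{x\bap,p}$. If $x$ normalizes an open subgroup $V \subseteq D_{\bap,p}$, then $V = xVx^{-1}$ sits inside both $D_{\bap,p}$ and $D_{x\bap,p}$, so their intersection contains $V$ and is therefore open in $D_{\bap,p}$. The contrapositive of (C1) then forces $\bap = x\bap$, that is, $x \in D_\bap$.

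For the ``moreover'' in (ii), set $U := Z \cap D_\bap$, which by hypothesis is open in $D_\bap$. Being a subgroup of the abelian group $Z$, $U$ is itself abelian, so its pro-$p$-Sylow $U_p$ is well defined and lies inside some pro-$p$-Sylow of $D_\bap$, which we may take to be $D_{\bap,p}$; since $D_{\bap,p}/U_p$ embeds into the finite group $D_\bap / U$, $U_p$ is open in $D_{\bap,p}$. Every $z \in Z$ centralizes $U$ because $Z$ is abelian, and in particular normalizes the open subgroup $U_p$ of $D_{\bap,p}$. By (i), $z \in D_\bap$, hence $Z \subseteq D_\bap$. For uniqueness, suppose that two distinct primes $\bap \neq \baq$ both satisfied the hypothesis; the moreover just proved gives $Z \subseteq D_\bap \cap D_\baq$, so that $Z = Z \cap D_\bap$ is open in $D_\bap$ and $Z_p$ is open in the pro-$p$-Sylow of $D_\bap$ containing it, and similarly for $\baq$. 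Choosing $D_{\bap,p}, D_{\baq,p}$ to be these Sylows, one obtains $D_{\bap,p} \cap D_{\baq,p} \supseteq Z_p$ open in $D_{\bap,p}$, which contradicts (C1).

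Finally, the assertion that such a $\bap$ has to lie outside $S(K_S)$ should follow from structural properties of decomposition groups at primes of $S$: for $\bap\mid p$, the pro-$p$-Sylow $D_{\bap,p}$ has $\bZ_p$-rank at least $2$ by local class field theory (under the standing assumption that $K_S$ realizes the maximal local $p$-extension at primes of $S_p$), so it admits no open infinite cyclic pro-$p$-subgroup; for archimedean $\bap$, $D_\bap$ is finite and hence cannot contain the open subgroup $Z$, whose pro-$p$-Sylow is infinite cyclic. I expect this last point to be the main technical obstacle, since it requires a genuine input on $K_S$ beyond condition (C1) alone, namely information about the local structure at primes of $S$.
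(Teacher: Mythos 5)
Your part (i) and the main assertions of (ii) are correct and essentially the paper's own argument: conjugation carries $D_{\bap,p}$ to $D_{x\bap,p}$, so a common open subgroup violates (C1) unless $x\bap=\bap$; for the ``moreover'' in (ii), the abelian group $Z$ normalizes the open subgroup $Z\cap D_{\bap,p}$ (your $U_p$) of $D_{\bap,p}$, so (i) gives $Z\subseteq D_{\bap}$, and uniqueness follows by putting $Z_p$ inside both Sylow subgroups and invoking (C1) once more. These steps match the paper's proof.

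The genuine gap is in the parenthetical claim that $\bap$ must lie outside $S(K_S)$, which you yourself flag as unfinished. You only discuss $\bap \mid p$ and archimedean places, but $S$ is an arbitrary finite set containing $S_p\cup S_\infty$, so it generally contains finite primes $\bap\in S^f\sm S_p$ — and these are precisely the ``boundary'' primes that matter elsewhere in the paper; your argument says nothing about them. The paper excludes all finite primes of $S$ at once by the structural fact (used already in Section \ref{sec:boundary_via_Greenberg_conjecture}, going back to \cite{Iv}) that the pro-$p$-Sylow subgroup of $D_{\bap}$ for $\bap\in S(K_S)$ is a non-abelian group of the form $\bZ_p\ltimes\bZ_p$ (and for $\bap\in S_p$ in any case a group without open abelian subgroups); since $Z\cap D_{\bap,p}$ is an abelian (in fact pro-cyclic) open subgroup of $D_{\bap,p}$, such $\bap$ cannot occur. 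Your rank-$\geq 2$ reasoning for $\bap\mid p$ is the same idea, but the case of primes of $S$ prime to $p$ is missing and needs exactly this Demushkin-type input. Finally, the archimedean case is vacuous by the paper's convention that ``prime'' means non-archimedean, and your argument there does not work as stated anyway: for a finite group $D_{\bap}$ the intersection $Z\cap D_{\bap}$ is automatically open, so the exclusion cannot come from openness considerations but only from the convention.
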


\begin{proof}
(i):  We have $x D_{\bap,p} x^{-1} = D_{x\bap,p}$. Let $U \subseteq D_{\bap,p}$ be an open subgroup normalized by $x$. Thus $D_{\bap,p} \supseteq U = xUx^{-1} \subseteq D_{x\bap,p}$, i.e., the open subgroup $U$ of $D_{\bap,p}$ is contained in the intersection of $D_{\bap,p}$ and $D_{x\bap,p}$. By (C1) we get $x \bap = \bap$, i.e., $x \in D_{\bap}$. \\

(ii): Let $\bap$ be a prime, such that $Z \cap D_{\bap} \subseteq D_{\bap}$ is open. In particular, $Z \cap D_{\bap,p} \subseteq D_{\bap,p}$ is open. As $Z$ is abelian, $Z$ normalizes $Z \cap D_{\bap,p}$, hence $Z \subseteq D_{\bap}$ by part (i) of the lemma. Note that $\bap$ must lie outside $S$, as for $\bap \in S(K_S)$, the $p$-Sylow subgroup $D_{\bap,p} \cong \bZ_p \ltimes \bZ_p$ do not contain an open abelian subgroup. If $Z \cap D_{\bap_i} \subseteq D_{\bap_i}$ is open for $i = 1,2$, then $Z \subseteq D_{\bap_i}$ and hence $Z_p \subseteq D_{\bap_i,p}$ (as $D_{\bap_i}$ is abelian). Thus $\bZ_p \cong Z_p \subseteq D_{\bap_1,p} \cap D_{\bap_2,p}$. As $D_{\bap_i, p} \cong \bZ_p$, we see that $D_{\bap_1,p} \cap D_{\bap_2,p} \subseteq D_{\bap_1,p}$ must be open. By condition (C1), $\bap_1 = \bap_2$. \qedhere
\end{proof}

Unfortunately, condition (C1) is not enough to apply our method to $K_S/K$ with $S$ finite, and we have to assume either one of the following stronger conditions additionally to (C1). For a subgroup $Z$ of a pro-finite group $U$, we denote by $\langle\langle Z \rangle\rangle_U$ the smallest normal closed subgroup of $U$ containing $Z$.

\begin{itemize}
\item[(C2)] Condition (C1) holds and if $Z \subseteq \Gal_{K,S}$ is a closed pro-cyclic subgroup with infinite pro-$p$-Sylow subgroup, then the following holds:
    \begin{itemize}
	\item[$\bullet$] If $Z \subseteq D_{\bap}$ is open for a prime $\bap$ of $K_S$ lying over $\fp$ (necessarily $\fp \not\in S$), then
		  \[ \lim_{K_S^Z/L/K} \beta(K_S^{Z_L}/L) = \log \frac{\N\fp^{f_{\bap}(K_S^Z/K)}}{\N\fp^{f_{\bap}(K_S^Z/K)} - 1}, \] 
	    \noindent where $Z_L := \langle\langle Z \rangle\rangle_{\Gal_{K_S/L}}$ and $f_{\bap}(K_S^Z/K) = (D_{\bap} : Z)$ is the index of $\bap$ in $K_S^Z/K$.
	\item[$\bullet$] If $Z \cap D_{\bap}$ is not open in $D_{\bap}$ for any $\bap$, then
		  \[ \lim_{K_S^Z/L/K} \beta(K_S^{Z_L}/L) = 0. \] 
    \end{itemize}
\item[(C3)] For any finite subextension $K_S/L/K$, for any two primes $\fp \neq \fq$ of $L$, the decomposition subgroup of $\fq$ in the extension $K_S^{\{\fp\}}/L$ is infinite.
\end{itemize}

\begin{lm}\label{lm:C3_conn_to_C2}
Condition (C3) for $K,S$ implies (C1) and the first part of (C2), i.e., for any closed pro-cyclic subgroup $Z \subseteq \Gal_{K,S}$ with infinite pro-$p$-Sylow subgroup, if $Z \subseteq D_{\bap}$ is open for a prime $\bap$ of $K_S$, then
\[ \lim_{K_S^Z/L/K} \beta(K_S^{Z_L}/L) = \log \frac{\N\fp^{f_{\bap}(K_S^Z/K)}}{\N\fp^{f_{\bap}(K_S^Z/K)} - 1}. \]
\end{lm}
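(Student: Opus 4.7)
My plan is to prove the two assertions of the lemma separately: first the limit formula (the ``first part of (C2)''), then (C1), both using condition (C3). The limit formula is the main content, derived by a direct analysis of the tower $M_L := K_S^{Z_L}/L$, while (C1) will follow by a contradiction argument at a finite level. Throughout, write $L_0 := K_S^Z$, $f := f_{\bap}(K_S^Z/K) = (D_{\bap}:Z)$, and $\fp_L := \bap|_L$ for a finite subextension $L \subseteq L_0/K$.

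For the limit formula the first step is to observe that $D_{\bap,L} = Z$ for all cofinally large $L$ inside $L_0/K$. Indeed, $Z \subseteq D_{\bap,L} = D_{\bap,K}\cap \Gal(K_S/L)$, and the quotient $D_{\bap,L}/Z$ embeds in the finite group $D_{\bap,K}/Z$ of order $f$; since $\bigcap_L \Gal(K_S/L) = Z$ inside $L_0$, this quotient must become trivial eventually. For such $L$, $Z \subseteq Z_L$ implies that the image of $D_{\bap,L} = Z$ in $\Gal(M_L/L) = \Gal(K_S/L)/Z_L$ is trivial, so $\fp_L$ splits completely in $M_L/L$. Because $\bap \notin S$, the extension $L/K$ is unramified at $\fp$, hence $e_{\fp}(L/K) = 1$ and $f_{\fp}(L/K) = (D_{\bap,K} : D_{\bap,L}) = f$, giving $\N\fp_L = \N\fp^f$. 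For every other prime $\fq_L$ of $L$, (C3) provides an infinite decomposition subgroup of $\fq_L$ in $K_S^{\{\fp_L\}}/L$; the inclusion $Z_L \subseteq \langle\langle D_{\bap,L}\rangle\rangle_{\Gal(K_S/L)}$ (from $Z \subseteq D_{\bap,L}$) yields $M_L \supseteq K_S^{\{\fp_L\}}$, so this decomposition is also infinite in $M_L/L$ and $\fq_L$ contributes $0$ to $\beta(M_L/L)$. Assembling these steps, $\beta(M_L/L) = \log\frac{\N\fp^f}{\N\fp^f - 1}$ for $L$ cofinally large, which gives the claimed limit.

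For (C1) I would argue by contradiction: suppose distinct primes $\bap \neq \bar\fq$ of $K_S$ satisfy $H := D_{\bap,p} \cap D_{\bar\fq,p}$ open in $D_{\bap,p}$ (and, by symmetry, also in $D_{\bar\fq,p}$). Because $H$ fixes both primes, $\bap$ and $\bar\fq$ remain distinct as primes of $K_S^H$, and profinite compactness supplies a finite $L \subseteq K_S^H/K$ with $\fp_L := \bap|_L \neq \fq_L := \bar\fq|_L$; applying (C3) to $L$ and $(\fp_L,\fq_L)$ yields infinite decomposition of $\fq_L$ in $K_S^{\{\fp_L\}}/L$. The main obstacle is that this infinite image, being a pro-cyclic quotient of $D_{\bar\fq,L}$, has pro-$p$-Sylow bounded by the finite group $D_{\bar\fq,L,p}/H$, so (C3) alone is consistent with the prime-to-$p$ part carrying all the infiniteness and does not directly yield a contradiction. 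To close the argument, one combines (C3) with the limit formula just established, applied to the pro-cyclic open subgroups of $D_{\bap}$ and of $D_{\bar\fq}$ built from $H$ together with the respective prime-to-$p$ components: the two resulting values of $\lim_L \beta(K_S^{Z_L}/L)$, compared with the splitting behaviour of $\fp_L$ and $\fq_L$ forced by (C3), produce a matching constraint between $\N\fp$ and $\N\fq$ that is incompatible with the simultaneous pro-$p$ overlap $H$, yielding the required contradiction.
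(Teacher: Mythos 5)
Your treatment of the limit formula is correct and is essentially the paper's own argument: shrink $U$ until $U\cap D_{\bap}=Z$, observe that $Z_L\subseteq\langle\langle D_{\bap/\fp_L}\rangle\rangle_U$ so that by (C3) every prime $\fq_L\neq\fp_L$ has infinite degree in $K_S^{Z_L}/L$ and drops out of $\beta$, and note that $\fp_L$ splits completely in $K_S^{Z_L}/L$ (so $e_{\fp_L}=f_{\fp_L}=1$ there) while $\N\fp_L=\N\fp^{f_{\bap}(K_S^Z/K)}$ because $L/K$ is unramified at $\fp\notin S$. No complaints on that half; it matches the paper step for step.

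The (C1) half, however, contains a genuine gap, and your own diagnosis of it is accurate: (C3) asserts only that the decomposition group of $\fq_L$ in $K_S^{\{\fp_L\}}/L$ is \emph{infinite}, and since this group is a quotient of $D_{\bar\fq,L}/(H\cap U)$ whose pro-$p$ part is finite when $H=D_{\bap,p}\cap D_{\bar\fq,p}$ is open in $D_{\bar\fq,p}$, the infinitude could in principle be carried entirely by the prime-to-$p$ component, so no contradiction arises. Your proposed repair does not close this gap. The limit formula you established applies to a procyclic $Z$ that is open in the \emph{full} decomposition group $D_{\bap}$; the hypothesis negating (C1) only gives an open overlap of $p$-Sylows, so the two auxiliary subgroups $Z_1\subseteq D_{\bap}$ and $Z_2\subseteq D_{\bar\fq}$ you build from $H$ are distinct groups (indeed $Z_2$ need not even be open in $D_{\bar\fq}$ if $H$ is only assumed open in $D_{\bap,p}$), their $\beta$-limits are computed over different towers, and there is no identity between them to exploit; the ``matching constraint between $\N\fp$ and $\N\fq$'' is never derived, and even an equality $\N\fp^{f_1}=\N\fq^{f_2}$ would not by itself be contradictory. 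For what it is worth, the paper dismisses this implication with the single sentence ``(C1) easily follows from (C3)'' and supplies no argument either; what \emph{does} follow easily from (C3) by your method is that $D_{\bap}\cap D_{\bar\fq}$ is not open in $D_{\bar\fq}$ for the full decomposition groups (if it were, the image of $D_{\bar\fq,L}$ in $\Gal(L_S^{\{\fp_L\}}/L)$ would be finite), but passing from this to the statement about $p$-Sylow subgroups requires an additional input that neither you nor the paper provides.
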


\begin{proof} 
(C1) easily follows from (C3). Assume now, $Z \subseteq D_{\bap}$ is open for some $\bap$ with restriction $\fp$ to $K$. Then necessarily $\bap \not\in S(K_S)$. Let $Z \subseteq U \subseteq \Gal_{K,S}$ be open and write $L := K_S^U$ and $Z_L := \langle \langle Z \rangle \rangle_U$.  Let $\fp_L$ denote the restriction of $\bap$ to $L$. Then $Z_L \subseteq \langle \langle D_{\bap/\fp_L} \rangle \rangle_U$ and by (C3) we see that any prime $\fq \neq \fp_L$ of $L$ has infinite degree in the Galois tower $K_S^{\langle \langle D_{\bap/\fp_L} \rangle \rangle_U}/L$ and hence also in the Galois tower $K_S^{Z_L}/L$. Thus 

\[ \beta(K_S^{Z_L}/L) =  \frac{1}{f_{\fp_L}(K_S^{Z_L}/L)} \log \frac{\N\fp_L^{f_{\fp_L}(K_S^{Z_L}/L)}}{\N\fp_L^{f_{\fp_L}(K_S^{Z_L}/L)} - 1}. \]

\noindent For $U \supseteq Z$ small enough, we have $U \cap D_{\bap/\fp} = Z$, hence $f_{\bap}(L/K) = (D_{\bap/\fp}:Z) = f_{\bap}(K_S^Z/K)$ and $f_{\fp_L}(K_S^{Z_L}/L) = e_{\fp_L}(K_S^{Z_L}/L) = 1$. Thus $\N\fp_L^{f_{\fp_L}(K_S^{Z_L}/L)} = (\N\fp^{f_{\bap}(L/K)})^{f_{\fp_L}(K_S^{Z_L}/L)} = \N\fp^{f_{\bap}(K_S^Z/K)}$ and the lemma follows. \qedhere
\end{proof}

\begin{Def}\label{Def:lie_under} Let $U \subseteq \Gal_{K,S}$ be an open subgroup and $L := K_S^U$. Let $H \triangleleft U$ be a closed normal subgroup, which is not open. We say that a prime $\fp$ of $L$ \textbf{lies under} $H$, if for some (any) extension $\bap$ of $\fp$ to $K_S$, the inclusion $D_{\bap/\fp} \cap H \subseteq D_{\bap/\fp}$ is open (equivalently, $[\fp_H \colon \fp] < \infty$, where $\fp_H := \bap|_{K_S^H}$). For a set $R \subseteq \Sigma_L^f$, we define:
\[ z_R(U;H) := \text{the number of primes in $L$ lying outside $R$ and under $H$.} \]
\end{Def}

\begin{Prop}[Criterion 1]\label{prop:crit1} Let $H \triangleleft U$, $L$ be as in Definition \ref{Def:lie_under}. Assume that either $L/\bQ$ is almost normal, or that generalized Riemann hypothesis holds for all involved number fields. Assume that (C3) holds for $K,S$. We have:
\begin{itemize}
\item[(i)]  $z_S(U;H) = 0 \LRar \beta(K_S^H/L) - \sum_{\fp \in S^f(L)} \frac{1}{f_{\fp}e_{\fp}} \log \frac{\N\fp^{f_{\fp}}}{\N\fp^{f_{\fp}} - 1} = 0$ (where $f_{\fp} := f_{\fp}(K_S^H/L)$, etc.). 
\item[(ii)] $z_S(U;H) > 1 \LRar \exists H_1, H_2 \subseteq H$ which are closed and normal in $U$ such that $z_S(U;H_i) > 0$ for $i = 1,2$ and $z_S(U;H_1 \cap H_2) = 0$.
\end{itemize}
\end{Prop}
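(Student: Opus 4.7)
\begin{pro*}[Proof plan]
For (i), unfold the definition of $\beta(K_S^H/L)$ from Section~\ref{sec:some_invs_of_infinite_towers}: the defining sum runs exactly over primes of $L$ of finite absolute degree in $K_S^H/L$, equivalently over the primes of $L$ lying under $H$. Splitting off the (finite) contribution of $S^f(L)$, the left hand side of (i) becomes
\[ \sum_{\substack{\fp \in \Sigma_L^f \sm S^f(L) \\ \fp \text{ under } H}} \frac{1}{e_\fp f_\fp} \log \frac{\N\fp^{f_\fp}}{\N\fp^{f_\fp} - 1}, \]
whose summands are strictly positive real numbers. It therefore vanishes iff the index set is empty, i.e.\ $z_S(U;H)=0$. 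The GRH/almost-normal hypothesis is not actually required at this step; it is recorded only so that the criterion can later be combined with Propositions~\ref{prop:normal_case_invariants} and \ref{prop:elegant_comp_for_almost_tame_towers}.

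For (ii), the direction $\Leftarrow$ is almost immediate. Given $H_1, H_2$ as in the criterion, choose primes $\fp_i$ of $L$ outside $S$ lying under $H_i$; since $H_i \subseteq H$, both lie under $H$. If $\fp_1 = \fp_2 =: \fp$, then — the lying-under condition being independent of the chosen extension to $K_S$ by normality of the $H_i$ — the two open subgroups $D_\fp \cap H_1$ and $D_\fp \cap H_2$ of the pro-cyclic $D_\fp$ have open intersection, so $\fp$ lies under $H_1 \cap H_2$, contradicting $z_S(U; H_1 \cap H_2) = 0$. Hence $\fp_1 \neq \fp_2$ and $z_S(U;H) \geq 2$.

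The direction $\Rightarrow$ in (ii) is the place where (C3) enters. Pick two distinct primes $\fq_1 \neq \fq_2$ of $L$ outside $S$ lying under $H$, choose extensions $\baq_i$ to $K_S$, and set
\[ H_i := H \cap \langle\langle D_{\baq_i} \rangle\rangle_U. \]
These are closed normal subgroups of $U$ contained in $H$ (hence not open), and since $D_{\baq_i} \cap H \subseteq D_{\baq_i} \cap H_i$ is open in $D_{\baq_i}$, the prime $\fq_i$ lies under $H_i$, yielding $z_S(U; H_i) > 0$. To conclude $z_S(U; H_1 \cap H_2) = 0$, suppose for contradiction some prime $\fp$ of $L$ outside $S$ lay under $H_1 \cap H_2$; then $D_\fp \cap \langle\langle D_{\baq_i} \rangle\rangle_U$ would be open in the pro-cyclic $D_\fp$ for $i = 1, 2$. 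Equivalently, the image of $D_\fp$ in $\Gal(K_S^{\{\fq_i\}}/L) = U/\langle\langle D_{\baq_i} \rangle\rangle_U$ would be finite, which is precisely what (C3), applied at $L$ to the pair $\fp \neq \fq_i$, forbids, unless $\fp = \fq_i$. So $\fp = \fq_1 = \fq_2$, a contradiction.

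The only non-routine choice in the whole argument is the construction $H_i = H \cap \langle\langle D_{\baq_i} \rangle\rangle_U$ in $\Rightarrow$: intersecting $H$ with the normal closure of a single decomposition subgroup is what separates the two primes via (C3) while keeping $H_i$ inside $H$. I expect this to be the main conceptual step; everything else is formal bookkeeping with the definition of $\beta$ and the pro-cyclicity of $D_{\bap}$ for $\bap \notin S$.
\end{pro*}
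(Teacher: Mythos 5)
Your proposal is correct and follows essentially the same route as the paper: (i) directly from the definition of $\beta$ as a sum of strictly positive terms over primes lying under $H$, and (ii) via the normal closure of a decomposition subgroup together with (C3) to force uniqueness of the prime lying under. The only difference is cosmetic: you take $H_i = H \cap \langle\langle D_{\baq_i}\rangle\rangle_U$ where the paper takes $H_i = \langle\langle D_{\baq_i}\cap H\rangle\rangle_U$; both are closed normal subgroups of $U$ contained in $H$ and in $\langle\langle D_{\baq_i}\rangle\rangle_U$, so the (C3) argument applies verbatim.
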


\begin{proof}
(i) follows directly from the definition of $\beta$. (ii):  Assume first $z_S(U;H) > 1$. Then there are two different primes $\fp_1,\fp_2$ of $L$, both lying outside $S$ and under $H$. For $i=1,2$ let $\bap_i$ be an extension of $\fp_i$ to $K_S$ and let $H_i := \langle\langle D_{\bap_i/\fp_i} \cap H \rangle\rangle_U$ be the closed normal subgroup of $U$ generated by $D_{\bap_i/\fp_i} \cap H$. Then obviously, $\fp_i$ lies under $H_i$, hence $z_S(U;H_i) > 0$. Further, for any prime $\fp \neq \fp_1$ of $L$ with some extension $\bap$ to $K_S$, we have:

\[ D_{\bap/\fp} \cap H_i \subseteq D_{\bap/\fp} \cap \langle\langle D_{\bap_1/\fp_1} \rangle\rangle_U \subseteq D_{\bap/\fp}, \]

\noindent and the second inclusion is not open by (C3). Hence (doing the same for $\fp_2$ instead of $\fp_1$) no prime lies under $H_1 \cap H_2$. In particular, $z_S(U; H_1 \cap H_2) = 0$.

Conversely, assume there are two closed subgroups $H_1,H_2 \subseteq H$, normal in $U$, such that $z_S(U;H_i) > 0$ for $i = 1,2$ and $z_S(U;H_1 \cap H_2) = 0$. Let $\fp_i$ be a prime of $L$ lying outside $S$ and under $H_i$. Then $\fp_i$ also lies under $H$. If we would have $\fp_1 = \fp_2 =: \fp$, then $D_{\bap/\fp} \cap H_i \subseteq D_{\bap/\fp}$ would be an open inclusion for $i = 1,2$. But then also $D_{\bap/\fp} \cap H_1 \cap H_2 \subseteq D_{\bap/\fp}$ would be open, i.e., $\fp$ would lie under $H_1 \cap H_2$, which contradicts $z_S(U; H_1 \cap H_2) = 0$. Thus $\fp_1 \neq \fp_2$, and hence $z_S(U; H) > 1$. 
\end{proof}


\subsection{Subgroups of decomposition behavior} \mbox{}

We now introduce candidates from decomposition subgroups. The definitions depend on which condition we pose on $K_S/K$.

\begin{Def}\label{Def:dec_behavior_C2}
Assume that (C2) holds for $K_S/K$. Let $Z \subseteq \Gal_{K,S}$ be a closed pro-cyclic subgroup with infinite pro-$p$-Sylow subgroup. We say that $Z$ has \textbf{(C2)-decomposition behavior} if there is a constant $C > 0$ such that
\[  \lim_{K_S^Z/L/K} \beta(K_S^{Z_L}/L) = C, \]
\noindent where $Z_L := \langle \langle Z \rangle\rangle_{\Gal_{L,S}}$.
\end{Def}

\begin{Def}\label{Def:dec_behavior_C3} Assume that (C3) holds for $K_S/K$. Let $Z \subseteq \Gal_{K,S}$ be a closed pro-cyclic subgroup with infinite pro-$p$-Sylow subgroup. We say that $Z$ has \textbf{(C3)-decomposition behavior} if the following conditions are satisfied for all open subgroups $U \subseteq \Gal_{K,S}$ containing $Z$:
\begin{itemize}
\item[(1)] $z_S(U; \langle\langle Z \rangle\rangle_U) = 1$
\item[(2)] there is a constant $C > 0$ and an open subgroup $Z \subset U_0 \subseteq \Gal_{K,S}$, such that if $Z \subset U \subseteq U_0$, then 
\[ \beta(K_S^{Z_L}/L) = C, \]
\noindent where $L := K_S^U$ and $Z_L := \langle\langle Z \rangle\rangle_U$. 
\end{itemize}
\end{Def}

\begin{rem}
From Lemma \ref{lm:C3_conn_to_C2} and the proof of Theorem \ref{thm:crit_2} one gets the impression that (C2) is the 'better' condition. Nevertheless, we do not know how to deduce the second part of (C2) from (C3) and therefore we work with both conditions separately.
\end{rem}

\begin{thm}\label{thm:crit_2}
Let $S \supseteq S_p \cup S_{\infty}$ be a finite set of primes of $K$. Assume that (C2) (resp. (C3)) holds for $K,S$. Assume that the generalized Riemannian hypothesis holds for all involved number fields. Let $Z \subseteq \Gal_{K,S}$ be a closed pro-cyclic subgroup with infinite pro-$p$-Sylow subgroup. Then $Z$ has (C2)- (resp. (C3)-) decomposition behavior if and only if there is a prime $\bap$ of $K_S$, such that $Z \cap D_{\bap} \subseteq D_{\bap}$ is open. In this case $Z \subseteq D_{\bap}$, the prime $\bap$ is unique and $\bap \not \in S(K_S)$. Moreover, the norm of the restriction $\fp$ of $\bap$ to $K$ is equal to 

\[ \N\fp = \frac{e^C}{e^C - 1}, \]

\noindent where $C > 0$ is the constant attached to $D_{\bap/\fp}$ in Definition \ref{Def:dec_behavior_C2} (resp. \ref{Def:dec_behavior_C3}).
\end{thm}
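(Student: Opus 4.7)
The plan is to prove the two implications $(\Leftarrow)$ and $(\Rightarrow)$ separately—each using the condition, (C2) or (C3), tailored to the corresponding definition of decomposition behavior—and then to read off the norm from the constant $C$. The common endgame for both directions is Lemma \ref{lm:FKS_lemma}(ii): once we have produced a prime $\bap$ of $K_S$ for which $Z \cap D_{\bap}$ is open in $D_{\bap}$, that lemma immediately furnishes the inclusion $Z \subseteq D_{\bap}$, the uniqueness of $\bap$, and the non-membership $\bap \notin S(K_S)$ (the $p$-Sylow of $D_{\bap}$ at a prime in $S$, being a non-abelian Demu\v{s}kin group, cannot contain an open abelian subgroup).

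For the direction $(\Leftarrow)$: assume $Z \subseteq D_{\bap}$ is open and set $f := (D_{\bap}:Z) = f_{\bap}(K_S^Z/K)$. In the (C2) case the first bullet of (C2) is literally the definition of (C2)-decomposition behavior with $C := \log(\N\fp^f/(\N\fp^f - 1))$. In the (C3) case, condition (1) of (C3)-behavior follows as in Lemma \ref{lm:C3_conn_to_C2}: the prime $\fp_L := \bap|_L$ lies under $\langle\langle Z\rangle\rangle_U$, while every other prime $\fq$ of $L$ has infinite degree in $K_S^{\langle\langle D_{\bap}\rangle\rangle_U}/L$ by (C3) and hence also in its sub-tower $K_S^{\langle\langle Z\rangle\rangle_U}/L$; the same computation gives $f_{\fp_L}(K_S^{Z_L}/L) = 1$ and $\N\fp_L = \N\fp^f$ as soon as $U \cap D_{\bap} = Z$, so $\beta(K_S^{Z_L}/L) = \log(\N\fp^f/(\N\fp^f - 1))$ is constant, yielding condition (2).

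For the direction $(\Rightarrow)$: the (C2) case is painless, for the second bullet of (C2) is exactly the contrapositive of what we want—absence of any $\bap$ with $Z \cap D_{\bap}$ open in $D_{\bap}$ would force $\lim_L \beta = 0$, contradicting $C > 0$. The (C3) case proceeds constructively: condition (1) of (C3)-behavior yields for each open $U \supseteq Z$ a unique prime $\fp_U$ of $L = K_S^U$ outside $S$ lying under $\langle\langle Z\rangle\rangle_U$; the family $\{\fp_U\}$ is compatible under restriction (if $U_1 \supseteq U_2$ then $\langle\langle Z\rangle\rangle_{U_1} \supseteq \langle\langle Z\rangle\rangle_{U_2}$, so $\fp_{U_2}|_{L_1}$ lies under $\langle\langle Z\rangle\rangle_{U_1}$ and uniqueness forces it to equal $\fp_{U_1}$), and the inverse limit defines a prime $\bap$ of $K_S$. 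To show $Z \cap D_{\bap}$ is open in $D_{\bap}$, I would argue by contradiction: if it had infinite index, then since $\bigcap_U (D_{\bap} \cap U) = D_{\bap} \cap Z$ the indices $f_{\bap}(L/K) = (D_{\bap}:D_{\bap} \cap U)$ would grow unboundedly as $U$ shrinks, so $\N\fp_L = \N\fp^{f_{\bap}(L/K)} \to \infty$; by the computation of Lemma \ref{lm:C3_conn_to_C2} the contribution of $\fp_L$ to $\beta(K_S^{Z_L}/L)$ would then tend to $0$, and combined with vanishing of the contributions from $S^f(L)$ this would force $\beta \to 0$, contradicting $\beta = C > 0$.

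The hard part will be precisely this last step—showing that no pathological contribution to $\beta(K_S^{Z_L}/L)$ can arise from primes in $S^f(L)$, whose splitting in $K_S^{Z_L}/L$ is not directly constrained by $Z$. I expect this to require Criterion 1 (Proposition \ref{prop:crit1}), which converts $\beta$-equalities into $z_S$-counts and is also where the generalized Riemann hypothesis enters (through its role in Proposition \ref{prop:normal_case_invariants} and Theorem \ref{thm:TVZ}). Once $Z \cap D_{\bap}$ is known to be open, the norm formula is a one-line inversion: the relation $C = \log(\N\fp^f/(\N\fp^f - 1))$ rearranges to $\N\fp^f = e^C/(e^C - 1)$, which specializes to the form stated in the theorem when $Z = D_{\bap}$.
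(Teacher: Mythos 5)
Your forward directions, the (C2) converse, the inverse-limit construction of $\bap$ from the compatible primes $\fp_L$, the appeal to Lemma \ref{lm:FKS_lemma}(ii) for $Z \subseteq D_{\bap}$, uniqueness and $\bap \notin S(K_S)$, and the inversion of $C = \log(\N\fp/(\N\fp-1))$ all match the paper's proof. The divergence, and the genuine gap, is the last step of the (C3) converse, namely showing that $Z \cap D_{\bap}$ is open in $D_{\bap}$. You argue asymptotically: if the intersection were not open, then $\N\fp_L \rar \infty$, the $\fp_L$-contribution to $\beta(K_S^{Z_L}/L)$ tends to $0$, and, \emph{provided} the contributions of the primes in $S^f(L)$ also vanish, $\beta \rar 0$, contradicting $C > 0$. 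You admit you cannot control the $S^f(L)$-contributions and hope that Proposition \ref{prop:crit1} together with GRH will supply this; but Criterion 1 only re-expresses the condition $z_S(U;H)=0$ as a $\beta$-identity from which the $S$-part has been subtracted off --- it says nothing about whether primes of $S$ lie under $Z_L$, so it cannot close your gap. (Indeed, the paper's proof of Theorem \ref{thm:crit_2} never uses Criterion 1.)

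The paper avoids any limiting argument at this point. It reads part (1) of Definition \ref{Def:dec_behavior_C3} as saying that $\fp_L$ is the \emph{only} prime of finite norm in $K_S^{Z_L}/L$, so that $\beta(K_S^{Z_L}/L)=\frac{1}{f_U}\log\frac{t^{f_U}}{t^{f_U}-1}$ with $t=\N\fp_L$, and then exploits the exact constancy in part (2) between two nested levels $Z \subseteq U' \subseteq U \subseteq U_0$: with $c:=(D_{\bap/\fp_L}:D_{\bap/\fp_{L'}})$, equating the two values of $\beta$ and clearing logarithms gives $t^{(c-1)f_U f_{U'}}(t^{f_U}-1)^{f_{U'}}=(t^{c f_{U'}}-1)^{f_U}$, and divisibility by the prime power $t$ forces $c=1$; hence $D_{\bap}\cap U$ is constant for $Z \subseteq U \subseteq U_0$ and $D_{\bap}\cap Z = D_{\bap}\cap U_0$ is open. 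Note that under the paper's reading of part (1) your asymptotic argument would also go through (and is arguably simpler), so the real problem is that you located the difficulty correctly but proposed the wrong tool: what is needed is either that reading of part (1) (no contribution from $S$-primes at all) or the paper's two-level integrality trick, not Criterion 1.
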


\begin{proof}
Assume first (C2) holds for $K,S$. By Lemma \ref{lm:FKS_lemma}, the following two statements about $Z$ are equivalent: 'there is a prime $\bap$ of $K_S$ such that $Z \cap D_{\bap} \subseteq D_{\bap}$ is open' and 'there is a prime $\bap$ of $K_S$ such that $Z \subseteq D_{\bap}$ is open'. Thus in this case, the first statement of the theorem follows directly from (C2).

Now assume (C3) holds for $K,S$. Assume first there is a prime $\bap$ of $K_S$ with restriction $\fp$ to $K$ such that $Z \cap D_{\bap/\fp} \subseteq D_{\bap/\fp}$ is open.  For any $Z \subseteq U \subseteq \Gal_{K,S}$ open with fixed field $L$, put $\fp_L := \bap|_L$ and write $Z_L := \langle \langle Z \rangle \rangle_U$. Then $\fp_L$ lies under $Z_L$ and not in $S(L)$. If a further prime $\fq \neq \fp_L$ of $L$ with $\fq \not\in S$ would lie under $Z_L$, then the composition $D_{\baq/\fq} \cap Z_L \subseteq D_{\baq/\fq} \cap \langle \langle D_{\bap/\fp_L} \rangle \rangle_U \subseteq D_{\baq/\fq}$ would be open. Hence also the second inclusion would be open, and this contradicts $(C3)$. This shows part (1) in Definition \ref{Def:dec_behavior_C3}. As in the proof of Lemma \ref{lm:C3_conn_to_C2}, we see that $\beta(K_S^{Z_L}/L)$ gets constant (and $> 0$) if $U$ is small enough, more precisely, if $U \cap D_{\bap/\fp} = Z$. This shows that $Z$ has (C3)-decomposition behavior. 

Conversely, assume $Z$ has (C3)-decomposition behavior. Let $Z \subseteq U \subseteq \Gal_{K,S}$ be an open subgroup with fixed field $L$. By assumptions, there is a unique prime $\fp_L$ of $L$ lying outside $S$ and under $Z_L$. This uniqueness implies that if $Z \subseteq U^{\prime} \subseteq U \subseteq \Gal_{K,S}^{\bullet}$ with fixed fields $L^{\prime}, L$, then $\fp_{L^{\prime}}|_L = \fp_L$. Thus the sets $\{\fp_L\}$ with one element form a projective system, which limit is again a one element set, i.e., we obtain a unique prime $\bap$ of $K_S$ (lying outside $S$) with $\bap|_L = \fp_L$ for each $L$. 

Let us compute the numbers $\beta(K_S^{Z_L}/L)$ for any open $Z \subseteq U \subseteq \Gal_{K,S}$. By part (1) of Definition \ref{Def:dec_behavior_C3}, the prime $\fp_L$ is the only one lying under $Z_L$, i.e.,  $\fp_L$ is the only prime having finite norm in $K_S^{Z_L}/L$. Set $f_U := f_{\fp_L}(K_S^{Z_L}/L)$ and $t := \N\fp_L$. By definition of $\beta$ we compute:

\[ \beta(K_S^{Z_L}/L) = \frac{1}{f_U} \log \frac{t^{f_U}}{t^{f_U} - 1}. \]

\noindent Fix now two open subgroups $Z \subseteq U^{\prime} \subseteq U \subseteq U_0$ with fixed fields $L^{\prime},L$, where $U_0$ is as in part (2) of Definition \ref{Def:dec_behavior_C3}. Set $c := [\fp_{L^{\prime}}: \fp_L] = (D_{\bap/\fp_L} : D_{\bap/\fp_{L^{\prime}}})$. Then $\N\fp_{L^{\prime}} = \N\fp^c = t^c$ and by part (2) of Definition \ref{Def:dec_behavior_C3} we must have:

\[ \frac{1}{f_{U^{\prime}}} \log \frac{t^{cf_{U^{\prime}}}}{t^{cf_{U^{\prime}}} - 1} = \beta(K_S^{Z_{L^{\prime}}}/L^{\prime}) = \beta(K_S^{Z_L}/L) =  \frac{1}{f_U} \log \frac{t^{f_U}}{t^{f_U} - 1}, \]

\noindent or, equivalently,

\[ (c - 1)\log t = \log \left( \frac{ (t^{c f_U^{\prime} }- 1)^{1/f_{U^{\prime}}}  }{  (t^{f_U} - 1)^{1/f_U}  }\right). \]

\noindent Applying $\exp$ and taking $f_U f_{U^{\prime}}$-th power we obtain
\[ t^{(c - 1)f_U f_{U^{\prime}}}(t^{f_U} - 1)^{f_{U^{\prime}}} = (t^{c f_{U^{\prime}}} - 1)^{f_U}. \]

\noindent All numbers in this equation are positive integers and if $c \neq 1$, then the left side would be divisible by $t$ (which is a prime power $> 1$), whereas the right side would not. This contradiction shows $c = 1$ and therefore, the intersection $D_{\bap/\fp} \cap U$ is independent of the open subgroup $Z \subseteq U \subseteq U_0$. Hence 
\[ D_{\bap/\fp} \cap Z = D_{\bap/\fp} \cap \bigcap_{Z \subseteq U} U = D_{\bap/\fp} \cap U_0 \]
\noindent is open in $D_{\bap/\fp_K}$. This finishes the proof of the first statement of the theorem.

Uniqueness of $\bap$ follows from Lemma \ref{lm:FKS_lemma} as (C2), (C3) imply (C1). The $p$-Sylow subgroup of a decomposition subgroup of a prime in $S$ is isomorphic to $\bZ_p \ltimes \bZ_p$, hence do not contain any open abelian subgroup. Thus $\bap \not\in S(K_S)$. Further, $Z \subseteq D_{\bap}$ follows from Lemma \ref{lm:FKS_lemma}. This shows the second statement of the theorem. The last statement follows directly from (C2) resp. is an elementary computation. \qedhere
\end{proof}

\begin{cor} \label{cor:determining_the_dec_groups}
With the assumptions as in Theorem \ref{thm:crit_2}, the decomposition subgroups inside $\Gal_{K,S}$ of primes lying outside $S$ are exactly the maximal pro-cyclic subgroups of $\Gal_{K,S}$ with infinite pro-$p$-Sylow subgroup of (C2)- (resp. (C3)-) decomposition behavior.
\end{cor}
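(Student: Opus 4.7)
The plan is to derive this corollary as a direct consequence of Theorem~\ref{thm:crit_2}, with essentially no new arguments needed beyond the uniqueness assertion there and condition (C1) (which is implied by both (C2) and (C3)).

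First I would check that each decomposition subgroup $D_{\bap}$ for $\bap \in |\Spec \caO_{K,S}| \sm S(K_S)$ is itself a candidate, i.e.\ pro-cyclic with infinite pro-$p$-Sylow and of the appropriate decomposition behavior. Since $S \supseteq S_p \cup S_\infty$, such a prime $\bap$ lies outside $S_p$ and is unramified, so $D_{\bap}$ is pro-cyclic, topologically generated by Frobenius, with pro-$p$-Sylow $\cong \bZ_p$. Taking $Z := D_{\bap}$, the inclusion $Z \cap D_{\bap} = D_{\bap} \subseteq D_{\bap}$ is trivially open, so the ``if'' direction of Theorem~\ref{thm:crit_2} applies and $D_{\bap}$ has (C2)- (resp.\ (C3)-) decomposition behavior.

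Next I would establish maximality. Suppose $D_{\bap} \subseteq Z$ with $Z$ pro-cyclic, infinite pro-$p$-Sylow, and of decomposition behavior. By Theorem~\ref{thm:crit_2} there exists a unique prime $\baq \notin S(K_S)$ with $Z \subseteq D_{\baq}$, and hence $D_{\bap} \subseteq D_{\baq}$. Taking pro-$p$-Sylow subgroups yields $D_{\bap,p} \subseteq D_{\baq,p}$; both are copies of $\bZ_p$, so this inclusion is open in $D_{\bap,p}$. Condition (C1) (which holds under either (C2) or (C3), cf.\ Lemma~\ref{lm:C3_conn_to_C2}) then forces $\bap = \baq$, so $Z \subseteq D_{\bap}$ and thus $Z = D_{\bap}$.

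For the converse, if $Z$ is any maximal pro-cyclic subgroup with infinite pro-$p$-Sylow having decomposition behavior, the ``only if'' direction of Theorem~\ref{thm:crit_2} produces a unique prime $\bap \notin S(K_S)$ with $Z \subseteq D_{\bap}$. Since $D_{\bap}$ is itself such a subgroup (by the first paragraph) and contains $Z$, maximality gives $Z = D_{\bap}$. There is no real obstacle here: Theorem~\ref{thm:crit_2} does all the work, and the only subtlety is the routine verification above that $D_{\bap}$ meets the hypotheses — which is immediate from $\bap \notin S \supseteq S_p \cup S_\infty$.
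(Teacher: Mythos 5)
Your proposal is correct and takes essentially the same route as the paper: one checks that each $D_{\bap}$ with $\bap \not\in S(K_S)$ is pro-cyclic with infinite pro-$p$-Sylow subgroup, and then Theorem \ref{thm:crit_2} together with (C1) (equivalently, Lemma \ref{lm:FKS_lemma}) yields maximality and the converse. The only step you assert without justification is that the pro-$p$-Sylow subgroup of $D_{\bap}$ is infinite; the paper deduces this from the fact that the cyclotomic $\bZ_p$-extension of $K$ lies in $K_S$ (as $S_p \subseteq S$) and no finite prime splits completely in it.
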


\begin{proof}
The $\bZ_p$-cyclotomic extension of $K$ is contained in $K_S$. Thus the decomposition subgroup inside $\Gal_{K,S}$ of any prime $\fp \not\in S$ is pro-cyclic with infinite pro-$p$-Sylow subgroup. Now the corollary follows from Theorem \ref{thm:crit_2} and Lemma \ref{lm:FKS_lemma}.
\end{proof}


\section{Anabelian geometry} \label{sec:anab_geom}

The goal of this section is to apply the theory developed in Sections \ref{sec:some_invs_of_infinite_towers}, \ref{sec:char_of_dec_groups} to anabelian geometry and to prove Theorem \ref{thm:main_thm}. Let $K_S^{\tame}$ denote the maximal tamely ramified subextension of $K_S/K$. First we need a further condition:

\begin{itemize}
\item[(C4)] there is a finite subextension $K_S/M/K$, such that for each prime $\fp \not\in S(M)$, there is no $M_S^{\tame, \{\fp\}}/L/M$ finite, such that $M_S^{\{\fp\},\tame}/L$ is unramified (in particular, the extension $M_S^{\{\fp\},\tame}/M$ is infinite) and for any closed normal subgroup $H$ of $\Gal_{M_S/M}$, which is generated by conjugates of one element, $\bigcup_{M_S^H/L/M} L_S^{\tame}$ has no prime of finite norm.
\end{itemize}

\begin{rems} Condition (C4) is in contrast to (C2),(C3) purely of technical nature. We are convinced that it can be removed or at least weakened. Here are two remarks concerning it:
\begin{itemize}
\item[(i)] If $M$ satisfies the requirements posed in (C4), then also any finite $K_S/M^{\prime}/M$ does. If there would be some finite subextension $M$ such that $M_S^{\tame}$ has no prime of finite norm, then everything would be much easier. But, we need not to assume this and indeed, the last condition required in (C4) is much weaker: if $H$ is a normal subgroup of $\Gal_{M_S/M}$, generated by conjugates of one element, then the extension $M_S^H/M$ is very big at least if $M$ is totally imaginary (indeed, $M_S^H/M$ contains at least $r_2(M)$ independent $\bZ_p$-extensions of $M$).
\item[(ii)] Technically, we need (C4) for the crucial application of Proposition \ref{prop:elegant_comp_for_almost_tame_towers}, which we are finally forced to apply to extensions $\cL_n = L_S^{\tame}$ (constant for all $n$), as there are no other towers with reasonable properties. It would be much nicer, if one would be able to apply Proposition \ref{prop:elegant_comp_for_almost_tame_towers} to, say, $\cL_n^{\prime} := L(\mu_{p^n})_S^{\tame}$: then the last part of (C4) gets obsolete. Unfortunately, we are not able to obtain full control over $\mu_{\rel}(\cL_n^{\prime}/L)$, as wild ramification is in the game (although this wild ramification 'just' comes from the cyclotomic $\bZ_p$-tower).
\end{itemize}
\end{rems}

\begin{proof}[Proof of Theorem \ref{thm:main_thm}]
For each $K_{1,S_1}/L_1/K_1$, let $L_2$ be the field corresponding to $L_1$ via $\sigma$. By assumption (1) in Theorem \ref{thm:main_thm} we have a local correspondence at the boundary, i.e., there is a $\Gal_{K_1,S_1}$-invariant map

\[ \sigma_{\ast} \colon S_{1,f}(K_{1,S_1}) \stackrel{\sim}{\rar} S_{2,f}(K_{2,S_2}), \]

\noindent characterized by $\sigma(D_{\bap}) = D_{\sigma_{\ast}(\bap)}$, which induces bijections $\sigma_{\ast,L_1}$ at each finite level $K_{1,S_1}/L_1/K_1$. Moreover, the decomposition subgroups of primes in $S_i$ are full local groups, hence $\sigma_{\ast,L_1}$ preserves residue characteristics, the absolute degrees of primes, the inertia degrees and the ramification indices.

Further, if $I_{\bap}$ denote the inertia and $R_{\bap}$ the wild inertia subgroup of $D_{\bap}$, then $\sigma(I_{\bap}) = I_{\sigma_{\ast}(\bap)}$ and $\sigma(R_{\bap}) = R_{\sigma_{\ast}(\bap)}$. In particular, if $L_1, L_2$ correspond via $\sigma$, then also $L_{1,S_1}^{\tame}$, $L_{2,S_2}^{\tame}$ do, and we have $g_{M_1/L_1} = g_{M_2/L_2}$ if $M_1/L_1$ is tame. Moreover, this implies, that if $K_{1,S_1}/\cL_1/L_1/K_1$ is a tamely ramified subextension, then 

\begin{equation}\label{eq:tametower_equal_mu_rels}
\mu_{\rel}(\cL_1/L_1) = \mu_{\rel}(\cL_2/L_2). 
\end{equation}

Due to Lemma \ref{lm:FKS_lemma} it is enough to show the existence of $\sigma_{\ast}$ after a finite extension of $K$ inside $K_S$. Thus by \cite{Iv} Theorem 1.1, we can assume that $\mu_p \subseteq K$. Moreover, by condition (C4), we can assume that for any finite subextension $K_S/M/K$ and any $\fp \in \Sigma_M^f$, the extension $M_S^{\{\fp\},\tame}/M$ is infinite, there is no $M_S^{\tame, \{\fp\}}/L/M$ finite, such that $M_S^{\{\fp\},\tame}/L$ is unramified and that $\bigcup_{K_S^H/L/K} L_S^{\tame}$ has no prime of finite norm for any $H$ as in (C4). 

Due to Corollary \ref{cor:determining_the_dec_groups}, to show the existence of $\sigma_{\ast}$, it is enough to show that for any pro-cyclic subgroup $Z_1 \subseteq \Gal_{K_1,S_1}$ with infinite pro-$p$-Sylow subgroup, the following holds: if $Z_1$ has (C2)- (resp. (C3)-) decomposition behavior, then $Z_2 := \sigma(Z_1)$ also has. Due to (C4), we can restrict attention only to such subgroups $Z_1$, for which $K_{1,S_1}^{\langle\langle Z_1 \rangle\rangle_{\Gal_{K_1,S_1}}, \tame}/K_1$ is infinitely ramified (and hence, in particular, infinite). Note that then the same is true for $Z_2$ instead of $Z_1$. By definition of decomposition behavior, it is enough to show that for all $K_{1,S_1}^{Z_1}/L_1/K_1$ finite, one has 

\[ \beta(L_{1,S_1}^{ Z_{1,L_1} }/L_1) = \beta(L_{2,S_2}^{ Z_{2,L_2} }/L_2), \] 

\noindent where $Z_{i,L_i}$ is the normal subgroup of $\Gal_{L_1,S_1}$ generated by $Z_i$. By the last statement of Proposition \ref{prop:elegant_comp_for_almost_tame_towers} (applied to $L := M_i$ and $\cL_{\infty} := \cL_n := M_{i,S_i}^{\tame}$), we know that (we omit the index $i = 1,2$):

\begin{equation} \label{eq:betas_which_should_coincide}
\beta(L_S^{ Z_L }/L) = \lim_{L_S^{Z_L}/M/L} \frac{1}{[M:L]} (\beta(M_S^{\tame} \cap L^{Z_L}/M ) - \beta(M_S^{\tame}/M) ) 
\end{equation}

\noindent Note that the assumptions of Proposition \ref{prop:elegant_comp_for_almost_tame_towers} are satisfied. Indeed,

\begin{itemize}
 \item[(i)] we know that $K_S^{Z_K} \cap K_S^{\tame}/K$ is infinitely ramified, and $L_S^{Z_L}  \cap M_S^{\tame} \supseteq K_S^{Z_K} \cap K_S^{\tame}$. Hence also $M_S^{\tame} \cap L^{Z_L}/M$ is infinite and ramified, i.e., $\mu_{\rel}(M_S^{\tame}/M) \leq \mu_{\rel}(M_S^{\tame} \cap L^{Z_L}/M) < \infty$,
 \item[(ii)] on the other side $0 < \mu_{\rel}(M_S^{\tame}/M) \leq \mu_{\rel}(M_S^{\tame} \cap L^{Z_L}/M)$ by Lemma \ref{lm:mu_is_strictly_pos_in_tame_towers}.
 \item[(iii)] $\bigcup_{L_S^{Z_L}/M/L} M_S^{\tame} \supseteq \bigcup_{K_S^{Z_K}/M/K} M_S^{\tame}$ has no prime of finite norm by (C4).
\end{itemize}

\noindent Finally, by the first statement of Proposition \ref{prop:elegant_comp_for_almost_tame_towers}, the right side of \eqref{eq:betas_which_should_coincide} can be expressed in terms of $\lambda_{\rel}$ and $\mu_{\rel}$ of the tamely ramified towers $M_S^{\tame} \cap L^{Z_L}/M$ and $M_S^{\tame}/M$. Hence the right sides of \eqref{eq:betas_which_should_coincide} coincide for $i=1,2$. Thus $Z_1$ has decomposition behavior if and only if $Z_2$ has and we are done. This all shows the existence of the compatible bijections $\sigma_{\ast,L_1}$. Moreover, it is clear from Theorem \ref{thm:crit_2} and preceeding computations, that the maps $\sigma_{\ast,L_1}$ preserve norms of primes.
\end{proof}

\section{The $p$-adic volume of the unit lattice}\label{sec:padic_regulator}

The results of this section are independent from the rest of the article. Let $p$ be an odd prime, $K$ a number field, $S \supseteq S_{\infty} \cup S_p$ a finite set of primes of $K$. In contrast to the usual regulator, it is possible to reconstruct the $p$-adic volume $\vol_p(K)$ of the unit lattice of $K$ from the fundamental group $\Gal_{K,S}$ (+ some more information). For a definition of $\vol_p(K)$ we refer to \cite{NSW} 10.3.3. 

\begin{prop} Let $K$ be a number field, $S \supseteq S_{\infty}$ a finite set of primes of $K$, such that at least two rational primes lie in $\caO_{K,S}^{\ast}$ and let $p$ be one of them. Assume $p > 2$ or $K$ totally imaginary. Assume the Leopoldt conjecture holds for $K$ and $p$. Then 
\[ (\Gal_{K,S},p,\chi_p) \rightsquigarrow \vol_p(K). \]
\end{prop}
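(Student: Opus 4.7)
The plan is to express $\vol_p(K)$ as the $p$-adic covolume of an explicit embedding of $\bZ_p$-lattices which is intrinsic in $(\Gal_{K,S},p,\chi_p)$. Since $p \in \caO_{K,S}^{\ast}$ forces $S_p \subseteq S$, and $\chi_p$ is part of the data, Corollary~\ref{cor:intro:loccor_on_bdr_diff_conds}(iii) lets me first recover the decomposition subgroups $D_\fp \subseteq \Gal_{K,S}$ for every $\fp \in S^f$, together with their inertia subgroups $I_\fp$ and wild parts (these are internal to $D_\fp$). For $\fp \in S_p$, local class field theory identifies $I_\fp^{\ab}(p)$ canonically with $U_{\fp}^{(1)} \otimes \bZ_p$; the action of $D_\fp/I_\fp$ on it combined with the restriction of $\chi_p$ to $D_\fp$ determines the $\bZ_p$-rank $[K_\fp:\bQ_p]$ and its natural lattice structure (under our hypothesis $p>2$ or $K$ totally imaginary, the $p$-adic logarithm is injective on $U_{\fp}^{(1)}$ and its image is controlled).

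Next, from $(\Gal_{K,S},p)$ I extract $A := \Gal_{K,S}^{\ab}(p)$, and feed the local data into the standard global class field theory exact sequence (cf.\ NSW Ch.~X)
\[
0 \longrightarrow \caO_{K,S}^{\ast} \otimes \bZ_p \longrightarrow \bigoplus_{\fp \in S_p} U_{\fp}^{(1)} \otimes \bZ_p \longrightarrow A \longrightarrow \Cl_S(K)(p) \longrightarrow 0,
\]
in which the first arrow is the diagonal embedding, injective by Leopoldt. Both the middle term and $A$ are intrinsic, so the middle map is intrinsic, and its kernel is exactly the image of $\caO_{K,S}^{\ast} \otimes \bZ_p$. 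The covolume of this kernel in $\bigoplus_{\fp \in S_p} U_{\fp}^{(1)} \otimes \bZ_p$ can then be read off via a standard torsion computation from $\sharp \Cl_S(K)(p)$ and the $\bZ_p$-torsion of $A$, corrected by $\sharp \mu_{p^\infty}(K)$. By the definition in NSW~10.3.3, after accounting for the transition between $S$-units and $S_p$-units via the elementary Euler factors $\prod_{\fp \in S \sm S_p}(1 - \N\fp^{-1})$ (intrinsic because the reconstructed decomposition subgroups at the boundary preserve absolute norms), this covolume equals $\vol_p(K)$ up to an explicit rational factor extracted from the data.

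The main obstacle is the normalization in the last step. Recovering the covolume as an abstract $\bZ_p$-module index only determines $\vol_p(K)$ up to a unit in $\bZ_p^{\ast}$; to pin it down as a genuine $p$-adic number one must verify that the identifications $I_\fp^{\ab}(p) \cong U_{\fp}^{(1)} \otimes \bZ_p$ provided by $\chi_p$ are compatible with the higher-unit filtration used to define the $p$-adic logarithm, uniformly in $\fp \in S_p$. This is a standard but delicate matter in local Iwasawa theory (cf.\ Coates--Wintenberger) that requires careful bookkeeping of normalizations on each local factor before the global covolume can be asserted to equal $\vol_p(K)$ on the nose.
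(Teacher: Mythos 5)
Your overall strategy --- locate the boundary decomposition groups, use class field theory to pin down the closure of the global units inside the local units at $p$, and then read off $\vol_p(K)$ --- is the same as the paper's, but the execution has a genuine gap at its central step. The exact sequence you write is not correct: the first arrow does not even exist, since an $S$-unit need not be a unit at $\fp \in S_p$ (for instance $p \in \caO_{K,S}^{\ast}$ has positive valuation at every $\fp \mid p$), and with $A = \Gal_{K,S}^{\ab}(p)$ the kernel of $\bigoplus_{\fp \in S_p} U_{\fp}^{(p)} \rar A$ is not the closure of (an image of) $\caO_{K,S}^{\ast} \otimes \bZ_p$, nor is the cokernel of that map $\Cl_S(K)(p)$: the primes of $S \sm S_p$ contribute Frobenius and tame inertia to $A$, and the quotient of $A$ by the image of the units at $p$ is (essentially) the Galois group of the maximal abelian pro-$p$ extension unramified outside $S \sm S_p$, not the $S$-class group. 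This is precisely where $\chi_p$ has to do real work: in the paper one first identifies, inside the pro-$p$ quotient $\Gal_{K,S}^{(p)}$, the inertia subgroups at $\bap \in S^f \sm S_p$ as the kernels of the map to $\bZ_p$ induced by $\chi_p$ on the already-located decomposition groups, divides out the normal subgroup they generate so as to obtain $\Gal_{K,S_p}^{(p)}$ together with its decomposition groups at $S_p$, and only then applies the class field theory sequence (\cite{NSW} 8.3.21) for the set $S_p$, whose middle term is $\prod_{\fp \in S_p} K_{\fp}^{\ast,(p)} \cong \prod D_{\bap}^{\ab,(p)}$ (full local groups by \cite{CC}). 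The passage from $S_p$-units to honest units is then a diagram chase against \cite{NSW} 10.3.13, giving $\overline{\caO_K^{\ast}} = \overline{\caO_{K,S_p}^{\ast}} \cap \prod_{\fp \in S_p} U_{\fp}^{(p)}$, the subgroups $U_{\fp}^{(p)}$ being intrinsic as images of inertia under local reciprocity; no Euler-factor correction of the kind you invoke occurs, and your proposal gives no argument for one.

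Your closing ``obstacle'' is, by contrast, not the issue. $\vol_p(K)$ in the sense of \cite{NSW} 10.3.3 is in effect a $p$-adic norm (for totally real $K$ it is the $p$-adic absolute value of the $p$-adic regulator), so determining it up to a unit in $\bZ_p^{\ast}$ is determining it; once the embedding $\overline{\caO_K^{\ast}} \har \prod_{\fp \in S_p} U_{\fp}^{(p)}$ is reconstructed, one applies $- \otimes_{\bZ_p} \bQ_p/\bZ_p$ and quotes \cite{NSW} 10.3.8, using that $r_1(K), r_2(K)$ are recoverable and that Leopoldt gives $\caO_K^{\ast} \otimes_{\bZ} \bZ_p \stackrel{\sim}{\rar} \overline{\caO_K^{\ast}}$; no bookkeeping of $p$-adic logarithms on the local factors is needed. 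So to repair your argument: cut down from $S$ to $S_p$ using $\chi_p$ before invoking class field theory, recover $\overline{\caO_K^{\ast}}$ by the intersection above rather than via $\Cl_S(K)(p)$, the torsion of $A$ and Euler factors, and replace the normalization discussion by the citation of \cite{NSW} 10.3.8.
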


\begin{proof}
By \cite{Iv} Theorem 1.1, the given information is enough to reconstruct the position of decomposition subgroups of primes in $S^f$ inside $\Gal_{K,S}$ and hence also in its pro-$p$-quotient $\Gal_{K,S}^{(p)}$. For a prime $\bap \in (S^f \sm S_p)(K_S^{(p)})$, we have a map with open image $D_{\bap,K_S^{(p)}/K} \rar \bZ_p$, which is induced by the cyclotomic character $\chi_p^{\prime} \colon \Gal_{K,S}^{(p)} \rar \bZ_p$. The inertia subgroups $I_{\bap,K_S^{(p)}/K} \subseteq D_{\bap,K_S^{(p)}/K}$ are the kernels of this map. Therefore we can reconstruct the quotient $\Gal_{K,S_p}^{(p)}$ of $\Gal_{K,S}^{(p)}$ together with the decomposition subgroups at $S_p$ by dividing out the normal subgroup generated by $I_{\bap,K_S^{(p)}/K}$ for $\bap \in S^f \sm S_p$. Hence we can also reconstruct the following exact sequence from class field theory (cf. \cite{NSW} 8.3.21):

\[ 0 \rar \overline{\caO_{K,S_p}^{\ast}} \rar \prod_{\fp \in S_p} K_{\fp}^{\ast,(p)} \rar \Gal_{K,S_p}^{\ab,(p)} \rar \Cl_{S_p}(K)^{(p)} \rar 0 \]

\noindent where the upper index $(p)$ denotes the pro-$p$ completion and $\overline{\caO_{K,S_p}^{\ast}}$ denotes the closure of the image of $\caO_{K,S_p}^{\ast}$ in $\prod_{\fp \in S_p} K_{\fp}^{\ast,(p)}$. Note that the decomposition groups $D_{\bap,K_S/K}$ are the full local groups by \cite{CC} and hence by class fields theory $K_{\fp}^{\ast,(p)} \cong \Gal_{K_{\fp}}^{\ab,(p)} \cong D_{\bap,K_S/K}^{\ab,(p)}$. Let $U_{\fp} \subset K_{\fp}^{\ast}$ be the units of $\caO_{K_{\fp}}$. The Leopoldt conjecture and \cite{NSW} 10.3.13 with $S = S_p \cup S_{\infty}$, $T = \emptyset$ shows the exactness of the rows of the diagram

\[
\begin{xy} 
\xymatrix{ 
0 \ar[r] & \overline{\caO_K^{\ast}} \ar[r] \ar@{^{(}->}[d] & \prod_{\fp \in S_p} U_{\fp}^{(p)} \ar@{^{(}->}[d] \ar[r] & \Gal_{K,S_p}^{\ab,(p)} \ar@{=}[d] \ar[r] & \Cl_K^{(p)} \ar@{->>}[d] \ar[r] & 0 \\
0 \ar[r] & \overline{\caO_{K,S_p}^{\ast}} \ar@{^{(}->}[r] & \prod_{\fp \in S_p} K_{\fp}^{\ast,(p)} \ar[r] & \Gal_{K,S_p}^{\ab,(p)} \ar[r] & \Cl_{K,S_p}^{(p)} \ar[r] & 0 \\
}
\end{xy}
\]

\noindent where $\overline{\caO_K^{\ast}}$ is the closure of the image of $\caO_K^{\ast}$ inside $\prod_{\fp \in S_p} U_{\fp}^{(p)}$. We can reconstruct the second vertical map from the given data ($U_{\fp}^{(p)}$ correspond to the inertia subgroup via the reciprocity isomorphism $K_{\fp}^{\ast,(p)} \rar \Gal_{K_{\fp}}^{\ab,(p)}$). An easy diagram chase shows $\overline{\caO_K^{\ast}} = \overline{\caO_{K,S_p}^{\ast}} \cap \prod_{\fp \in S_p} U_{\fp}^{(p)}$. Hence we can reconstruct the upper left horizontal map in the above diagram. Apply $- \otimes_{\bZ_p} \bQ_p/\bZ_p$ to it; now the proposition follows from \cite{NSW} 10.3.8 as $r_1(K),r_2(K)$ can be reconstructed from the given data and as the natural map $\caO_K^{\ast} \otimes_{\bZ} \bZ_p \tar \overline{\caO_K^{\ast}}$ is an isomorphism by Leopoldt's conjecture.
\end{proof}

\subsection*{Acknowledgments}
First of all I want to mention, that the idea to search for a group theoretic criterion for small norm of primes in number fields was given to me originally by Jakob Stix during writing of my Ph.D. thesis. I want to thank Jakob Stix, Florian Pop, Jochen G\"artner and Johannes Schmidt for fruitful discussions on this subject, from which I learned a lot. Last but not least, I want to thank Malte Witte for helping me with some questions concerning Iwasawa theory. The author wants to thank the Technical Univeristy Munich, where this work was done.


\renewcommand{\refname}{References}

\end{document}